\documentclass[11pt,reqno,a4paper]{amsart}
\addtolength{\oddsidemargin}{-.775in}
	\addtolength{\evensidemargin}{-.775in}
	\addtolength{\textwidth}{1.55in}
	\usepackage{amsmath,amssymb,amsthm,framed,cancel,tikz,hyperref,enumitem}

\newtheorem{theorem}{Theorem}[section]
\newtheorem{lemma}[theorem]{Lemma}

\newtheorem{proposition}[theorem]{Proposition}

\theoremstyle{definition}
\newtheorem{definition}[theorem]{Definition}

\theoremstyle{remark} 
\theoremstyle{remark} 
\theoremstyle{remark}

\theoremstyle{remark}
\newtheorem{remark}[theorem]{Remark}


\numberwithin{equation}{section}



\begin{document}
\title[Few Bilinear Operators on Spaces of Continuous Functions]{Few Bilinear Operators on Spaces of Continuous Functions}

\author{Leandro Candido}
\address{Universidade Federal de S\~ao Paulo - UNIFESP. Instituto de Ci\^encia e Tecnologia. Departamento de Matem\'atica. S\~ao Jos\'e dos Campos - SP, Brasil}
\email{\texttt{leandro.candido@unifesp.br}}
\thanks{ The author was supported by Funda\c c\~ao de Amparo \`a Pesquisa do Estado de S\~ao Paulo - FAPESP No. 2023/12916-1 }

\subjclass{Primary 46E15, 47L20, 54G12; Secondary 46B25, 03E65, 54B10}


\keywords{Banach spaces of continuous functions, Bilinear operators, Few operators phenomenon}

\begin{abstract}
Motivated by recent work exhibiting a locally compact scattered space $L$ constructed under Ostaszewski's $\clubsuit$-principle, which yielded a complete classification of linear operators on $C_0(L\times L)$, we extend the analysis to the bilinear setting. We show that, for this space $L$, every bilinear operator $G:C_0(L)\times C_0(L)\to C_0(L)$ admits a unique decomposition into the sum of trivially predictable components. This establishes a bilinear analogue of the \emph{few operators} phenomenon.
\end{abstract}

\maketitle

\section{Introduction}

A problem that has inspired numerous research directions over the past decades concerns the possible linear operators on a Banach space; see, for instance, \cite{Argyros2011}, \cite{GowersMaurey}, \cite{Lindenstrauss}, \cite{Mor}, \cite{Shelah}, among many others. This problem is particularly intriguing in the context of Banach spaces of continuous functions on a compact Hausdorff space $C(K)$; see \cite{Koz2}, \cite{Koz3}, \cite{Pleb}, and even in the case where $K$ is scattered; see \cite{Koz1}, \cite{Koz4}, and \cite{Koz5}.  

The inspiration for this work comes from the recent construction presented in \cite{CandidoC(KxK)}, under the assumption of Ostaszewski’s $\clubsuit$-principle \cite{Osta}, which produced a locally compact Hausdorff space $L$ allowing the classification of all operators on $C_0(L\times L)$; see \cite[Theorem~1.1]{CandidoC(KxK)}. The underlying intuition is that the exotic properties of this space $L$ also lead to the failure of continuity for bilinear operators of the form  
\[
G:C_0(L)\times C_0(L)\to C_0(L).
\]  
Indeed, for each fixed function $f\in C_0(L)$, the formula $G_f(g)(t)=G(f,g)(t)$ defines a linear operator which, due to the properties of $L$, takes the simplest possible form. This naturally motivates the investigation of how the exotic topological characteristics of $L$ influence the continuity of bilinear mappings of the above type.  

The first step in this investigation is to identify, among the possible bilinear operators on $C_0(L)$, beyond the obvious ones with separable range, the most elementary types. In our setting, two such types appear. The first and most basic is what we refer to as \emph{scaled multiplication}, given by  
\[
M(f,g)(t) = p\, f(t)g(t),
\]  
where $p$ is a fixed constant.  

The second arises by fixing $(a_w)_{w\in L}, (b_w)_{w\in L}\in \ell_1(L)$ and defining what we call \emph{cross-interaction}, given by  
\[
H(f,g)(t)=\sum_{w\in L}\left(a_w f(t)g(w)+b_w f(w)g(t)\right).
\]  

In this paper we show that every bilinear operator of the form $G:C_0(L)\times C_0(L)\to C_0(L)$ can be decomposed into the sum of three components: a scaled multiplication component, a cross-interaction component, and a separable-range component. Since operators of these types exist regardless of the topology of $L$, this establishes a notion of \emph{few operators} in the bilinear setting. More precisely, the main result of this work is the following:

\begin{theorem}\label{Thm:Main}
Assuming Ostaszewski's principle $\clubsuit$, there exists a non-metrizable locally compact scattered Hausdorff space $L$ such that, for every bilinear operator $G:C_0(L)\times C_0(L)\to C_0(L)$, there exist a unique $r\in \mathbb{R}$, unique 
$(a_w)_{w\in L}, (b_w)_{w\in L} \in \ell_1(L)$, and a unique bilinear operator with separable range $S:C_0(L)\times C_0(L)\to C_0(L)$ such that 
\[
G(f,g)(t) = r\, f(t)g(t) 
           +\sum_{w\in L}(a_w  f(t)g(w)+ b_w f(w)g(t)) 
           + S(f,g)(t),
\]
for all $f,g \in C_0(L)$ and $t\in L$.
\end{theorem}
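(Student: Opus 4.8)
The plan is to study $G$ through its two linear sections and to peel off the explicit components one at a time, invoking at each stage the rigidity that the construction of $L$ imparts to $C_0(L)$. I will use throughout two structural facts about this space. First, since $L$ is scattered and locally compact, every measure on $L$ is purely atomic, so $C_0(L)^\ast=\ell_1(L)$ and each section of $G$ is represented by a weak$^\ast$-continuous family of atomic measures. Second, multiplication by any $h\in C_0(L)$ has separable range---a feature of $L$ inherited from its scattered, $\clubsuit$-generated structure---whereas the identity $I$ does not, because $L$ is non-metrizable and hence $C_0(L)$ is non-separable. This dichotomy is what ultimately forces uniqueness.

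\emph{Step 1: extracting the cross-interaction.} For fixed $f$ the section $G_f(g)=G(f,g)$ is a bounded linear operator, so by the few-operators property it takes the simplest form $G_f=\lambda(f)\,I+R_f$ with $\lambda(f)\in\mathbb{R}$ and $R_f$ of separable range. The scalar $\lambda(f)$ is uniquely determined, as $I$ has non-separable range, so $f\mapsto\lambda(f)$ is a bounded linear functional; identifying $C_0(L)^\ast=\ell_1(L)$ yields $(b_w)_{w\in L}\in\ell_1(L)$ with $\lambda(f)=\sum_w b_w f(w)$. The same reasoning applied to the sections $G(\cdot,g)$ produces $(a_w)_{w\in L}\in\ell_1(L)$. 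Setting $H(f,g)(t)=\sum_w a_w f(t)g(w)+\sum_w b_w f(w)g(t)$ and $\widetilde{G}=G-H$, a direct computation gives, for each fixed $f$, that $\widetilde{G}_f$ equals $R_f$ minus a rank-one map and hence has separable range, and symmetrically for each fixed $g$.

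\emph{Step 2: extracting the scaled multiplication.} Writing $\widetilde{G}(f,g)(t)=\sum_w \nu^f_t(\{w\})\,g(w)$ for the representing atomic measures, the decisive quantity is the diagonal coefficient $d(f,t)=\nu^f_t(\{t\})$. The multiplication term $r\,f(t)g(t)$ is precisely the non-separable residue carried by this diagonal, and the goal is to show $d(f,t)=r\,f(t)$ for a single constant $r\in\mathbb{R}$ independent of $t$. Here the $\clubsuit$-construction must be used in earnest: I would argue, by a pressing-down argument along the $\clubsuit$-sequence used to build $L$ (as in the linear classification of \cite{CandidoC(KxK)}), that on a large set of points $t$ the functional $f\mapsto d(f,t)$ is supported at $t$ alone with a common weight $r$, any deviation manufacturing an operator that violates the few-operators property; the points where this fails contribute only to a separable-range part. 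Setting $M(f,g)(t)=r\,f(t)g(t)$ then isolates the scaled multiplication.

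\emph{Step 3: the separable remainder and uniqueness.} It remains to show that $S:=\widetilde{G}-M$ has separable range and that the decomposition is unique. Uniqueness is the easy half: the scalar section coefficients recover $(a_w)$ and $(b_w)$, the diagonal coefficient recovers $r$, and $S$ is then forced, since no nonzero combination of scaled multiplication and cross-interaction has separable range, again because $I$ and multiplication by a nonzero element of $C_0(L)$ have non-separable range. I expect the main obstacle, and the heart of the paper, to be the separability of the range of $S$: Steps 1 and 2 only control $S$ in each variable separately, and upgrading this to joint control---showing that the off-diagonal residual masses $\nu^f_t(\{w\})$, $w\neq t$, are confined to one fixed separable subset of $L$ uniformly in $f$, $g$, and $t$---is exactly the step that cannot be obtained from soft linear theory and requires the full combinatorial strength of the $\clubsuit$-generated space $L$.
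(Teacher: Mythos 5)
Your outline has the right shape, and two pieces of it are genuinely sound: the extraction of the cross-interaction term in Step~1 (using the linear few-operators form $G_f=\lambda(f)I+R_f$ of each section, the closedness of the separable-range operators to see that $\lambda$ is a bounded functional, and $C_0(L)^*=\ell_1(L)$) is a clean route that differs from, and in this fragment is arguably slicker than, the paper's Theorem~\ref{Thm:Vanish-B}; and your uniqueness argument is essentially the paper's. But the two steps you yourself flag as the heart of the matter are not proved, and they are not fillable by the soft devices you invoke. In Step~2 the assertion that $f\mapsto d(f,t)$ is ``supported at $t$ alone with a common weight $r$'' on a large set of $t$, with the exceptional points contributing only a separable part, is precisely the conjunction of Theorems~\ref{Thm:Vanish-B}, \ref{Thm:Vanish-C} and \ref{Thm:Vanish-D}; a ``pressing-down argument along the $\clubsuit$-sequence'' is a gesture, not an argument, and it cannot be run ``as in the linear classification'' because the relevant object is no longer a single Radon measure per point but an $F_2$-measure $\nu_t$ on $K\times K$, whose control requires the Fr\'echet-variation regularity of Theorem~\ref{Thm:Regularity}, the disjoint finite-set separation of Lemma~\ref{Lem:Separation}, and the collapsing property of Definition~\ref{Def:Collapsing} applied to $(I_1,\dots,I_k)$-separated collections in $L^m$ — none of which appears in your sketch.

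The second gap is Step~3, which you explicitly defer: the separability of the range of $S$. Your Step~1 only yields that each section $S_f$ (and each $S^g$) has separable range, with the bounding ordinal $\eta_f$ depending on $f$; since $f$ ranges over a non-separable space, $\sup_f\eta_f$ may be $\omega_1$, and indeed the scaled multiplication $M(f,g)=rfg$ itself has separable range in each section but not jointly, so sectionwise information can never close this step. The paper's proof of joint separability is its main argument: from a putative failure one extracts triples $(x_\alpha,y_\alpha,z_\alpha)$ with $\hat S_{x_\alpha}(y_\alpha,z_\alpha)\neq 0$, reduces by refinements to the configurations $x_\alpha\notin\{y_\alpha,z_\alpha\}$, $x_\alpha=y_\alpha\neq z_\alpha$, and $x_\alpha=y_\alpha=z_\alpha$, and kills each via Theorems~\ref{Thm:Vanish-A}, \ref{Thm:Vanish-C} and \ref{Thm:Vanish-E} respectively, each of which again rests on the $F_2$-measure regularity and the collapsing property. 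Since these two steps constitute essentially all of Sections~\ref{Se:Frechet} and \ref{Sec:Decomposition}, the proposal as written is an accurate table of contents for the proof rather than a proof.
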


This paper is organized as follows. In Section~\ref{Sec:Basic}, we establish the basic terminology and recall the notion of the \emph{collapsing space}, along with its exotic properties that will be exploited throughout the paper. In Section~\ref{Se:Frechet}, we present fundamental results on Fréchet measures and discuss their intrinsic connection with bilinear forms on $C_0(L)$. Section~\ref{Sec:Decomposition} contains the main technical part of the paper, which reveals the pathologies of many bilinear operators on $C_0(L)$. Finally, in Section~\ref{Sec:Final}, we provide a detailed proof of Theorem~\ref{Thm:Main}.

\section{Terminology and Preliminaries}
\label{Sec:Basic}

We begin by introducing some basic terminology that will be used throughout this work.  
All ordinals are equipped with the usual order topology.  
As is customary, we denote by $\omega$ the first infinite ordinal and by $\omega_1$ the first uncountable ordinal.  

We use the letters $L$ and $K$ to denote (locally) compact Hausdorff spaces.  
In particular, whenever $L$ is locally compact, we let $K=L\cup \{\infty\}$ denote its one-point (Aleksandrov) compactification.  

For such spaces $L$ and $K$, we denote by $C_0(L)$ the Banach space of continuous real-valued functions on $L$ vanishing at infinity, and by $C(K)$ the Banach space of continuous real-valued functions on $K$, both equipped with the supremum norm. Throughout this paper, we always regard $C_0(L)$ as the closed subspace of $C(K)$ consisting of functions $f:K\to\mathbb{R}$ with $f(\infty)=0$.  
For functions $f, g \in C(K)$, we denote by $f \otimes g : K \times K \to \mathbb{R}$ the function defined by $f \otimes g (x,y) = f(x) g(y)$.

For a compact Hausdorff space $K$, we denote by $\mathfrak{B}(K)$ its Borel $\sigma$-algebra, and by $\wp(K)$ its power set. By the Riesz representation theorem \cite[Theorem 18.4.1]{Se}, the topological dual of $C(K)$ is isometrically identified with $\mathcal{M}(K)$, the space of Radon measures on $K$ endowed with the total variation norm. If $\mu\in \mathcal{M}(K)$, we denote by $|\mu|$ its total variation. The dual of $C_0(L)$ is identified with the subspace $\mathcal{M}_0(K) = \{\mu \in \mathcal{M}(K) : \mu(\{\infty\})=0\}$.

At the core of this study lies a locally compact Hausdorff space $L$ constructed in \cite[Theorem~3.6]{CandidoC(KxK)} under Ostaszewski's principle $\clubsuit$. We refer to this space as the \emph{collapsing space} $L$ and recall the properties that will be relevant throughout this paper.  

We recall the class $\mathcal{S}$ of topological spaces, see \cite{CandidoC(KxK)} and \cite{candido2}. A locally compact Hausdorff space $L$ belongs to the class $\mathcal{S}$ if there exists a continuous finite-to-one surjection $\varphi:L\to\omega_1$, that is, $\varphi^{-1}(\{\alpha\})$ is finite for every $\alpha\in\omega_1$. Consequently, $L$ is a first-countable scattered space \cite[Proposition~3.2]{candido2}. We always consider such a space $L$ together with the collection $\{L_\alpha : \alpha\in\omega_1\}$, where $L_\alpha = \varphi^{-1}[\alpha+1] = \{x\in L : \varphi(x)\in \alpha+1\}$.

This collection forms a covering of $L$ by countable clopen sets and satisfies $L_\alpha\subset L_\beta$ whenever $\alpha\leq \beta$. In particular, every function $f\in C_0(L)$ has countable support. For each $\alpha<\omega_1$, we identify $C_0(L_\alpha)$ with the subspace of $C_0(L)$ consisting of functions that vanish outside $L_\alpha$. With this, we can establish the following simple but important fact.

\begin{proposition}\label{Prop:SeparableImage}
Let $L$ be a member of the class $\mathcal{S}$. If $W$ is a separable subset of $C_0(L)$, then there exists $\eta<\omega_1$ such that $X\subset C_0(L_\eta)$.
\end{proposition}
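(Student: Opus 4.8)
The plan is to reduce the statement about the separable set $W$ to a statement about a countable dense subset, use the countability of supports to confine each such element to some $L_\alpha$, and then invoke the regularity of $\omega_1$ to produce a single bound $\eta$ valid for all of them at once. (I read $X$ in the conclusion as the set $W$ from the hypothesis.)

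First I would fix a countable dense subset $\{f_n : n\in\omega\}$ of $W$, which exists by separability. Each $f_n$ lies in $C_0(L)$ and therefore, by the property recalled just before the statement, has countable support $\operatorname{supp}(f_n)=\{x\in L : f_n(x)\neq 0\}$. Since $\varphi:L\to\omega_1$ is a function, the image $\varphi[\operatorname{supp}(f_n)]$ is a countable subset of $\omega_1$.

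The key step is to pass from ``each $f_n$ is controlled by some countable ordinal'' to ``all the $f_n$ are simultaneously controlled.'' Because $\omega_1$ is regular, every countable subset of it is bounded, so there is $\alpha_n<\omega_1$ with $\varphi[\operatorname{supp}(f_n)]\subseteq \alpha_n+1$; this is precisely the assertion $\operatorname{supp}(f_n)\subseteq L_{\alpha_n}$, i.e. $f_n\in C_0(L_{\alpha_n})$. Setting $\eta=\sup_{n}\alpha_n$, the same regularity argument gives $\eta<\omega_1$, and the nesting $L_{\alpha_n}\subseteq L_\eta$ yields $f_n\in C_0(L_\eta)$ for every $n$. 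This appeal to the uncountable cofinality of $\omega_1$ is the heart of the argument; the remaining points are routine.

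Finally I would upgrade from the dense set to all of $W$. Since $L_\eta$ is clopen, the subspace $C_0(L_\eta)$ of functions vanishing outside $L_\eta$ is closed in $C_0(L)$ for the supremum norm: a uniform limit of functions supported in $L_\eta$ is again supported in $L_\eta$. As $C_0(L_\eta)$ is a closed set containing the dense subset $\{f_n\}$ of $W$, it contains the closure of $\{f_n\}$, and hence $W\subseteq C_0(L_\eta)$. Taking this $\eta$ establishes the claim.
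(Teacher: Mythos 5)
Your proof is correct and follows essentially the same route as the paper's: pick a countable dense subset, bound each support inside some $L_{\alpha_n}$, use the uncountable cofinality of $\omega_1$ to get a single $\eta$, and pass to the closure via the closedness of $C_0(L_\eta)$. The only cosmetic difference is that the paper resolves the $W$/$X$ mismatch in the statement by setting $X=\overline{\mathrm{span}}(W)$ and running the same argument on $X$, whereas you read $X$ as $W$; both readings are handled by the identical argument.
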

\begin{proof}
Let $W$ be a separable subset of $C_0(L)$, and set $X=\overline{\mathrm{span}}(W)$. Then $X$ is a separable subspace of $C_0(L)$. Fix a dense sequence $\{f_n\}_{n<\omega}$ in $X$. Since each $f_n$ has countable support, we can choose $\eta_n$ such that $f_n$ vanishes on $L\setminus L_{\eta_n}$. Let $\eta=\sup\{\eta_n:n<\omega\}$. Hence every $g\in X$ vanishes outside $L_\eta$.
\end{proof}

Next, we recall the following terminology from \cite{CandidoC(KxK)}. 
If $\{I_1, \ldots, I_k\}$ is a partition of $\{1, 2, \ldots, m\}$, 
a collection $\mathcal{C}=\{(x_1^\alpha, \ldots, x_m^\alpha) : \alpha < \omega_1\} \subset L^m$ 
is said to be \emph{$(I_1, \ldots, I_k)$-separated} if the family 
$\{(A_1^\alpha, \ldots, A_k^\alpha) : \alpha < \omega_1\}$, where 
$A_j^\alpha = \{x_i^\alpha : i \in I_j\}$ for each $j=1, \ldots, k$, satisfies  
\[
A_i^\alpha \cap A_j^\beta = \emptyset \quad \text{whenever } i \neq j \text{ or } \alpha \neq \beta.
\]

\begin{definition}\label{Def:Collapsing}
A locally compact space $L$ is called \emph{collapsing} if it belongs to the class $\mathcal{S}$ and, 
for every integer $1 \leq m < \omega$, every partition $\{I_1, \ldots, I_k\}$ of $\{1, \ldots, m\}$, 
and every $(I_1, \ldots, I_k)$-separated collection 
$\{(x_1^\lambda, \ldots, x_m^\lambda) : \lambda \in \varGamma\} \subset L^m$, 
there exists an accumulation point $(q_1, \ldots, q_m) \in K^m$ such that there exist distinct points 
$a_1, \ldots, a_k \in K \cup \{\infty\}$, with $a_k = \infty$, such that  
\[
\{q_i : i \in I_j\} = \{a_j\} \quad \text{for every } 1 \leq j \leq k.
\]
\end{definition}

We recall the following result from \cite[Theorem~3.6]{CandidoC(KxK)}, which provides the collapsing space central to our study.
\begin{theorem}\label{Thm:Collapsing}
Under Ostaszewski’s $\clubsuit$ principle, there exists a collapsing space.
\end{theorem}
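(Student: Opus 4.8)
The plan is to construct $L$ by a transfinite recursion of length $\omega_1$, building it as an increasing union of its finite fibres and simultaneously defining the map $\varphi\colon L\to\omega_1$ together with a compatible locally compact scattered topology. I would fix once and for all a $\clubsuit$-sequence $\langle A_\delta : \delta\in\mathrm{Lim}(\omega_1)\rangle$, where each $A_\delta$ is cofinal in $\delta$ of order type $\omega$ and, in the (equivalent, strong) form of the principle, for every uncountable $X\subseteq\omega_1$ there are stationarily many limit $\delta$ with $A_\delta\subseteq X$. The underlying set of $L$ is taken to be $\omega_1$, so that subsets of $L$ are literally subsets of $\omega_1$ and can be guessed by the sequence. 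Throughout the recursion I maintain the invariants that each initial piece $L_\alpha=\varphi^{-1}[\alpha+1]$ is a countable compact clopen set, that the space constructed so far is Hausdorff, first countable and scattered, and that $\varphi$ is a continuous finite-to-one surjection onto $\alpha+1$.

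At successor stages the extension is routine: I adjoin the next finite fibre either as isolated points or as limits of already-constructed $\omega$-sequences, keeping the lower pieces $L_\beta$ clopen and compact. The entire content of the construction sits at the limit stages. At a limit ordinal $\delta$ I use the guess $A_\delta$, together with a fixed coding, to read off a candidate $(I_1,\dots,I_k)$-separated collection $\{(x_1^{\lambda},\dots,x_m^{\lambda}):\lambda\in\varGamma\}$ lying in $\bigcup_{\alpha<\delta}\varphi^{-1}(\{\alpha\})$ and a cofinal-in-$\delta$ subfamily indexed along $A_\delta$. When the guess is coherent, I adjoin $k-1$ new points $a_1,\dots,a_{k-1}$ to the fibre over $\delta$ and define their neighbourhood filters so that, for each $j<k$, every coordinate $x_i^{\lambda}$ with $i\in I_j$ in the chosen subfamily converges to $a_j$, while the coordinates of the last block $I_k$ are left with unbounded $\varphi$-values and no finite limit, so that they accumulate only at $\infty$ in $K=L\cup\{\infty\}$. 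A basic clopen neighbourhood of $a_j$ has the shape $\{a_j\}\cup\bigcup_{n\ge N}\{x_i^{\lambda_n}:i\in I_j\}$, suitably fattened to stay clopen and compact; this realises the accumulation point $(q_1,\dots,q_m)$ with $\{q_i:i\in I_j\}=\{a_j\}$ for $j<k$ and $\{q_i:i\in I_k\}=\{\infty\}$, exactly as in Definition~\ref{Def:Collapsing}.

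To verify the theorem I then check the three requirements. Local compactness, the Hausdorff property and scatteredness follow from the inductive invariants, since every point acquires a compact clopen neighbourhood and distinct points are separated by clopen sets introduced at their stages; first countability and the finite-to-one continuous surjection $\varphi$ witness $L\in\mathcal{S}$, and $L$ is automatically non-metrizable because a compact scattered metrizable space is countable whereas $K$ is uncountable. The collapsing property is where $\clubsuit$ does the work: given any $m$, any partition $\{I_1,\dots,I_k\}$ and any $(I_1,\dots,I_k)$-separated collection, its code $X\subseteq\omega_1$ is uncountable, so $\clubsuit$ produces stationarily many limit $\delta$ with $A_\delta\subseteq X$; at the least such $\delta$ above all the relevant data the construction has already installed the accumulation point described above, yielding the required $(q_1,\dots,q_m)\in K^m$ with distinct $a_1,\dots,a_{k-1},a_k=\infty$.

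The main obstacle, and the part demanding the most care, is making a single $\clubsuit$-sequence catch every separated collection across all the countably many combinatorial shapes — the choice of $m$, of the partition, and of which coordinate sits in which block — while the sequence only guesses plain subsets of $\omega_1$. I would resolve this by fixing in advance a coding of finite tuples and partitions into $\omega_1$ (for instance, splitting $\omega_1$ into pieces each carrying one shape and recovering the tuple sequence from the increasing enumeration of $A_\delta$) and then arguing that a coherent guess occurs stationarily often. The second delicate point is topological: when routing the blocks $I_1,\dots,I_{k-1}$ to the new points $a_1,\dots,a_{k-1}$, one must verify that the declared neighbourhoods stay clopen and compact, keep every earlier $L_\beta$ clopen and the whole space Hausdorff, and force no unintended convergence among previously constructed points. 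This reduces to choosing the finitely many new basic neighbourhoods pairwise disjoint off their limits and disjoint from the escaping block $I_k$, which is precisely where the separatedness of the collection is used.
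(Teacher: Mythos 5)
First, a point of comparison: the paper itself contains no proof of this statement --- Theorem~\ref{Thm:Collapsing} is quoted verbatim from \cite[Theorem~3.6]{CandidoC(KxK)}, so your attempt can only be measured against the general shape of that cited construction (a $\clubsuit$-guided transfinite recursion on $\omega_1$, which is indeed the route you take) and against Definition~\ref{Def:Collapsing} itself. Measured against the latter, there is a concrete flaw in your inductive invariants. You maintain throughout that each initial piece $L_\alpha=\varphi^{-1}[\alpha+1]$ is \emph{compact} clopen. But the guessed collection processed at a limit stage $\delta$ lies entirely in $\varphi^{-1}[\delta]\subset L_\delta$, and if $L_\delta$ is compact and clopen then it is a closed set whose complement is a neighbourhood of $\infty$; hence \emph{every} accumulation point of the coordinates in the escaping block $I_k$ lies in $L_\delta$, and none of them is $\infty$. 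Your claim that the $I_k$-coordinates ``accumulate only at $\infty$'' because their $\varphi$-values are unbounded in $\delta$ is therefore false under your own invariant: unboundedness in $\delta$ does not let a sequence leave the compact set $L_\delta$. Since Definition~\ref{Def:Collapsing} demands an accumulation point $(q_1,\dots,q_m)$ with $\{q_i:i\in I_k\}=\{\infty\}$ (already for $m=2$, $k=2$ this forces one coordinate to $\infty$ while the other converges in $L$), the construction as you have set it up cannot produce a collapsing space. The correct invariant is only that the $L_\alpha$ are countable and clopen (as the paper states for the class $\mathcal{S}$), and the recursion must \emph{actively} arrange that the $I_k$-block of the guessed cofinal subfamily is left closed and discrete in $L$ --- and must preserve this at all later limit stages, which may otherwise adjoin points capturing those coordinates.

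Secondarily, the two places you yourself flag as delicate --- coding uncountably many separated collections of all finite shapes into plain subsets of $\omega_1$ so that a single $\clubsuit$-sequence guesses a coherent cofinal subfamily, and verifying that the newly declared neighbourhoods remain clopen, compact, Hausdorff-separating and non-disruptive --- are precisely where the entire content of \cite[Theorem~3.6]{CandidoC(KxK)} lives, and your text only gestures at them (``I would resolve this by fixing in advance a coding\dots''). As written, the proposal is a plan for a proof rather than a proof, and its one fully specified structural commitment (compactness of the $L_\alpha$) is incompatible with the property being established.
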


\begin{remark}
It should be emphasized that collapsing spaces cannot be constructed within \textbf{ZFC} alone, as their existence requires additional set-theoretic assumptions; see \cite[Remark~3.8]{CandidoC(KxK)}. However, it is not excluded that a version of Theorem~\ref{Thm:Main} might hold under other combinatorial principles, as in \cite{Koz5}, or even within \textbf{ZFC} itself, as in \cite{Koz1}.
\end{remark}

The following result is a streamlined for our purposes version of the combination of \cite[Theorem 4.3 and Corollary 4.6]{CandidoC(KxK)}. We recall that $\nu:K\to \mathcal{M}_0(K)$ is said to be  a weak$^*$-continuous function vanishing at infinity if $\nu$ is continuous, when $\mathcal{M}_0(K)$ is endowed with the weak$^*$-topology induced by $C_0(L)$ and such that $\nu(\infty)=0$, see \cite[Theorem 4.1]{CandidoC(KxK)}.

\begin{proposition}\label{Thm:AuxiliarCallapse2}
Let $\nu:K\to \mathcal{M}_0(K)$ be a weak$^*$-continuous function vanishing at infinity.  
Then there exist $r\in\mathbb{R}$ and $\rho<\omega_1$ such that, for every $x,y\in L\setminus L_\rho$, one has
\[
\nu_{x}(\{y\})=
\begin{cases}
r, & \text{if } x = y, \\
0, & \text{if } x \neq y.
\end{cases}
\]
\end{proposition}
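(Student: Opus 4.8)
The plan is to work with the bounded linear operator $T:C_0(L)\to C_0(L)$ represented by $\nu$, namely $(Tf)(x)=\nu_x(f)$; weak$^*$-continuity and $\nu(\infty)=0$ guarantee that $Tf\in C_0(L)$, while weak$^*$-compactness of $\nu[K]$ together with the uniform boundedness principle yields $C:=\sup_{x\in K}\|\nu_x\|<\infty$. Since $K$ is compact and scattered, every $\nu_x$ is purely atomic, so $\nu_x$ is completely determined by the coefficients $\nu_x(\{y\})$, $y\in L$; moreover $\nu(\infty)=0$ gives $\nu_x\to 0$ weak$^*$ as $x\to\infty$ in $K$. I would then split the statement into two claims: (I) an \emph{off-diagonal} vanishing, that $\nu_x(\{y\})=0$ whenever $x\neq y$ lie outside some $L_{\rho_1}$; and (II) a \emph{diagonal} rigidity, that $x\mapsto\nu_x(\{x\})$ is constant on $L\setminus L_{\rho_2}$. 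The desired $\rho$ is the larger of $\rho_1,\rho_2$ and $r$ is the common diagonal value.

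For (I) I argue by contradiction. If it fails, then for every $\rho$ there is a pair $x\neq y$ outside $L_\rho$ with $\nu_x(\{y\})\neq 0$; since each $L_\rho$ is countable this produces uncountably many such pairs, and after a sign/pigeonhole refinement (using $\sum_{y}|\nu_x(\{y\})|\le C$ to control repetitions of a fixed coordinate) I obtain an uncountable family $\{(x^\lambda,y^\lambda)\}$ of distinct points with $\nu_{x^\lambda}(\{y^\lambda\})\ge\varepsilon$ for some fixed $\varepsilon>0$. This family is $(\{1\},\{2\})$-separated, so Definition~\ref{Def:Collapsing} applied with the partition $(\{2\},\{1\})$ yields a subnet along which the sources $x^\lambda\to\infty$ while the targets $y^\lambda\to b$ for some $b\in L$. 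Fixing a compact clopen neighbourhood $V\ni b$, the vanishing $\nu(\infty)=0$ forces $\nu_{x^\lambda}(V)\to 0$, whereas each $\nu_{x^\lambda}$ carries an atom of size $\ge\varepsilon$ at the point $y^\lambda\in V$. The discrepancy can only be absorbed by compensating mass of $\nu_{x^\lambda}$ accumulating at $b$; I remove it by promoting the dominant compensating atom to a third coordinate and re-applying the collapsing property with the block partition $(\{2\},\{3\},\{1\})$, which drives that atom to a limit distinct from $b$, hence out of $V$. This keeps $\nu_{x^\lambda}(V)\ge\varepsilon/2$ along the subnet, contradicting $\nu_{x^\lambda}(V)\to 0$. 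The main obstacle is precisely this control of the compensating mass: a single atom at $y^\lambda$ is invisible to weak$^*$ limits whenever it is balanced by opposite mass clustering at the same limit point, and defeating this cancellation uniformly along the subnet is the technical heart of the argument, for which the block structure of Definition~\ref{Def:Collapsing} is indispensable.

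Granting (I), for $x\notin L_{\rho_1}$ one has $\nu_x=c(x)\,\delta_x+\mu_x$ with $c(x)=\nu_x(\{x\})$ and $\mu_x$ supported on $L_{\rho_1}$. Testing against $\mathbf{1}_V$ for small compact clopen neighbourhoods $V$ disjoint from $L_{\rho_1}$ shows $c(x)=\nu_x(V)$ locally, so $c$ is continuous on $L\setminus L_{\rho_1}$. For (II) I again argue by contradiction: if $c$ took two distinct values along cofinal sets, I would pick $x^\lambda,\tilde x^\lambda$ outside $L_{\rho_1}$ realising these values, together with a dummy point $w^\lambda$, forming an uncountable family of triples that is $(\{1,2\},\{3\})$-separated. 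Collapsing with the block partition $(\{1,2\},\{3\})$ then sends $x^\lambda$ and $\tilde x^\lambda$ to a \emph{common} limit $a\in L\setminus L_{\rho_1}$ while $w^\lambda\to\infty$; continuity of $c$ at $a$ forces the two values to coincide, a contradiction. This shows that at most one value is attained cofinally. Upgrading this to \emph{exact} eventual constancy, ruling out values that merely approach the cofinal value $r$ without equalling it, is the remaining delicate point; I would settle it by a pressing-down argument along the finite-to-one map $\varphi$, exploiting first countability of $L$ in the same way that continuous real functions on $\omega_1$ are forced to be eventually constant.

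Finally, with $\rho=\max\{\rho_1,\rho_2\}$ and $r$ the common diagonal value, the two claims give the asserted description of $\nu_x(\{y\})$ for $x,y\in L\setminus L_\rho$; uniqueness of $r$ is immediate, since $r=\nu_x(\{x\})$ for any such $x$.
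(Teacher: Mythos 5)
First, note that the paper does not prove this proposition at all: it is imported verbatim from \cite{CandidoC(KxK)} (Theorem 4.3 and Corollary 4.6 there), so the only in-paper material to compare against is the bilinear machinery of Section~\ref{Sec:Decomposition}, whose architecture (off-diagonal vanishing via the collapsing property, then diagonal rigidity) your outline correctly reproduces. Within that outline, however, there are two concrete gaps. In step (I), ``promoting the dominant compensating atom to a third coordinate'' is a step that would fail as stated: the mass of $\nu_{x^\lambda}$ cancelling the atom at $y^\lambda$ inside $V$ need not be carried by any single dominant atom --- it can be spread over arbitrarily many small atoms accumulating at $b$, none of which individually exceeds any fixed threshold. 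The correct repair is the one the paper uses in the bilinear setting (Theorem~\ref{Thm:Regularity} and Lemma~\ref{Lem:Separation}): since $\nu_{x^\lambda}$ is atomic with $\|\nu_{x^\lambda}\|\le C$, choose a \emph{finite} set $I_\lambda\ni y^\lambda$ with $|\nu_{x^\lambda}|(K\setminus I_\lambda)<\varepsilon/2$, stabilize $|I_\lambda|$ and disjointify the $I_\lambda$ by a $\varDelta$-system refinement, and only then apply Definition~\ref{Def:Collapsing} with the partition $(\{1\},\{2,\dots,m+2\})$ sending the entire block of auxiliary coordinates to $\infty$; this forces $V\cap I_\lambda=\{y^\lambda\}$ along a subnet and gives $|\nu_{x^\lambda}(V)|>\varepsilon/2$ honestly.

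In step (II), the passage from ``no two values of $c$ are attained cofinally'' to ``$c$ is eventually constant'' is the missing idea, and you flag it yourself but do not close it: a ``pressing-down argument along $\varphi$'' is not a proof, and $c$ could a priori take each of uncountably many distinct values only countably often. The standard tool --- used explicitly in the paper's diagonal result, Theorem~\ref{Thm:Vanish-D} --- is the dichotomy of \cite[Lemma~4.6]{candido2}: if $c$ is not eventually constant, there exist rationals $p<q$ and \emph{uncountable} sets $A,B\subset L\setminus L_{\rho_1}$ with $c<p$ on $A$ and $c>q$ on $B$. With that in hand, your own device (pairing points of $A$ and $B$ into a $(\{1,2\},\{3\})$-separated family with a dummy coordinate forced to $\infty$, collapsing to a common limit $a$, and using the local identity $c(x)=\nu_x(V)$ to get continuity of $c$ on the clopen set $L\setminus L_{\rho_1}$) does finish the argument, and is in fact slightly slicker than carrying variation estimates along the net. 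So the skeleton is sound, but both the single-atom promotion and the eventual-constancy upgrade need to be replaced by the finite-regularity-set and oscillation-dichotomy arguments, respectively, before this constitutes a proof.
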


To conclude this section, we introduce some additional terminology that will be used throughout the paper.

\begin{definition}\label{Def:Refinement}
Let $\{x_\alpha\}_{\alpha<\omega_1}$ be a transfinite sequence of objects.  
A \emph{refinement} (or \emph{refined sequence}) of this family is any subsequence 
$\{x_{\alpha_\gamma}\}_{\gamma<\omega_1}$, possibly selected to satisfy additional properties. More generally, given several transfinite sequences $\{x^1_\alpha\}_{\alpha<\omega_1},\ldots,\{x^m_\alpha\}_{\alpha<\omega_1}$, a \emph{simultaneous refinement} is obtained by choosing a strictly increasing sequence of ordinals $\{\alpha_\gamma\}_{\gamma<\omega_1}$ and considering the corresponding subsequences
$\{x^1_{\alpha_\gamma}\}_{\gamma<\omega_1}, \ldots, \{x^m_{\alpha_\gamma}\}_{\gamma<\omega_1}$. Whenever no ambiguity arises, we continue to denote these refined sequences by the original symbols $\{x^1_\alpha\}_{\alpha<\omega_1}, \ldots, \{x^m_\alpha\}_{\alpha<\omega_1}$.
\end{definition}

All remaining terminology used in this paper can be found in \cite{CandidoC(KxK)}.

\section{Fréchet Measures and the Structure of Bilinear Forms on $C(K)$}
\label{Se:Frechet}
In this section, we collect several results concerning the interplay between bilinear forms on $C(K)$ and the so-called Fr\'echet measures on $K \times K$, focusing on the particular case where $K$ is a scattered compact Hausdorff space. Our main reference for this material is the comprehensive monograph by Ron Blei \cite[Chapter IV and Chapter VI]{Blei}, from which we adopt much of the basic terminology and adapt several results to our setting.

\begin{definition}
An \emph{$F_2$-measure} (or \emph{2–Fr\'echet measure}) on $K \times K$ is a function 
$\mu : \mathfrak{B}(K) \times \mathfrak{B}(K) \longrightarrow \mathbb{R}$ such that 
$\mu$ is a measure in each coordinate separately. That is, for every fixed 
$A \in \mathfrak{B}(K)$, both mappings $B \mapsto \mu(A,B)$ and $B \mapsto \mu(B,A)$ 
are (signed) measures on $\mathfrak{B}(K)$. 

We denote by $\mathcal{F}(K,K)$ the collection of all such $F_2$-measures on $K \times K$.  If $K = L \cup \{\infty\}$ is the one-point compactification of $L$, we denote by $\mathcal{F}_0(K,K)$ the subspace of $\mathcal{F}(K,K)$ consisting of all $F_2$-measures 
$\mu$ such that the mappings $B \mapsto \mu(\{\infty\},B)$ and $B \mapsto \mu(B,\{\infty\})$ are identically zero.
\end{definition}

For the benefit of readers not familiar with $F_2$-measures, we briefly outline the concept of integration with respect to a measure $\mu \in \mathcal{F}(K,K)$, following Blei's notation \cite[Chapter IV, Section 5]{Blei}. First, for every function $g \in C_0(L)$, 
we denote by $\mu_g$ the measure on $K$ defined by the formula
\[
\mu_g(A) = \int g \, d\mu(\cdot, A).
\]

For functions $f,g \in C(K)$, the iterated integral of $f \otimes g$ with respect to $\mu$ is given by
\[
\int f \otimes g \, d\mu 
   = \int_{K} f(x) \, d\mu_g=  \int_{K} f(x) \left( \int_{K} g(y)\,\mu(\cdot, dy) \right)(dx).
\]

It is important to note (see \cite[Chapter VI, Theorem~10]{Blei}) that the following Fubini-type property holds:
\begin{align*}
\int f \, d\mu g
   &= \int_{K} f(x) \left( \int_{K} g(y)\,\mu(\cdot, dy) \right)(dx) \\
   &= \int_{K} g(y) \left( \int_{K} f(x)\,\mu(dx, \cdot) \right)(dy) = \int g \, d\mu f.
\end{align*}

The space $\mathcal{F}(K,K)$ can be equipped with a norm under which it becomes a Banach space. This norm is given by the Fr\'echet variation, which we recall below; see \cite[Chapter VI, Section 3]{Blei}. 
 
We say that a collection $\varLambda \subset \mathfrak{B}(K) \times \mathfrak{B}(K)$ is a \emph{grid} for $K \times K$ if $\varLambda = \varLambda_1 \times \varLambda_2$, where each $\varLambda_i$ is a finite collection of pairwise disjoint Borel subsets of $K$. If $\varLambda = \varLambda_1 \times \varLambda_2$ is a grid for $K \times K$, we denote by $\{ r_A : A \in \varLambda_i \}$ the Rademacher system on $\{-1,1\}^{\varLambda_i}$, that is, $r_A:\{-1,1\}^{\varLambda_i} \to \{-1,1\}$ is defined by $r_A(h)=h(A)$ for $h \in \{-1,1\}^{\varLambda_i}$, for $i=1,2$.

Now, if $\mu$ is a Fr\'echet measure in $\mathcal{F}(K,K)$, the \emph{Fr\'echet variation} is given by 
\[
\|\mu\|_{\mathcal{F}_2}=\sup\left\{\, \Big\|\sum_{(A,B)\in \varLambda}
   \mu(A,B)\, r_{A}\otimes r_{B}\Big\|_\infty
   : \varLambda \text{ is a grid for } K\times K
\right\}.
\]
According to \cite[Chapter IV, Theorem~5]{Blei}, we have $\|\mu\|_{\mathcal{F}_2}<\infty$. 

Since our focus is on the case where $K$ is a scattered compact space, the next definition introduces an important object that will play a fundamental role in our study.  
To simplify the notation, we shall adopt the following convention:  for $\mu \in \mathcal{F}(K,K)$ and $x,y \in K$, we write $\mu(x,y) = \mu(\{x\},\{y\})$.  Similarly, if $\mu \in \mathcal{M}(K)$, we write $\mu(x)= \mu(\{x\})$ for every $x \in K$.

\begin{definition}
Let $K$ be a scattered compact Hausdorff space and let $\mu\in \mathcal{F}(K,K)$. The \emph{total variation} of $\mu$ is the function $\mathrm{Var}(\mu):\wp(K)\times \wp(K)\to [0,\infty)$ defined by
\[\mathrm{Var}(\mu)(U,V)=\sup\Biggl\{ \sum_{x\in S}\sum_{y\in T} \mu(x,y)\,u_x v_y : T\times S \subset U\times V \ \text{finite},\ u_x,v_y\in[-1,1] \Biggr\}.\]
\end{definition}
We first note that $\mathrm{Var}(\mu)$ is well-defined, since by an argument similar to that of \cite[Chapter I, Lemma~8]{Blei} one has 
\[
\mathrm{Var}(\mu)(U,V) \le \|\mu\|_{\mathcal{F}_2} 
\quad \text{for every } (U,V)\in \wp(K)\times \wp(K).
\]
Although $\mathrm{Var}(\mu)$ does not, in general, belong to $\mathcal{F}(K,K)$, 
it enjoys several properties that will be fundamental in our analysis. 
For instance, it is subadditive: if $U, V_1,\ldots, V_n \in \wp(K)$ with $V_i\cap V_j=\emptyset$ for $i\neq j$, then  
\[
\mathrm{Var}(\mu)\!\left(U,\bigcup_{i=1}^n V_i\right)\leq \sum_{i=1}^n\mathrm{Var}(\mu)(U,V_i),
\quad
\mathrm{Var}(\mu)\!\left(\bigcup_{i=1}^n V_i,U\right)\leq \sum_{i=1}^n\mathrm{Var}(\mu)(V_i,U).
\]

It is also monotone: if $U_1\subset U_2$ and $V_1\subset V_2$, then
\[
\mathrm{Var}(\mu)(U_1,V_1) \le \mathrm{Var}(\mu)(U_2,V_2).
\]

A key fact for our research is the close relationship between $F_2$-measures on $K \times K$ and bilinear forms on $C(K)$. The following result provides a Riesz-type representation theorem for bilinear forms on $C(K)$ when $K$ is a scattered compact space.

\begin{theorem}\label{Thm:RieszBilinearScattered}
Let $K$ be a Hausdorff scattered compact space. Then, for every bilinear form 
$G$ on $C(K)\times C(K)$, there exists a unique $\mu \in \mathcal{F}(K,K)$ such that, for all $f,g \in C(K)$,
\begin{equation}\label{Rel:01}
G(f,g) \;=\; \sum_{x \in K} \Biggl( \sum_{y \in K} \mu(x,y)\, g(y) \Biggr) f(x) 
\;=\; \sum_{y \in K} \Biggl( \sum_{x \in K} \mu(x,y)\, f(x) \Biggr) g(y).
\end{equation}
Moreover, the expression in \eqref{Rel:01} is well-defined for all $f,g \in \ell_\infty(K)$, 
and therefore $G$ can be naturally extended to a bilinear form on $\ell_\infty(K)\times \ell_\infty(K)$ 
with the same norm. In addition, $\|G\| \;=\; \mathrm{Var}(\mu)(K,K)$.
\end{theorem}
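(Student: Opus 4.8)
The plan is to translate the bilinear form into a single linear operator, extract a representing vector measure, and read off $\mu$ from it. First I would associate to $G$ the bounded operator $T\colon C(K)\to C(K)^*=\mathcal M(K)$ determined by $\langle Tf,g\rangle=G(f,g)$, so that $\|T\|=\|G\|$. Since $K$ is scattered, every Radon measure on $K$ is purely atomic, so $\mathcal M(K)$ is isometric to $\ell_1(K)$; in particular it contains no isomorphic copy of $c_0$. By the classical theory of operators on $C(K)$-spaces---operators into a Banach space containing no copy of $c_0$ are weakly compact, and a weakly compact operator on $C(K)$ is represented by a strongly countably additive vector measure---the operator $T$ admits a representing measure $m\colon\mathfrak B(K)\to\mathcal M(K)$ satisfying $Tf=\int f\,dm$ for every $f\in C(K)$. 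I would then define $\mu(A,B)=\bigl(m(A)\bigr)(B)$ for $A,B\in\mathfrak B(K)$.

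The next step is to check that this $\mu$ lies in $\mathcal F(K,K)$ and represents $G$. Fixing $A$, the map $B\mapsto m(A)(B)$ is a measure because $m(A)\in\mathcal M(K)$; fixing $B$, the map $A\mapsto m(A)(B)$ is the composition of the countably additive vector measure $m$ with the continuous evaluation $\lambda\mapsto\lambda(B)$, hence a scalar measure. Thus $\mu$ is separately a measure in each coordinate. To obtain \eqref{Rel:01} I would expand $G(f,g)=\langle Tf,g\rangle=\int g\,d(Tf)$ and use atomicity twice: writing $Tf=\int f\,dm$ and scalarizing the vector integral gives $(Tf)(\{y\})=\int f\,d\nu_y$ with $\nu_y=m(\cdot)(\{y\})$, whence $\mu(x,y)=m(\{x\})(\{y\})$ and $G(f,g)=\sum_y\bigl(\sum_x\mu(x,y)f(x)\bigr)g(y)$. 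Equality of the two iterated sums is exactly the Fubini-type property for $F_2$-measures recalled above from \cite[Chapter VI, Theorem~10]{Blei}. Moreover, since the inner sums integrate bounded functions against the finite measures $\mu(\{x\},\cdot)$ while the outer sum integrates against the finite measure $\mu_g$, the iterated sums converge for all $f,g\in\ell_\infty(K)$, defining the extension $\overline G$.

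For uniqueness I would observe that on a scattered $K$ an $F_2$-measure is completely determined by its point values $\mu(x,y)$, because fixing either coordinate yields an atomic scalar measure. Plugging $f=\mathbf 1_{\{x\}}$ and $g=\mathbf 1_{\{y\}}$ into the $\ell_\infty$-representation forces $\mu(x,y)=\overline G(\mathbf 1_{\{x\}},\mathbf 1_{\{y\}})$, a quantity determined by $G$; hence $\mu$ is unique.

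Finally, for the norm identity I would first establish $\|\overline G\|=\mathrm{Var}(\mu)(K,K)$: restricting $\overline G$ to finitely supported $[-1,1]$-valued arguments reproduces exactly the supremum defining $\mathrm{Var}(\mu)(K,K)$, while for arbitrary $f,g$ in the unit ball of $\ell_\infty(K)$ each finite truncation of the double sum is bounded by $\mathrm{Var}(\mu)(K,K)$, giving the reverse inequality. Since $C(K)\subset\ell_\infty(K)$ isometrically, $\|G\|\le\|\overline G\|=\mathrm{Var}(\mu)(K,K)$. The delicate direction is $\|G\|\ge\mathrm{Var}(\mu)(K,K)$, which I expect to be the main obstacle: given finite sets $S,T$ and coefficients $u_x,v_y\in[-1,1]$ nearly attaining the variation, I would use Urysohn's lemma to build continuous functions $\tilde f,\tilde g$ of norm at most one that agree with $(u_x)$ on $S$ and with $(v_y)$ on $T$ and are supported in small neighborhoods of these finite sets. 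The difficulty is that when a point of $S$ or $T$ is non-isolated, $\tilde f$ and $\tilde g$ necessarily take nonzero values at nearby points, producing error terms $\sum\mu(x,y)\tilde f(x)\tilde g(y)$ over pairs off $S\times T$; controlling these requires the regularity of the measures $\mu(\cdot,\{y\})$ and $\mu(\{x\},\cdot)$ together with the finiteness of $\mathrm{Var}(\mu)$, so that shrinking the neighborhoods makes the error arbitrarily small and $G(\tilde f,\tilde g)$ approximates $\sum_{x\in S,y\in T}\mu(x,y)u_xv_y$. Combining the two inequalities yields $\|G\|=\mathrm{Var}(\mu)(K,K)=\|\overline G\|$.
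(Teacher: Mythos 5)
Your argument is essentially correct, but it follows a genuinely different route from the paper. The paper imports almost everything at once from Blei's representation theorem for bilinear forms on $C(K)\times C(K)$ (\cite[Chapter VI, Theorem~12]{Blei}): that result already supplies the unique $\mu\in\mathcal F(K,K)$ together with the bound $\|\mu\|_{\mathcal F_2}\le\|G\|$, and the only remaining work is to use atomicity of the marginal measures to turn the iterated integrals into iterated sums and to close the chain $\|\tilde G\|\le\mathrm{Var}(\mu)(K,K)\le\|\mu\|_{\mathcal F_2}\le\|G\|\le\|\tilde G\|$. In particular, the direction you single out as the main obstacle, $\|G\|\ge\mathrm{Var}(\mu)(K,K)$, costs the paper nothing: it is exactly the inequality $\mathrm{Var}(\mu)(K,K)\le\|\mu\|_{\mathcal F_2}$ recorded right after the definition of the total variation, combined with Blei's bound. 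Your construction of $\mu$ through the operator $T:C(K)\to\mathcal M(K)\cong\ell_1(K)$, Pe{\l}czy\'nski's theorem (an operator from $C(K)$ into a space containing no copy of $c_0$ is weakly compact) and the Bartle--Dunford--Schwartz representing measure is a legitimate, more self-contained alternative that makes the role of scatteredness explicit. Your approximation argument for the hard norm inequality also goes through, and in fact more easily than you anticipate: a scattered compactum is zero-dimensional, so $\tilde f$ and $\tilde g$ can be taken as $[-1,1]$-combinations of indicators of pairwise disjoint clopen neighbourhoods; then $G(\tilde f,\tilde g)$ is a \emph{finite} sum of values $\mu(U_x,V_y)$, and the error is killed by shrinking first the $V_y$ (using countable additivity of $B\mapsto\mu(\{x\},B)$) and then the $U_x$ (using countable additivity of $A\mapsto\mu(A,V_y)$), with no need to control $\mathrm{Var}(\mu)$ on the complementary sets.

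The one step you should tighten is uniqueness. As written, you determine $\mu(x,y)$ as $\overline G(\mathbf 1_{\{x\}},\mathbf 1_{\{y\}})$, but $\overline G$ is defined through $\mu$ itself, so this is circular until you know that the extension depends only on the restriction of $G$ to $C(K)\times C(K)$. The repair is the same clopen approximation: $\mu(x,y)=\lim_{U,V}G(\chi_U,\chi_V)$ over clopen neighbourhoods shrinking to $x$ and $y$, by regularity and atomicity of the marginals, so the point masses, and hence (by atomicity again) all of $\mu$, are determined by $G$ on continuous functions. The paper sidesteps this entirely by quoting the uniqueness clause of Blei's theorem.
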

\begin{proof}
By \cite[Chapter VI, Theorem~12]{Blei}, there exists a unique $\mu \in \mathcal{F}(K,K)$ such that  
\[
G(f,g) = \int f \otimes g \, d\mu = \int_K f(x)\, d\mu_g
\]
with $\|\mu\|_{\mathcal{F}_2} \leq \|G\|$. Since $K$ is a scattered compact space, $\mu_g$ is an atomic measure (see \cite[Definition 19.7.1]{Se}), and therefore
\[
G(f,g)=\sum_{x\in K} f(x)\mu_g(x),
\]
where this summation is well defined because $(\mu_g(x))_{x\in K}$ belongs to $\ell_1(K)$ (see \cite[Theorem~19.7.7]{Se}). Next, for each $x\in K$, since the marginal measure
$E\mapsto \mu(\{x\},E)$ is atomic, we have
\[
\mu_g(x)=\int_K g(y) \, d\mu(x, dy)=\sum_{y\in K} g(y)\mu(x,y),
\]
and again the summation is well defined since $(\mu(x,y))_{y\in K} \in \ell_1(K)$. Therefore,
\[
G(f,g) = \sum_{x\in K} \left( \sum_{y\in K} \mu(x,y)\, g(y) \right) f(x).
\]
The second equality in \eqref{Rel:01} follows from the Fubini-type relation mentioned earlier. 

Now let $f$ and $g$ be elements of the unit ball $B_{\ell_\infty(K)}$. Since $f(x),g(y)\in [-1,1]$ for every $x,y \in K$, it follows that for all finite sets $S,T\subset K$,
\[
\left|\sum_{x \in S} \sum_{y \in T}\mu(x,y) \, f(x) \, g(y) \right|
   \leq \mathrm{Var}(\mu)(K,K)\leq \|\mu\|_{\mathcal{F}_2}.
\]
This shows that \eqref{Rel:01} is well-defined and defines a bilinear form $\tilde{G}$ on $\ell_\infty(K)\times \ell_\infty(K)$ extending $G$. Moreover, 
we have $\|\tilde{G}\| \leq \mathrm{Var}(\mu)(K,K) \leq \|G\|$, and hence $\|G\| = \|\tilde{G}\| = \mathrm{Var}(\mu)(K,K)$.
\end{proof}

\begin{remark}\label{Rem:C_0(L)case}
In our analysis, we focus on bilinear forms on $C_0(L)$ rather than on $C(K)$. Recall that $C_0(L) = \{f \in C(K) : f(\infty)=0\}$; any bilinear form $G:C_0(L)\times C_0(L)\to \mathbb{R}$ admits a natural extension $\tilde{G}:C(K)\times C(K)\to \mathbb{R}$ defined by $\tilde{G}(f,g) = G(f-f(\infty), g-g(\infty))$. Applying Theorem~\ref{Thm:RieszBilinearScattered} to $\tilde{G}$ yields a unique $\mu \in \mathcal{F}(K,K)$ such that \eqref{Rel:01} holds when restricted to $C_0(L)$, and all remaining statements, including the extension to $\ell_\infty(K)\times \ell_\infty(K)$ and the norm equality, remain valid. Since $\mu(\{\infty\},U) = \mu(U,\{\infty\}) = 0$ for every $U\subset K$, hence $\mu \in \mathcal{F}_0(K,K)$.
\end{remark}

\begin{remark}
From Theorem \ref{Thm:RieszBilinearScattered} it follows that if $K$ is a scattered compact space and $\mu \in \mathcal{F}(K,K)$, then for every $U,V\subset K$ we have
\[
\mu(U,V) \;=\; \sum_{x \in U} \Biggl( \sum_{y \in V} \mu(x,y) \Biggr).
\]
This, in turn, yields the important fact that for every $U,V \in \wp(K)$ one has $|\mu(U,V)|\leq\mathrm{Var}(\mu)(U,V)$. Indeed, given $\epsilon>0$, there exist finite sets $S\subset U$ and $T\subset V$ such that 
\[
|\mu(U,V)| - \epsilon 
< \left| \sum_{x\in S}\sum_{y\in T} \mu(x,y)\right|
\;\leq\; \mathrm{Var}(\mu)(S,T)
\;\leq\; \mathrm{Var}(\mu)(U,V).
\]
\end{remark}

Finally, for every $\mu \in \mathcal{F}(K,K)$ we can establish a finite-support regularity property associated with $\mathrm{Var}(\mu)$ (Theorem~\ref{Thm:Regularity}), which will play a central role in the next section. To this end, we require the following auxiliary result, adapted from \cite[Chapter IV, Lemmas~3 and 4]{Blei}.

\begin{lemma}\label{Lem:RegularityAux1}
Let $K$ be a scattered compact Hausdorff space and let $\mu\in\mathcal{F}(K,K)$. 
Let $\{A_n\}_{n<\omega}$ and $\{B_n\}_{n<\omega}$ be sequences of pairwise disjoint sets such that $A_n\cap B_m=\varnothing$ whenever $m\neq n$. Then
\[\sum_{n=0}^\infty \mathrm{Var}(\mu)(A_n,B_n)\leq \mathrm{Var}(\mu)(K,K).\]
\end{lemma}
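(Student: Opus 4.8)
The plan is to prove the inequality
\[
\sum_{n=0}^\infty \mathrm{Var}(\mu)(A_n,B_n)\leq \mathrm{Var}(\mu)(K,K)
\]
by reducing the infinite sum to an arbitrary finite partial sum and then exhibiting a single grid (or a single pair of finite sets with sign choices) that witnesses a value close to $\sum_{n=0}^{N}\mathrm{Var}(\mu)(A_n,B_n)$ while remaining bounded by $\mathrm{Var}(\mu)(K,K)$. First I would fix $N<\omega$ and $\epsilon>0$, and for each $n\leq N$ use the definition of $\mathrm{Var}(\mu)$ to select finite sets $S_n\subset A_n$ and $T_n\subset B_n$, together with scalars $u^{(n)}_x,v^{(n)}_y\in[-1,1]$, such that
\[
\sum_{x\in S_n}\sum_{y\in T_n}\mu(x,y)\,u^{(n)}_x v^{(n)}_y > \mathrm{Var}(\mu)(A_n,B_n)-\tfrac{\epsilon}{N+1}.
\]

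The crucial structural input is the disjointness hypothesis: the $A_n$ are pairwise disjoint, the $B_n$ are pairwise disjoint, and $A_n\cap B_m=\varnothing$ when $m\neq n$. This guarantees that the sets $S=\bigcup_{n\leq N}S_n$ and $T=\bigcup_{n\leq N}T_n$ each consist of disjoint finite pieces, so I can define global scalars $u_x,v_y\in[-1,1]$ by setting $u_x=u^{(n)}_x$ for $x\in S_n$ and $v_y=v^{(n)}_y$ for $y\in T_n$, with no conflict of assignment. The only remaining concern is the cross terms: in the double sum $\sum_{x\in S}\sum_{y\in T}\mu(x,y)u_xv_y$ one also picks up contributions $\sum_{x\in S_n}\sum_{y\in T_m}$ for $n\neq m$, which are not controlled by the individual estimates. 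I expect this to be the main obstacle, and I would handle it by arranging, via the disjointness and a sign-refinement argument, that these cross terms either vanish or can be forced to be nonnegative. Concretely, one replaces the scalars $u^{(n)}_x,v^{(n)}_y$ by a simultaneous $\pm 1$ sign choice using the Rademacher/grid formulation, and averages over signs so that cross-block contributions cancel in expectation while the diagonal blocks survive; alternatively, since $A_n\cap B_m=\varnothing$ for $m\neq n$ ensures the index sets $S_n$ and $T_m$ are disjoint as subsets of $K$, one can absorb the off-diagonal terms into a single application of the definition of $\mathrm{Var}(\mu)(S,T)$.

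Once the cross terms are neutralized, the chosen global scalars yield
\[
\sum_{n=0}^{N}\mathrm{Var}(\mu)(A_n,B_n)-\epsilon
< \sum_{x\in S}\sum_{y\in T}\mu(x,y)\,u_x v_y
\leq \mathrm{Var}(\mu)(S,T)\leq \mathrm{Var}(\mu)(K,K),
\]
where the last two inequalities are immediate from the definition of the total variation and its monotonicity. Letting $\epsilon\to 0$ gives $\sum_{n=0}^{N}\mathrm{Var}(\mu)(A_n,B_n)\leq\mathrm{Var}(\mu)(K,K)$ for every $N$, and then letting $N\to\infty$ yields the claim. I would follow the structure of \cite[Chapter~IV, Lemmas~3 and 4]{Blei} closely, since the Rademacher-average device for killing off-diagonal interaction is exactly the technique used there; the adaptation to the scattered setting is mostly bookkeeping, as atomicity lets us work with finite sets of points rather than general Borel pieces.
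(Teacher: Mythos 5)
Your proposal is correct and follows essentially the same route as the paper: the paper also selects near-optimal finite witnesses $S_i\subset A_i$, $T_i\subset B_i$ and kills the cross-block terms by averaging over a Rademacher system (defining $f_h=\sum_n S_n(h)\chi_{A_n}$, $g_h=\sum_n S_n(h)\chi_{B_n}$ so that $\mathbb{E}[f_h(x)g_h(y)]=\sum_n\chi_{A_n\times B_n}(x,y)$), exactly the device you identify from Blei. The only caveat is that your final displayed chain should be stated in averaged form (the pointwise bound by $\mathrm{Var}(\mu)(K,K)$ holds for each sign choice, and the lower bound only after taking expectation); the ``alternative'' of absorbing off-diagonal terms directly into $\mathrm{Var}(\mu)(S,T)$ does not work as stated, but you do not rely on it.
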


\begin{proof}
Let $\Omega=\{-1,1\}^\omega$ endowed with the usual product $\sigma$-algebra, and let $\mathbb{P}$ be the Bernoulli product measure on $\Omega$ (so that each coordinate is $\tfrac12\delta_{-1}+\tfrac12\delta_{1}$). Denote by $\{S_n: n<\omega\}$ the Rademacher system on $\Omega$.

For $h\in\Omega$ and $x\in K$, consider the functions $f_h,g_h\in \ell_\infty(K)$ given by 
\[
f_h(x)=\sum_{n=0}^\infty S_n(h)\,\chi_{A_n}(x), \qquad 
g_h(x)=\sum_{n=0}^\infty S_n(h)\,\chi_{B_n}(x).
\]
Note that $f_h$ and $g_h$ are well defined by the disjointness hypothesis on the collections $\{A_n\}_{n<\omega}$ and $\{B_n\}_{n<\omega}$.  
Moreover, if $\mathbb{E}$ denotes expectation with respect to $\mathbb{P}$, then for all $x,y\in K$,
\[\mathbb{E}\big[f_h(x)g_h(y)\big] \;=\; \sum_{n=0}^\infty \chi_{A_n\times B_n}(x,y).\]

Let $\epsilon>0$ be arbitrary. For each $i<\omega$ fix finite sets $S_i$ and $T_i$, together with coefficients $\{u_x:x\in S_i\}\subset [-1,1]$ and $\{v_y:y\in T_i\}\subset [-1,1]$, such that   
\[\mathrm{Var}(\mu)(A_i,B_i)-\frac{\epsilon}{2^i}<\sum_{x\in S_i}\sum_{y \in T_i}\mu(x,y)u_xv_y.\]

Next, given $n<\omega$, let $S=\bigcup_{i=0}^n S_i$ and $T=\bigcup_{i=0}^n T_i$. For each $h\in\Omega$, it is clear that $f_h,g_h\in B_{\ell_\infty(K)}$, hence by the definition of $\mathrm{Var}(\mu)(K,K)$,
\[
\sum_{x\in S}\sum_{y\in T} \mu(x,y)\, (f_h(x)u_x)\,(g_h(y)v_y) \le \mathrm{Var}(\mu)(K,K).
\]
Taking expectation with respect to $\mathbb{P}$ yields
\[
\sum_{i=0}^n\left(\sum_{x\in S_i}\sum_{y\in T_i} \mu(x,y)u_x v_y\right)=\sum_{x\in S}\sum_{y\in T} \mu(x,y)\,\mathbb{E}\big[f_h(x)g_h(y)\big] u_x v_y
\le \mathrm{Var}(\mu)(K,K).
\]
Therefore, for each $n<\omega$ we have
\[
\sum_{i=0}^n \mathrm{Var}(\mu)(A_i,B_i)\le \mathrm{Var}(\mu)(K,K)+\epsilon.
\]
Since $\epsilon>0$ is arbitrary, the conclusion follows.
\end{proof}

From Lemma \ref{Lem:RegularityAux1} we cal also deduce the following fact, which will be relevant at a later stage of this work.

\begin{proposition}\label{Prop:CountableSupportFrechetMeasures}
Let $K$ be a scattered compact Hausdorff space. If $\mu \in \mathcal{F}(K,K)$, then $\mu$ has countable support, that is, $\mathrm{Supp}(\mu) = \{(x,y)\in K\times K : \mu(x,y)\neq 0\}$ is countable.
\end{proposition}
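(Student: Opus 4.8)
The plan is to show that for each $\epsilon>0$ the level set $P_\epsilon=\{(x,y)\in K\times K:|\mu(x,y)|>\epsilon\}$ is finite; since $\mathrm{Supp}(\mu)=\bigcup_{k<\omega}P_{1/(k+1)}$, this exhibits the support as a countable union of finite sets and yields the conclusion. The entire argument will rest on the finiteness of $\mathrm{Var}(\mu)(K,K)$, which we already have from the bound $\mathrm{Var}(\mu)(U,V)\le\|\mu\|_{\mathcal{F}_2}<\infty$, together with the summability estimate of Lemma~\ref{Lem:RegularityAux1}. As a preliminary observation, I would note that $\mathrm{Var}(\mu)(\{x\},\{y\})=|\mu(x,y)|$, since the supremum defining the variation over the single pair $(\{x\},\{y\})$ reduces to $\sup_{u,v\in[-1,1]}\mu(x,y)\,uv=|\mu(x,y)|$.

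Next I would record that every ``row'' and ``column'' of $P_\epsilon$ is finite. Indeed, for a fixed $x\in K$ the family $(\mu(x,y))_{y\in K}$ lies in $\ell_1(K)$ — as observed in the proof of Theorem~\ref{Thm:RieszBilinearScattered}, because the marginal $E\mapsto\mu(\{x\},E)$ is an atomic measure — so only finitely many $y$ satisfy $|\mu(x,y)|>\epsilon$; the symmetric statement holds for each fixed $y$. With this in hand, suppose toward a contradiction that $P_\epsilon$ is infinite for some $\epsilon>0$.

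The key step, which I expect to be the main technical obstacle, is to extract from $P_\epsilon$ an infinite sequence of pairs $(x_n,y_n)_{n<\omega}$ satisfying the separation hypotheses of Lemma~\ref{Lem:RegularityAux1}: the $x_n$ pairwise distinct, the $y_n$ pairwise distinct, and $x_n\neq y_m$ whenever $m\neq n$. I would achieve this by a greedy construction. Having chosen $(x_0,y_0),\ldots,(x_{n-1},y_{n-1})$, the finite set $F_n=\{x_0,\ldots,x_{n-1},y_0,\ldots,y_{n-1}\}$ blocks only finitely many pairs of $P_\epsilon$, since every row and every column indexed by an element of $F_n$ is finite by the previous step; as $P_\epsilon$ is infinite, one may therefore select a new pair $(x_n,y_n)\in P_\epsilon$ with $x_n,y_n\notin F_n$. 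This choice guarantees exactly the three required separation properties.

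Finally, setting $A_n=\{x_n\}$ and $B_n=\{y_n\}$ produces sequences meeting the hypotheses of Lemma~\ref{Lem:RegularityAux1}, and hence
\[
\sum_{n<\omega}\mathrm{Var}(\mu)(A_n,B_n)=\sum_{n<\omega}|\mu(x_n,y_n)|\ge\sum_{n<\omega}\epsilon=\infty,
\]
which contradicts $\sum_{n<\omega}\mathrm{Var}(\mu)(A_n,B_n)\le\mathrm{Var}(\mu)(K,K)<\infty$. Therefore each $P_\epsilon$ is finite, and the proposition follows.
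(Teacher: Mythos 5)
Your proof is correct, and it reaches the same final contradiction as the paper --- applying Lemma~\ref{Lem:RegularityAux1} to singletons $A_n=\{x_n\}$, $B_n=\{y_n\}$ with $\mathrm{Var}(\mu)(\{x_n\},\{y_n\})=|\mu(x_n,y_n)|\geq\epsilon$ --- but the route to the separated sequence is genuinely different. The paper starts from an uncountable support, picks any sequence of pairwise distinct pairs with $|\mu(x_n,y_n)|\geq\epsilon$, and invokes the infinite Ramsey theorem with a five-colouring of pairs to rule out repeated coordinates and land on a fully separated subsequence. You instead prove the stronger statement that each level set $P_\epsilon$ is finite, and you replace Ramsey by an elementary greedy extraction: since each row $(\mu(x,\cdot))$ and column $(\mu(\cdot,y))$ lies in $\ell_1(K)$, any finite set of already-used coordinates blocks only finitely many pairs of $P_\epsilon$, so a fresh pair avoiding all previous coordinates can always be chosen. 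Your construction does deliver all three separation conditions of Lemma~\ref{Lem:RegularityAux1} (for $m<n$ you get $x_n\neq y_m$ and $y_n\neq x_m$ simultaneously, and the lemma does not require $x_n\neq y_n$), so nothing is missing. What your version buys is a more elementary combinatorial step and the quantitatively sharper conclusion that $\{(x,y):|\mu(x,y)|>\epsilon\}$ is finite for every $\epsilon>0$; it also sidesteps the paper's slightly terse dismissal of colours 3 and 4, which as written needs a small extra argument (homogeneity in colour 3 actually forces two of the pairs to coincide, rather than being excluded directly by distinctness of the pairs). The only point worth flagging is that your row/column finiteness rests on the marginals $E\mapsto\mu(\{x\},E)$ being atomic, which the paper itself assumes via \cite[Theorem 19.7.7]{Se} for scattered $K$, so you are on the same footing there.
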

\begin{proof}
Suppose, for a contradiction, that $\mathrm{Supp}(\mu)$ is uncountable. Then there exist $\epsilon>0$ and a sequence $\{(x_n,y_n)\}_{n<\omega}$ of pairwise distinct points in $\mathrm{Supp}(\mu)$ such that $|\mu(x_n,y_n)|\ge \epsilon$ for all $n<\omega$.

We define a coloring of the pairs $\{m,n\}$, $m<n$, into five colors as follows:
\[
\begin{array}{ll}
\text{Color 1:} & x_m = x_n \\
\text{Color 2:} & y_m = y_n \\
\text{Color 3:} & x_m = y_n \\
\text{Color 4:} & y_m = x_n \\
\text{Color 5:} & \text{none of the above equalities hold}
\end{array}
\]
By Ramsey's Theorem (infinite version), there exists an infinite set $H\subset \omega$ such that all pairs in $H$ have the same color. Write $H=\{n_k:k<\omega\}$ with $n_1<n_2<\ldots$

If the color were $1$, then all $x_{n_k}$ would coincide, say $x_{n_k}=x$ for every $k$, contradicting the fact that the marginal measure $E \mapsto \mu(\{x\},E)$ is finite. Similarly, if the color were $2$, all $y_{n_k}$ would coincide, leading to the same contradiction for the marginal measure $E \mapsto \mu(E,\{y\})$. The color cannot be $3$, since the points are pairwise distinct, and for the same reason, it cannot be $4$.

Therefore, the only possibility for $H$ is color $5$. Thus, the subsequence $\{(x_{n_k},y_{n_k})\}_{k<\omega}$ satisfies, for $k\neq j$,
\[
x_{n_k}\neq x_{n_j},\qquad y_{n_k}\neq y_{n_j},\qquad x_{n_k}\neq y_{n_j}.
\]
By Lemma \ref{Lem:RegularityAux1}, we then obtain
\[
\sum_{k=1}^\infty \mathrm{Var}(\mu)(\{x_{n_k}\},\{y_{n_k}\}) \le \mathrm{Var}(\mu)(K,K) <\infty,
\]
which is a contradiction since $\mathrm{Var}(\mu)(\{x_{n_k}\},\{y_{n_k}\})=|\mu(x_{n_k},y_{n_k})|\geq \epsilon$ for all $k$.
\end{proof}

\begin{lemma}\label{Lem:RegularityAux2}
Let $K$ be a scattered compact Hausdorff space and let $\mu \in \mathcal{F}(K,K)$. 
For every $\epsilon>0$, there exists a finite set $I_0 \subset K$ such that $\mathrm{Var}(\mu)(K\setminus I,K\setminus I)\leq \epsilon$ for every finite set $I \supset I_0$.
\end{lemma}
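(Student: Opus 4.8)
The plan is to first reduce the statement to the construction of a single finite set, using the monotonicity of $\mathrm{Var}(\mu)$. Observe that if $I \supset I_0$ is finite, then $K\setminus I \subset K\setminus I_0$, whence $\mathrm{Var}(\mu)(K\setminus I, K\setminus I) \le \mathrm{Var}(\mu)(K\setminus I_0, K\setminus I_0)$ by monotonicity. Therefore it suffices to produce a finite set $I_0$ with $\mathrm{Var}(\mu)(K\setminus I_0, K\setminus I_0) \le \epsilon$; the asserted bound for every finite $I \supset I_0$ then follows automatically, and no separate argument is needed for the larger sets.

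Next I would argue by contradiction, assuming that no such $I_0$ exists, i.e. that $\mathrm{Var}(\mu)(K\setminus I, K\setminus I) > \epsilon$ for \emph{every} finite $I \subset K$. Under this assumption I construct recursively finite sets $A_n,B_n \subset K$ together with an increasing chain of finite sets $I_n$. Start with $I_0=\varnothing$. Given the finite set $I_n$, the hypothesis gives $\mathrm{Var}(\mu)(K\setminus I_n, K\setminus I_n) > \epsilon$, so by the definition of the total variation there exist finite sets $A_n, B_n \subset K\setminus I_n$ and coefficients $u_x,v_y \in [-1,1]$ with $\sum_{x\in A_n}\sum_{y\in B_n}\mu(x,y)\,u_x v_y > \epsilon$; in particular $\mathrm{Var}(\mu)(A_n,B_n) > \epsilon$. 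I then set $I_{n+1} = I_n \cup A_n \cup B_n$, which is again finite, closing the recursion.

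The crux is to check that $\{A_n\}_{n<\omega}$ and $\{B_n\}_{n<\omega}$ satisfy the disjointness hypotheses of Lemma~\ref{Lem:RegularityAux1}, and this is exactly what the bookkeeping via $I_n$ secures. Indeed, for $m<n$ the sets $A_m,B_m$ lie in $I_n$, while $A_n,B_n \subset K\setminus I_n$; and for $m>n$ we have $A_m,B_m \subset K\setminus I_m \subset K\setminus I_{n+1}$, which is disjoint from $A_n\cup B_n \subset I_{n+1}$. Hence $A_n\cap A_m = B_n\cap B_m = \varnothing$ for $n\neq m$ and $A_n\cap B_m=\varnothing$ for $n\neq m$, precisely as the lemma requires. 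Applying Lemma~\ref{Lem:RegularityAux1} gives $\sum_{n<\omega}\mathrm{Var}(\mu)(A_n,B_n) \le \mathrm{Var}(\mu)(K,K) < \infty$, where finiteness comes from $\mathrm{Var}(\mu)(K,K)\le \|\mu\|_{\mathcal{F}_2}<\infty$; but each summand exceeds $\epsilon$, so the series diverges, a contradiction. This contradiction yields the desired $I_0$, completing the proof. The only delicate point is this disjointness accounting: the whole recursion is arranged so that each freshly chosen witnessing pair lives in the complement of everything used so far, which is what makes the $A_n\cap B_m=\varnothing$ $(m\neq n)$ condition hold and lets Lemma~\ref{Lem:RegularityAux1} apply.
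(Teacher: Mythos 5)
Your proof is correct and follows essentially the same route as the paper: negate the statement, recursively extract finite witnessing pairs $A_n,B_n$ from the complement of everything chosen so far, and contradict Lemma~\ref{Lem:RegularityAux1}. Your explicit use of monotonicity to reduce to a single set $I_0$, and your careful verification of the disjointness conditions, only make explicit what the paper's proof leaves implicit.
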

\begin{proof}
Suppose the proposition is false. Then there exists $\mu \in \mathcal{F}(K,K)$ and $\epsilon>0$ such that, for every finite set $I\subset K$, we have
$\mathrm{Var}(\mu)(K\setminus I,K\setminus I)> \epsilon$.

We construct recursively two sequences of finite sets $\{A_n\}_{n<\omega}$ and $\{B_n\}_{n<\omega}$ as follows. 
Let $A_0=B_0=\emptyset$, and suppose that we have already defined $\{A_0,\dots,A_n\}$ and $\{B_0,\dots,B_n\}$ as pairwise disjoint finite sets with $A_i\cap B_j=\emptyset$ whenever $i\neq j$. 
Set 
\[
I = \bigcup_{i=0}^n (A_i \cup B_i).
\]
Since $\mathrm{Var}(\mu)(K\setminus I,K\setminus I)\geq \epsilon$, there exist finite sets $S,T\subset K\setminus I$ and coefficients $\{u_x:x\in S\}\subset [-1,1]$, $\{v_y:y\in T\}\subset [-1,1]$ such that
\[
\epsilon < \sum_{x\in S}\sum_{y\in T} \mu(x,y) u_x v_y \le \mathrm{Var}(\mu)(K\setminus I,K\setminus I).
\]

We then set $A_{n+1}=S$ and $B_{n+1}=T$. By recursion, this constructs sequences $\{A_n\}_{n<\omega}$ and $\{B_n\}_{n<\omega}$ of pairwise disjoint finite sets with $A_i \cap B_j = \emptyset$ for $i\neq j$, and such that $\mathrm{Var}(\mu)(A_n,B_n)>\epsilon$ for all $n<\omega$. 

This contradicts Lemma \ref{Lem:RegularityAux1}, proving the proposition.
\end{proof}

\begin{theorem}\label{Thm:Regularity}
Let $K$ be a scattered compact Hausdorff space and let $\mu \in \mathcal{F}(K,K)$. For every $\epsilon>0$ and for every $a,b\in K$, there exists a finite set $I_0 \subset K$ such that, for every $I\subset K$ containing $I_0$, 
\[
\mathrm{Var}(\mu)(\{a\},K\setminus I)+\mathrm{Var}(\mu)(K\setminus I,\{b\})+\mathrm{Var}(\mu)(K\setminus I,K\setminus I)<\epsilon.
\]
\end{theorem}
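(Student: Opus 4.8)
The plan is to prove the three terms vanish by combining two single-variable regularity estimates with the two-variable estimate furnished by Lemma~\ref{Lem:RegularityAux2}. First I would handle the diagonal term $\mathrm{Var}(\mu)(K\setminus I, K\setminus I)$: by Lemma~\ref{Lem:RegularityAux2}, there is a finite set $I_1\subset K$ such that $\mathrm{Var}(\mu)(K\setminus I, K\setminus I)<\epsilon/3$ for every finite $I\supset I_1$. Since the statement requires control for \emph{all} $I\supset I_0$ (not merely finite ones), I would first verify that $\mathrm{Var}(\mu)(K\setminus I, K\setminus I)$ is monotone decreasing in $I$ and that its value on an arbitrary set is the supremum of its values on finite subsets; this follows directly from the definition of $\mathrm{Var}(\mu)$ in terms of finite sets $S,T$, so the finite-$I$ conclusion of Lemma~\ref{Lem:RegularityAux2} upgrades automatically to arbitrary $I$.

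Next I would treat the two single-variable terms $\mathrm{Var}(\mu)(\{a\}, K\setminus I)$ and $\mathrm{Var}(\mu)(K\setminus I, \{b\})$. Fixing the first coordinate at $a$, the quantity $V\mapsto \mathrm{Var}(\mu)(\{a\}, V)$ behaves like the total variation of the scalar sequence $(\mu(a,y))_{y\in K}$: indeed, taking $S=\{a\}$ in the definition, $\mathrm{Var}(\mu)(\{a\}, V)=\sum_{y\in V}|\mu(a,y)|$, since the optimal choice of the coefficient $v_y$ is $\mathrm{sgn}(\mu(a,y))$ and $u_a=1$. By Theorem~\ref{Thm:RieszBilinearScattered} (via the marginal measure $E\mapsto\mu(\{a\},E)$ being atomic with $\ell_1$ masses), the family $(\mu(a,y))_{y\in K}$ lies in $\ell_1(K)$, so $\sum_{y\in K}|\mu(a,y)|<\infty$. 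Hence there is a finite set $I_2\subset K$ with $\sum_{y\in K\setminus I_2}|\mu(a,y)|<\epsilon/3$, and for every $I\supset I_2$ we get $\mathrm{Var}(\mu)(\{a\}, K\setminus I)<\epsilon/3$. The symmetric argument, using the marginal $E\mapsto\mu(E,\{b\})$, produces a finite set $I_3$ controlling $\mathrm{Var}(\mu)(K\setminus I, \{b\})$.

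Finally I would set $I_0 = I_1\cup I_2\cup I_3$, a finite set. For any $I\supset I_0$, monotonicity of $\mathrm{Var}(\mu)$ in each argument gives all three bounds simultaneously, and summing the three contributions of $\epsilon/3$ yields the desired strict inequality $<\epsilon$. The only mild subtlety, and the step I expect to require the most care, is the passage from the finite-set formulation of Lemma~\ref{Lem:RegularityAux2} to arbitrary (possibly infinite) $I$; this rests on observing that enlarging $I$ only shrinks the complement $K\setminus I$, and that $\mathrm{Var}(\mu)$ is monotone under set inclusion, as recorded in the monotonicity property noted after the definition of $\mathrm{Var}(\mu)$. Everything else is a routine combination of the $\ell_1$-summability of the marginals with the already-established two-variable regularity.
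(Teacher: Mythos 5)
Your proposal is correct and follows essentially the same route as the paper: Lemma~\ref{Lem:RegularityAux2} handles the diagonal term, while the two single-coordinate terms are controlled via the atomic marginal measures $E\mapsto\mu(\{a\},E)$ and $E\mapsto\mu(E,\{b\})$ (equivalently, the $\ell_1$-summability of $(\mu(a,y))_y$ and $(\mu(x,b))_x$), and the final $I_0$ is the union of the three finite sets. Your explicit remark on upgrading the finite-$I$ conclusion of Lemma~\ref{Lem:RegularityAux2} to arbitrary $I\supset I_0$ via monotonicity of $\mathrm{Var}(\mu)$ is a small point the paper passes over silently, but it does not change the argument.
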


\begin{proof}
Let $\epsilon>0$ be arbitrary. By Lemma \ref{Lem:RegularityAux2}, we may fix a finite set $J_0\subset K$ such that, for every  $J \supset J_0$,
\[
\mathrm{Var}(\mu)(K\setminus J,K\setminus J)\leq \frac{\epsilon}{3}.
\]

Given $a,b \in K$, let $\mu_a$ and $\mu^b$ denote the marginal measures in $\mathcal{M}(K)$ defined by $\mu_a(E)=\mu(\{a\},E)$ and $\mu^b(E)=\mu(E,\{b\})$ for every $E\in \mathfrak{B}(K)$, with total variation (in the usual sense of Radon measures) denoted by $|\mu_a|$ and $|\mu^b|$ respectively. Since $\mu_a$ and $\mu^b$ are atomic measures, there exist finite sets $J_1, J_2 \subset K$ such that, for every  $J\supset (J_1\cup J_2)$,
\[
\mathrm{Var}(\{a\},K\setminus J)=|\mu_a|(K\setminus J)<\frac{\epsilon}{3} \quad \text{and} \quad \mathrm{Var}(K\setminus J,\{b\})=|\mu^b|(K\setminus J)<\frac{\epsilon}{3}.
\]

The claim follows by taking $I_0 = J_0 \cup J_1 \cup J_2$.
\end{proof}

\section{Dismantling of Bilinear Operators}
\label{Sec:Decomposition}

In this section, we always consider $L$ to be a collapsing space (see Definition~\ref{Def:Collapsing}), introduced in Section~\ref{Sec:Basic}, and let $K = L \cup \{\infty\}$ denote its one-point compactification. Our aim is to establish, via a suitable characterization of bilinear operators $G:C_0(L)\times C_0(L)\to C_0(L)$, a series of results that will allow us to dismantle these operators by exploiting the exotic properties of $L$. We begin by introducing the main object of this section.

\begin{definition}\label{Def:SpiritFunction}
We say that a mapping $\nu:K \to \mathcal{F}_0(K,K)$, written for simplicity as $\nu(t) = \nu_t$, is a weak$^*$-continuous function vanishing at infinity if, for every $f,g \in C_0(L)$, the formula
\[
t \mapsto \int f\otimes g\, d\nu_t \;=\; \sum_{x \in K} \Biggl( \sum_{y \in K} \nu_t(x,y)\, g(y) \Biggr) f(x)
\]
defines an element of $C_0(L)$.
\end{definition}

This terminology is due to the fact that $\mathcal{F}_0(K,K)$, which is isometrically isomorphic to the space of bilinear forms on $C_0(L)\times C_0(L)$ by Theorem~\ref{Thm:RieszBilinearScattered}, is the topological dual of the projective tensor product $C_0(L)\,\hat{\otimes}_{\pi}\, C_0(L)$ (see \cite[Theorem~2.9]{Ryan}).  

The relevance of such functions in our research stems from the following characterization:

\begin{theorem}\label{Thm:Representation}
Every weak$^*$-continuous function $\nu:K \to \mathcal{F}_0(K,K)$ vanishing at infinity defines a bilinear operator $G : C_0(L) \times C_0(L) \longrightarrow C_0(L)$ through the formula
\begin{equation}\label{Rel:Formula}G(f,g)(t) \;=\; \sum_{x \in K} \Biggl( \sum_{y \in K} \nu_t(x,y)\, g(y) \Biggr) f(x) 
\quad\text{for all } f,g \in C_0(L), \; t \in K.
\end{equation}
Conversely, every bilinear operator $G:C_0(L)\times C_0(L)\to C_0(L)$ admits a unique weak$^*$-continuous function $\nu:K \to \mathcal{F}_0(K,K)$ satisfying the relation above. In either case, the operator norm is given by
\[\|G\| = \sup_{t \in K} \mathrm{Var}(\nu_t)(K,K).\]
\end{theorem}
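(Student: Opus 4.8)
The plan is to prove the two implications separately and then read off the norm identity from the estimates used along the way. Throughout I write $G_t$ for the bilinear form $(f,g)\mapsto G(f,g)(t)$ obtained by composing $G$ with evaluation at $t\in K$, noting that evaluation at $t$ is a bounded functional on $C_0(L)\subset C(K)$ and that $G_\infty=0$ on $C_0(L)$. For the converse direction (that every bilinear operator arises from some $\nu$), I would start from a bounded bilinear operator $G$ and observe that each $G_t$ is a bounded bilinear form with $\|G_t\|\le\|G\|$, since $|G_t(f,g)|=|G(f,g)(t)|\le\|G(f,g)\|_\infty\le\|G\|\,\|f\|\,\|g\|$. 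Applying Theorem~\ref{Thm:RieszBilinearScattered} together with Remark~\ref{Rem:C_0(L)case} to each $G_t$ yields a unique $\nu_t\in\mathcal{F}_0(K,K)$ representing $G_t$ via \eqref{Rel:Formula}, with $\mathrm{Var}(\nu_t)(K,K)=\|G_t\|$. This defines a map $\nu:K\to\mathcal{F}_0(K,K)$, and the condition of Definition~\ref{Def:SpiritFunction} is then automatic: for each $f,g$ the function $t\mapsto\int f\otimes g\,d\nu_t$ is literally $G(f,g)$, which lies in $C_0(L)$ by hypothesis (in particular it vanishes at $\infty$, forcing $\nu_\infty=0$). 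Uniqueness of $\nu$ is inherited pointwise from the uniqueness clause of Theorem~\ref{Thm:RieszBilinearScattered}, since two representing families must agree at every $t$.

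For the forward direction I must verify that the pointwise formula \eqref{Rel:Formula} genuinely defines a \emph{bounded} operator into $C_0(L)$. That the values lie in $C_0(L)$, and that $G$ is bilinear, are immediate from Definition~\ref{Def:SpiritFunction} and the bilinearity of each form $G_t$ supplied by Theorem~\ref{Thm:RieszBilinearScattered}. The substantive point, and what I expect to be the main obstacle, is passing from separate to joint continuity; here I would use a double application of the Banach--Steinhaus theorem, relying on the completeness of $C_0(L)$. Fixing $g$, the functionals $\{G_t(\cdot,g)\}_{t\in K}$ are each bounded (their norms are controlled by $\mathrm{Var}(\nu_t)(K,K)\|g\|$ via Theorem~\ref{Thm:RieszBilinearScattered}) and pointwise bounded, because $\sup_t|G_t(f,g)|=\|G(f,g)\|_\infty<\infty$; hence $\sup_t\|G_t(\cdot,g)\|<\infty$, so $f\mapsto G(f,g)$ is bounded. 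Running the same argument in the variable $f$ shows $g\mapsto G(f,g)$ is bounded for each $f$, so the family of operators $\{f\mapsto G(f,g):\|g\|\le1\}$ is pointwise bounded on $C_0(L)$; a second application of Banach--Steinhaus produces a uniform bound $M$, whence $\|G(f,g)\|_\infty\le M\|f\|\,\|g\|$ and $G$ is a bounded bilinear operator.

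Finally, I would obtain the norm identity by pinning two inequalities. From $|G(f,g)(t)|\le\mathrm{Var}(\nu_t)(K,K)\|f\|\,\|g\|$, which is exactly the bound furnished by Theorem~\ref{Thm:RieszBilinearScattered} for $G_t$, taking the supremum over $t$ and over the unit balls gives $\|G\|\le\sup_t\mathrm{Var}(\nu_t)(K,K)$. Conversely, for each fixed $t$ one has $\mathrm{Var}(\nu_t)(K,K)=\|G_t\|=\sup_{\|f\|,\|g\|\le1}|G(f,g)(t)|\le\sup_{\|f\|,\|g\|\le1}\|G(f,g)\|_\infty=\|G\|$, so $\sup_t\mathrm{Var}(\nu_t)(K,K)\le\|G\|$. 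Combining the two yields $\|G\|=\sup_t\mathrm{Var}(\nu_t)(K,K)$; in particular this supremum is finite, which is consistent with, and re-confirms, the boundedness established in the forward direction.
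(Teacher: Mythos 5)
Your proposal is correct and follows essentially the same route as the paper: both directions reduce to applying Theorem~\ref{Thm:RieszBilinearScattered} (with Remark~\ref{Rem:C_0(L)case}) to the slices $G_t$, with boundedness in the forward direction obtained from repeated use of Banach--Steinhaus and the norm identity read off from $\|G\|=\sup_t\|G_t\|=\sup_t\mathrm{Var}(\nu_t)(K,K)$. You merely spell out the uniform-boundedness step and the two norm inequalities more explicitly than the paper does.
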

\begin{proof}
If $\nu:K\to \mathcal{F}_0(K,K)$ is weak$^*$-continuous function vanishing at infinity, then the formula \ref{Rel:Formula} defines a bilinear mapping $G:C_0(L)\times C_0(L)\to C_0(L)$. According to the Banach-Steinhaus theorem (applied twice), we have $\|G\|<\infty$. For each $t \in K$, the mapping $G_t:C_0(L)\times C_0(L)\to \mathbb{R}$ given by $G_t(f,g)=G(f,g)(t)$ is a bilinear form on $C_0(L)$. By the uniqueness of the representation in Theorem \ref{Thm:RieszBilinearScattered}, we obtain $\|G_t\|=\mathrm{Var}(\nu_t)(K,K)$ and therefore
\[\|G\|=\sup_{t \in K}\|G_t\|=\sup_{t \in K}\mathrm{Var}(\nu_t)(K,K).\]

Conversely, if $G:C_0(L)\times C_0(L)\to C_0(L)$ is a bilinear operator, then $G_t(f,g)=G(f,g)(t)$ defines a bilinear form on $C_0(L)$. By Theorem \ref{Thm:RieszBilinearScattered} (see also Remark \ref{Rem:C_0(L)case}), there exists a unique $\nu_t\in\mathcal{F}_0(K,K)$ such that 
\[G(f,g)(t) \;=\; \sum_{x \in K} \Biggl( \sum_{y \in K} \nu_t(x,y)\, g(y) \Biggr) f(x).\] 
Since $G(f,g)\in C_0(L)$ for each $f,g\in C_0(L)$, it follows that $\nu:K\to \mathcal{F}_0(K,K)$ is weak$^*$-continuous function vanishing at infinity, as in Definition \ref{Def:SpiritFunction}. Moreover, by the second part of Theorem \ref{Thm:RieszBilinearScattered} we obtain 
\[\|G\|=\sup_{t \in K}\|G_t\|=\sup_{t \in K}\mathrm{Var}(\nu_t)(K,K).\]
\end{proof}

The following proposition was inspired by \cite[Theorem 4.4]{candido2}.

\begin{proposition}\label{Thm:Vanish-A} 
Let $A,B\subset L$ be countable subsets of $L$. If $\nu:K\to \mathcal{F}_0(K,K)$ denotes a weak$^*$-continuous function vanishing at infinity, then there exists $\rho<\omega_1$ such that $\mathrm{Var}(\nu_t)(A,B)=0$ for every $t\in K\setminus L_\rho$.
\end{proposition}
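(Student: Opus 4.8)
The plan is to prove the sharper statement that the set $Z=\{t\in L:\mathrm{Var}(\nu_t)(A,B)>0\}$ is countable; since $\varphi$ is finite-to-one and $\mathrm{cf}(\omega_1)=\omega_1$, a countable $Z$ is bounded, so $Z\subset L_\rho$ for $\rho=\sup\varphi[Z]<\omega_1$, and then $\mathrm{Var}(\nu_t)(A,B)=0$ for every $t\in K\setminus L_\rho$ (using $\nu_\infty=0$). First I would reduce the two sets to one: as $A\cup B$ is countable it lies in some $L_\eta$, and by monotonicity of $\mathrm{Var}(\nu_t)$ it suffices to treat $A=B=L_\eta$, a countable clopen set. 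Assume for contradiction that $Z$ is uncountable; splitting $Z$ according to $\mathrm{Var}(\nu_t)(L_\eta,L_\eta)>1/n$ and passing to an uncountable piece, I obtain $\epsilon>0$ and an uncountable family $\{t_\alpha\}_{\alpha<\omega_1}$ with $\varphi(t_\alpha)$ strictly increasing---so that $t_\alpha\to\infty$ in $K$---and $\mathrm{Var}(\nu_{t_\alpha})(L_\eta,L_\eta)>\epsilon$ for all $\alpha$.

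Next I would uniformize the witnesses. Let $G$ be the bilinear operator associated with $\nu$ by Theorem~\ref{Thm:Representation} and put $M:=\|G\|=\sup_t\mathrm{Var}(\nu_t)(K,K)<\infty$. For each $\alpha$ fix finite $S_\alpha,T_\alpha\subset L_\eta$ and coefficients in $[-1,1]$ realizing the variation up to $\epsilon$. Since $L_\eta$ is countable it has only countably many finite subsets, so uncountably many $\alpha$ share a common pair $(S_0,T_0)$; the corresponding coefficient vectors lie in the compact metrizable cube $[-1,1]^{S_0}\times[-1,1]^{T_0}$, so after a further refinement they cluster at a single $(u,v)$, giving an uncountable $\Gamma$ with $\sum_{x\in S_0,y\in T_0}\nu_{t_\alpha}(x,y)u_xv_y>\epsilon/2$ for $\alpha\in\Gamma$. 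Now regard $\alpha\mapsto(\nu_{t_\alpha}(x,y))_{(x,y)\in L_\eta\times L_\eta}$ as a map into the compact metrizable cube $[-M,M]^{L_\eta\times L_\eta}$; a Lindel\"of/condensation argument produces a point $c=(c_{x,y})$ every neighbourhood of which captures uncountably many $\alpha\in\Gamma$. Evaluating on the finite set $S_0\times T_0$ gives $\sum_{S_0\times T_0}c_{x,y}u_xv_y\ge\epsilon/2$, so $c\ne0$; and $\mathrm{Var}(c)(\cdot,\cdot)\le M$ on every finite grid, so $c$ has finite Fr\'echet variation and, via the atomic representation of Theorem~\ref{Thm:RieszBilinearScattered} on the one-point compactification $K_\eta=L_\eta\cup\{\infty\}$ of $L_\eta$, defines a genuine measure $c\in\mathcal{F}_0(K_\eta,K_\eta)$.

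The contradiction would then come from continuity. Because $c\ne0$, uniqueness in Theorem~\ref{Thm:RieszBilinearScattered} provides continuous $f_0,g_0$, supported in $L_\eta$ and hence lying in $C_0(L)$, with $\int f_0\otimes g_0\,dc=\sum_{x,y}c_{x,y}f_0(x)g_0(y)\ne0$. On the one hand, weak$^*$-continuity of $\nu$ together with $t_\alpha\to\infty$ forces $G(f_0,g_0)(t_\alpha)\to G(f_0,g_0)(\infty)=0$. On the other hand, choosing along the condensation a subfamily on which $\nu_{t_\alpha}$ is as close to $c$ as desired on any prescribed finite set, I would show that $G(f_0,g_0)(t_\alpha)=\sum_{x,y\in L_\eta}\nu_{t_\alpha}(x,y)f_0(x)g_0(y)$ converges to $\int f_0\otimes g_0\,dc\ne0$, the desired contradiction.

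The crux---and the step I expect to be the main obstacle---is justifying this last interchange of $\lim_\alpha$ with the infinite sum over $L_\eta\times L_\eta$; equivalently, establishing a uniform (equi-)regularity of the family $\{\nu_{t_\alpha}\}$ on the tail of $L_\eta$. Splitting the sum at a finite $F\subset L_\eta$, the part over $F\times F$ is controlled by the condensation, and the $c$-parts of the tail are small by Lemma~\ref{Lem:RegularityAux2} and Theorem~\ref{Thm:Regularity} applied to the \emph{single} measure $c$; the genuine difficulty is the tail of the $\nu_{t_\alpha}$-terms. Here I would argue by contradiction: were there $\delta>0$ such that, for \emph{every} finite $F$, uncountably many $\alpha$ satisfied $\mathrm{Var}(\nu_{t_\alpha})(L_\eta\setminus F,L_\eta\setminus F)\ge\delta$ (and likewise for the mixed marginals $\mathrm{Var}(\nu_{t_\alpha})(\{a\},L_\eta\setminus F)$), then the same pigeonhole over the countable set $L_\eta$ that produced $(S_0,T_0,u,v)$ would produce, for each $F$, a \emph{fixed} finite witness inside $L_\eta\setminus F$; passing to the condensation point this would force $\mathrm{Var}(c)(L_\eta\setminus F,L_\eta\setminus F)\ge\delta/2$ for all finite $F$, contradicting the regularity of the genuine Fr\'echet measure $c$ supplied by Lemma~\ref{Lem:RegularityAux2}. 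I note that this argument uses only that $L\in\mathcal{S}$ (finite-to-one $\varphi$ and countable clopen $L_\eta$) together with weak$^*$-continuity and the $F_2$-variation estimates of Section~\ref{Se:Frechet}, and not the full collapsing property, which should enter only the later large-against-large analysis.
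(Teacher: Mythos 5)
Your overall strategy (condensation point $c$ in the cube $[-M,M]^{L_\eta\times L_\eta}$, then a limit--sum interchange contradicting $G(f_0,g_0)(t_\alpha)\to 0$) could in principle work, but the step you yourself flag as the crux has a genuine gap as written, and it is not a cosmetic one. First, the negation you argue from is too weak: the statement you need is ``there exist a finite $F$ and an \emph{uncountable} subfamily on which all tail variations are $<\delta$,'' whose negation is ``for every finite $F$, all but \emph{countably} many $\alpha$ have tail $\ge\delta$'' --- not merely ``uncountably many.'' Second, and more seriously, the sentence ``passing to the condensation point this would force $\mathrm{Var}(c)(L_\eta\setminus F,L_\eta\setminus F)\ge\delta/2$'' does not follow: for each $F$ the pigeonhole produces a fixed witness only on some uncountable subfamily $\Gamma_F\subset\Gamma$, and $c$, which was chosen as a condensation point of the original family $\Gamma$, need not be a condensation point of $\Gamma_F$; so you cannot transfer the lower bound $\delta/2$ to $c$. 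Finally, there is a circularity you do not address: the tolerance $\delta$ must be chosen in terms of $\bigl|\int f_0\otimes g_0\,dc\bigr|$, i.e.\ in terms of $c$, but extracting the equi-regular subfamily may force you to replace $c$ by a condensation point of the new subfamily, which changes $f_0,g_0$ and hence $\delta$. (The equi-regularity you need is in fact true --- from the correct, co-countable negation one finds a single $\alpha$ bad for an increasing exhaustion $F_n\uparrow L_\eta$, contradicting Theorem~\ref{Thm:Regularity} for the single measure $\nu_{t_\alpha}$ --- but your argument is not the one that proves it, and the quantifier order still has to be untangled.)

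The paper's proof shows that none of this machinery is needed, precisely because $\mathrm{Var}(\nu_t)(A,B)>0$ if and only if $\nu_t(a,b)\neq 0$ for some $(a,b)$ in the \emph{countable} set $A\times B$. A pigeonhole over $A\times B\times\omega$ immediately yields a single pair $(a,b)$ and $\epsilon>0$ with $|\nu_t(a,b)|>\epsilon$ on an uncountable set of $t$'s. Then, using only first countability of $L$ and regularity of the \emph{marginal} Radon measures $E\mapsto\nu_t(\{a\},E)$ and $E\mapsto\nu_t(E,V)$, one selects (and pigeonholes over countable local bases to fix) clopen compact $U\ni a$, $V\ni b$ with $|\nu_t(U,V)|\ge\epsilon/2$ on an uncountable set, contradicting the fact that $t\mapsto\nu_t(U,V)=\int\chi_U\otimes\chi_V\,d\nu_t$ lies in $C_0(L)$ and hence has countable support. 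This avoids limits of Fr\'echet measures and any interchange of limit and infinite sum. I would recommend you adopt that reduction to a single atom; it replaces the entire second half of your argument.
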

\begin{proof}
Suppose the proposition is false. Then, since $A$ and $B$ are countable and $L$ is uncountable, there exist $\epsilon>0$ and $(a,b)\in A\times B$ such that $|\nu_t(a,b)|>\epsilon$ for each $t$ in an uncountable set $\varLambda\subset L$. 
Since $L$ is a first-countable, locally compact, scattered space, we may fix countable local bases $\mathcal{U}_a=\{U_n : n<\omega\}$ and $\mathcal{V}_b=\{V_m : m <\omega\}$ consisting of compact clopen neighborhoods of $a$ and $b$, respectively.

For each $t \in \varLambda$, from the regularity of the measure $E\mapsto \nu_t(\{a\},E)$, we may select $V_t\in \mathcal{V}_b$ such that 
$|\nu_t(\{a\}, V_t\setminus \{b\})| < \frac{\epsilon}{4}$. Then by the regularity of the measure $E\mapsto \nu_t(E,V_t)$ we may select  $U_t\in \mathcal{U}_a$ such that $|\nu_t(U_t\setminus \{a\}, V_t)|<\frac{\epsilon}{4}$. Since $\mathcal{U}_a$ and $\mathcal{V}_b$ are countable, there exists an uncountable subset $\varLambda^\prime\subset \varLambda$ such that $U_t=U$ and $V_t=V$ for all $t\in \varLambda^\prime$.

For each $t \in \varLambda^\prime$ we have
\begin{align*}
|\nu_t(U,V)|&= |\nu_t(a,b) + \nu_t(U\setminus \{a\}, V) + \nu_t(\{a\}, V\setminus \{b\})| \\
&\geq |\nu_t(a,b)| - |\nu_t(U\setminus \{a\}, V)| - |\nu_t(\{a\}, V\setminus \{b\})| \\
&\geq \epsilon - \frac{\epsilon}{2} = \frac{\epsilon}{2}.
\end{align*}
This is a contradiction, since $t\mapsto \nu_t(U,V)=\int \chi_U\otimes \chi_V\, d\nu_t$ belongs to $C_0(L)$ and therefore must have countable support.
\end{proof}

\begin{proposition}\label{Prop:SomaSegmentada}
Let $(a_w)_{w\in L}$ be an element of $\ell_\infty(L)$ possessing the following property: for every $\rho<\omega_1$, $(a_w)_{w\in L_\rho}$ is an element of $\ell_1(L_\rho)$. Then, $(a_w)_{w\in L}$ belongs to $\ell_1(L)$.
\end{proposition}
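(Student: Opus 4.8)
The plan is to reduce the statement to a single monotone real-valued function on $\omega_1$ and then exploit the uncountable cofinality of $\omega_1$. Concretely, for each $\rho<\omega_1$ set $S_\rho=\sum_{w\in L_\rho}|a_w|$; this is finite by hypothesis, since $(a_w)_{w\in L_\rho}\in\ell_1(L_\rho)$. Because the family $\{L_\alpha:\alpha<\omega_1\}$ is increasing (recall $L_\alpha\subset L_\beta$ whenever $\alpha\le\beta$) and the summands $|a_w|$ are nonnegative, the function $\rho\mapsto S_\rho$ is non-decreasing. The whole proposition will follow once I show that $M:=\sup_{\rho<\omega_1}S_\rho$ is finite, for then every finite subset of $L$ will have an $\ell_1$-sum bounded by $M$.

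The core of the argument is the claim that $M<\infty$, which I would prove by contradiction. If $M=\infty$, then for every $n<\omega$ I may choose $\rho_n<\omega_1$ with $S_{\rho_n}>n$. Setting $\rho^{*}=\sup_{n<\omega}\rho_n$, this is a countable supremum of countable ordinals, hence $\rho^{*}<\omega_1$. By monotonicity $S_{\rho^{*}}\ge S_{\rho_n}>n$ for every $n$, which forces $S_{\rho^{*}}=\infty$ and contradicts the hypothesis that $(a_w)_{w\in L_{\rho^{*}}}\in\ell_1(L_{\rho^{*}})$. Thus $M<\infty$.

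It then remains to transfer this uniform bound from the pieces $L_\rho$ to all of $L$. By definition, $\sum_{w\in L}|a_w|$ is the supremum of $\sum_{w\in F}|a_w|$ taken over finite sets $F\subset L$. Since $\{L_\alpha:\alpha<\omega_1\}$ covers $L$, each point of $L$ lies in some $L_\alpha$, so any finite $F$ is contained in a single $L_\rho$ (take $\rho$ to be the largest of the finitely many indices). Consequently $\sum_{w\in F}|a_w|\le S_\rho\le M$, and taking the supremum over all finite $F$ gives $\sum_{w\in L}|a_w|\le M<\infty$, i.e. $(a_w)_{w\in L}\in\ell_1(L)$.

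There is no genuinely hard step here; the one point that must not be overlooked is the regularity of $\omega_1$ (that a countable set of countable ordinals is bounded below $\omega_1$), which is precisely the mechanism converting the per-$\rho$ finiteness hypothesis into the uniform bound $M$. I also note that the standing assumption $(a_w)_{w\in L}\in\ell_\infty(L)$ is not actually needed for this implication, since once the conclusion $(a_w)_{w\in L}\in\ell_1(L)$ is established, boundedness is automatic.
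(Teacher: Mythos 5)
Your proof is correct and is essentially the paper's argument: both rest on the regularity of $\omega_1$, collecting a countable witness of unboundedness (your ordinals $\rho_n$, the paper's finite sets $F_n$ with $\sum_{w\in F_n}|a_w|\ge n$) inside a single $L_\rho$ and contradicting $(a_w)_{w\in L_\rho}\in\ell_1(L_\rho)$. Your detour through the monotone function $\rho\mapsto S_\rho$ and the uniform bound $M$ is only a cosmetic repackaging of the same idea, and your closing remark that the $\ell_\infty$ hypothesis is not needed is accurate.
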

\begin{proof}
Assume, towards a contradiction, that $(a_w)_{w\in L} \notin \ell_1(L)$. Then, for each $n<\omega$, there exists a finite set $F_n \subset L$ such that $\sum_{w\in F_n} |a_w| \geq n$. Let $\rho < \omega_1$ be such that $\bigcup_{n<\omega} F_n \subset L_\rho$. It follows that $\sup\Bigl\{ \sum_{w\in F} |a_w| : F \subset L_\rho \text{ finite} \Bigr\} = \infty$, which contradicts the assumption that $(a_w)_{w\in L_\rho} \in \ell_1(L_\rho)$.
\end{proof}

The next result was inspired by \cite[Lemma 5.9]{CandidoC(KxK)} and \cite[Theorem 4.8]{candido2}.

\begin{theorem}\label{Thm:Vanish-B}
Let $\nu:K\to \mathcal{F}_0(K\times K)$ be a weak$^*$-continuous function vanishing at infinity. Then there exist $(a_z)_{z},(b_z)_{z}\in \ell_1(L)$ such that, for each $z\in L$, there exists $\gamma_z<\omega_1$ with the property that, for all $x,y\in L\setminus L_{\gamma_z}$,
\[
\nu_{x}(z,y)=
\begin{cases}
a_z, & \text{if } x = y, \\
0,   & \text{if } x \neq y,
\end{cases}
\qquad
\nu_{x}(y,z)=
\begin{cases}
b_z, & \text{if } x = y, \\
0,   & \text{if } x \neq y.
\end{cases}
\]
\end{theorem}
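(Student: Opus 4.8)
The plan is to prove the two identities separately for each fixed $z\in L$, reducing each to the linear statement Proposition~\ref{Thm:AuxiliarCallapse2} by ``fattening'' the point $z$ to a shrinking sequence of compact clopen neighborhoods. The naive attempt — form the marginal $\eta_t:=\nu_t(\{z\},\cdot)\in\mathcal{M}_0(K)$ and apply Proposition~\ref{Thm:AuxiliarCallapse2} directly — fails, because $\chi_{\{z\}}$ need not belong to $C_0(L)$ when $z$ is non-isolated, and indeed $t\mapsto\nu_t(\{z\},\cdot)$ is in general \emph{not} weak$^*$-continuous (already for $\nu_t=\delta_t\otimes\delta_t$, i.e. $M(f,g)=fg$, the slice $t\mapsto\nu_t(\{z\},\{t\})$ is discontinuous at a non-isolated $z$). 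This is the conceptual obstacle; the remedy is to test only against indicators of compact clopen sets, which do lie in $C_0(L)$.

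First I fix $z$ and choose a decreasing sequence $U_0\supseteq U_1\supseteq\cdots$ of compact clopen neighborhoods of $z$ with $\bigcap_n U_n=\{z\}$, which exists since $L$ is locally compact, scattered (hence totally disconnected) and first-countable. For each $n$ the map $\eta^n:K\to\mathcal{M}_0(K)$, $\eta^n_t(E)=\nu_t(U_n,E)$, is weak$^*$-continuous vanishing at infinity: for $g\in C_0(L)$ we have $\int g\,d\eta^n_t=\sum_y\nu_t(U_n,y)g(y)=G(\chi_{U_n},g)(t)\in C_0(L)$ because $\chi_{U_n}\in C_0(L)$, while $\eta^n_t(\{\infty\})=0$ as $\nu_t\in\mathcal{F}_0(K,K)$. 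Applying Proposition~\ref{Thm:AuxiliarCallapse2} to each $\eta^n$ produces $r_n\in\mathbb{R}$ and $\rho_n<\omega_1$ with
\[
\nu_x(U_n,y)=\eta^n_x(\{y\})=\begin{cases} r_n,& x=y,\\ 0,& x\neq y,\end{cases}\qquad\text{for all } x,y\in L\setminus L_{\rho_n}.
\]

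The decisive point is that $\rho_\infty:=\sup_{n<\omega}\rho_n$ is a countable supremum of countable ordinals, so $\rho_\infty<\omega_1$. Fix any $x,y\in L\setminus L_{\rho_\infty}$; then $x,y\notin L_{\rho_n}$ for \emph{every} $n$, whence $\nu_x(U_n,y)=r_n[x=y]$ for all $n$. Since $E\mapsto\nu_x(E,\{y\})$ is a finite signed measure and $U_n\downarrow\{z\}$, continuity from above gives $\nu_x(U_n,y)\to\nu_x(\{z\},y)$; taking $x=y$ shows $r_n=\nu_x(U_n,x)$ converges, so $a_z:=\lim_n r_n$ exists independently of the diagonal point, and letting $n\to\infty$ yields $\nu_x(z,x)=a_z$ and $\nu_x(z,y)=0$ for $x\neq y$, for all $x,y\in L\setminus L_{\rho_\infty}$. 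This is the first identity with $\gamma_z^{(1)}:=\rho_\infty$. The second is obtained symmetrically, fattening $z$ in the first coordinate: put $\zeta^n_t(E)=\nu_t(E,U_n)$, note $\int f\,d\zeta^n_t=G(f,\chi_{U_n})(t)\in C_0(L)$, apply Proposition~\ref{Thm:AuxiliarCallapse2} to get $s_n,\sigma_n$, and set $b_z:=\lim_n s_n$, $\gamma_z^{(2)}:=\sup_n\sigma_n$. Finally $\gamma_z:=\max(\gamma_z^{(1)},\gamma_z^{(2)})$.

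It remains to verify $(a_z)_z,(b_z)_z\in\ell_1(L)$ via Proposition~\ref{Prop:SomaSegmentada}. Boundedness is immediate: $|a_z|=|\nu_x(z,x)|\le\mathrm{Var}(\nu_x)(K,K)=\|G_x\|\le\|G\|$ for any admissible $x$. For summability on an arbitrary $L_\rho$, which is countable, set $\gamma:=\sup_{z\in L_\rho}\gamma_z<\omega_1$ and fix a single $x\in L\setminus L_\gamma$; then $a_z=\nu_x(\{z\},\{x\})$ holds simultaneously for all $z\in L_\rho$, so
\[
\sum_{z\in L_\rho}|a_z|=\sum_{z\in L_\rho}\big|\nu_x(\{z\},\{x\})\big|\le\big|\nu_x(\cdot,\{x\})\big|(K)\le\|G\|<\infty,
\]
the marginal $E\mapsto\nu_x(E,\{x\})$ being a finite atomic measure. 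Thus $(a_z)_{z\in L_\rho}\in\ell_1(L_\rho)$ for every $\rho$, and Proposition~\ref{Prop:SomaSegmentada} yields $(a_z)_z\in\ell_1(L)$; the same works for $(b_z)_z$. I expect the only genuinely delicate step to be the one isolated above: that the countably many thresholds $\rho_n$ returned by Proposition~\ref{Thm:AuxiliarCallapse2} admit a countable supremum, which is precisely what converts the neighborhood-by-neighborhood information into a single $\gamma_z$ uniform over all large $x,y$.
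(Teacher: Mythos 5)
Your proof is correct and follows essentially the same route as the paper's: fatten $z$ to a shrinking sequence of compact clopen neighborhoods, apply Proposition~\ref{Thm:AuxiliarCallapse2} to each slice $t\mapsto\nu_t(U_n,\cdot)$ (resp.\ $t\mapsto\nu_t(\cdot,U_n)$), take the countable supremum of the resulting thresholds, and pass to the limit using finiteness of the marginal measure $E\mapsto\nu_x(E,\{y\})$ --- the paper phrases this last step as the Cauchy estimate $|s_n-s_m|\le|\nu_x^x|(V_n\setminus\{z\})$, which is the same fact as your continuity from above. Your $\ell_1$ verification via Proposition~\ref{Prop:SomaSegmentada} and the finiteness of a single marginal's total variation is also the paper's argument, just stated without the detour through the bounded linear functional $\varphi(f)=\tilde{G}_t(f,\chi_t)$.
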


\begin{proof}
Let $z \in L$ be arbitrary, and fix a sequence of compact clopen neighborhoods $V_1 \supset V_2 \supset V_3 \supset \ldots$ of $z$ in $L$ such that $\bigcap_{n < \omega} V_n = \{z\}$. For each $n < \omega$, define the functions 
$\nu^{(1,n)}, \nu^{(2,n)}: K \to \mathcal{M}_0(K)$ by
\begin{align*}
\nu^{(1,n)}_{x}(U) &= \nu_x(V_n,U),\\
\nu^{(2,n)}_{x}(U) &= \nu_x(U,V_n).
\end{align*}
These functions are weak$^*$-continuous and vanish at infinity (in the sense presented at Section \ref{Sec:Basic}). We analyze the sequence $\{\nu^{(1,n)}\}_{n<\omega}$ and the arguments for $\{\nu^{(2,n)}\}_{n<\omega}$ are analogous. Since $L$ is a collapsing space, for each $n < \omega$ Theorem \ref{Thm:AuxiliarCallapse2} yields $s_n \in \mathbb{R}$ and $\beta_n < \omega_1$ such that
\[\nu^{(1,n)}_{x}(y)= \begin{cases}
s_n, & \text{if } x = y, \\
0,   & \text{if } x \neq y,
\end{cases}\]
for all $x,y \in L \setminus L_{\beta_n}$. Define $\xi_1(z) = \sup\{beta_n:n < \omega\}$. Let $m, n < \omega$, pick $x \in L \setminus L_{\xi_1(z)}$ and let $\nu_x^x\in \mathcal{M}_0(K)$ be the marginal measure $\nu_x^x(U)=\nu_x(U,\{x\})$. If $n \leq m$, then
\begin{align*}
|s_n - s_m| &= |\nu^{(1,n)}_{x}(x) - \nu^{(1,m)}_{x}(x)|=|\nu_{x}(V_n,\{x\}) - \nu_{x}(V_m,\{x\})| \\
&= |\nu_x(V_n \setminus V_m,\{x\})|= |\nu_x^x(V_n \setminus V_m)| \leq |\nu_x^x|(V_n \setminus \{z\}).
\end{align*}
Since $\bigcap_{n<\omega} (V_n \setminus \{z\}) = \emptyset$, it follows that $\lim_{n \to \infty} |\nu_x^x|(V_n \setminus \{z\}) = 0$, so the sequence $\{s_n\}_{n<\omega}$ converges to some limit $a_z \in \mathbb{R}$. Hence, for each $x \in L \setminus L_{\xi_1(z)}$ and $n < \omega$,
\begin{align*}
|\nu_x(z,x) -a_z| &\leq |\nu_x(z,x) - s_n| +|s_n -a_z|    \\
&\leq |\nu_x(V_n \setminus \{z\}, \{x\})| + |s_n - a_z|\\
&\leq |\nu_x^x|(V_n\setminus \{z\}) + |s_n - a_z|,
\end{align*}
which implies $\nu_x(z,x) = a_z$. Now let $x, y \in L \setminus L_{\xi_1(z)}$ with $x \neq y$, and fix $n < \omega$. If $\nu_x^y\in \mathcal{M}_0(K)$ denotes the marginal measure $\nu_x^y(U)=\nu_x(U,\{y\})$, we have
\begin{align*}
|\nu_x(z,y)|&\leq |\nu^{(1,n)}_{x}(y)|+|\nu^{(1,n)}_{x}(y)-\nu_x(z,y)| \\
&=|\nu_{x}(V_n\setminus \{z\},\{y\})|\leq |\nu_x^y|(V_n \setminus \{z\}).
\end{align*}
Since $\bigcap_{n<\omega} (V_n \setminus \{z\}) = \emptyset$, it follows that $\nu_x(z,y) = 0$. 

Applying the same reasoning to $\{\nu^{(2,n)}\}_n$, we obtain an ordinal $\xi_2(z)$ and a real number $b_z$ such that, whenever $x,y \in L\setminus L_{\xi_2(z)}$, if $x=y$ then $\nu_x(y,z)=b_z$, and $\nu_x(y,z)=0$ if $x\neq y$. We set $\gamma_z=\max\{\xi_1(z),\xi_2(z)\}$. 

Finally, we prove that $(a_z)_{z}$ and $(b_z)_{z}$ belong to $\ell_1(L)$. Let $G:C_0(L)\times C_0(L)\to C_0(L)$ be the bilinear operator induced by $\nu$ as in Theorem \ref{Thm:Representation}. Let $\beta<\omega_1$ be arbitrary and set 
$\rho=\sup\{\gamma_z:z\in L_\beta\}$. Pick $t \in L\setminus L_\rho$ and consider the bilinear form $G_t:C_0(L)\times C_0(L)\to \mathbb{R}$ given by $G_t(f,g)=G(f,g)(t)$. If $\tilde{G}_t$ denotes the extension of $G_t$ to $\ell_\infty(L)\times \ell_\infty(L)$ as in Theorem \ref{Thm:RieszBilinearScattered}, then the formula
\begin{align*}
\varphi(f)=\tilde{G}_t(f,\chi_t)
= \sum_{z\in L_\beta} \left(\sum_{y\in K} \nu_t(z,y)\chi_t(y)\right)f(z)
= \sum_{z\in L_\beta}\nu_t(z,t) f(z)
= \sum_{z\in L_\beta}a_z f(z) 
\end{align*}
defines a bounded linear functional $\varphi:C_0(L_\rho)\to \mathbb{R}$. Therefore $(a_z)_{z\in L_\beta}$ belongs to $\ell_1(L_\rho)$ (see \cite[Corollary 19.7.7]{Se}). From Proposition \ref{Prop:SomaSegmentada} we deduce that $(a_z)_{z\in L}$ lies in $\ell_1(L)$, and the same reasoning applies to $(b_z)_{z\in L}$.
\end{proof}

For the next two lemmas, recalling that $L_\alpha=\varphi^{-1}[\alpha+1]$ as presented in Section \ref{Sec:Basic}, we denote $L_\alpha^*=\varphi^{-1}[\alpha]$.

\begin{lemma}\label{Lem:SeparationAnxiety}
Let $\nu:K\to \mathcal{F}_0(K,K)$ be weak$^*$-continuous function vanishing at infinity. Suppose that, for each $z\in L$, there exists $\gamma_z<\omega_1$ such that, for all $x,y\in L\setminus L_{\gamma_z}$, we have $\nu_x(y,z)=0$. Then, for every $\xi<\omega_1$, there exists $\xi\leq \rho<\omega_1$ such that, for every $t \in L\setminus L_\rho$ and every $(u,v)\in (L^*_\rho\times K\setminus L^*_\rho)\ \cup\ (K\setminus L^*_\rho \times L^*_\rho)\ \cup\ (L^*_\rho \times L^*_\rho)$, one has $\nu_t(u,v)=0$.
\end{lemma}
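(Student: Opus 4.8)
The plan is to produce a single countable ordinal $\rho\ge\xi$ that works for all high $t$ simultaneously, and the central difficulty is a circularity: to kill $\nu_t$ on pairs meeting the low set $L^*_\rho$ one must enlarge $\rho$, but enlarging $\rho$ also raises the threshold that the high point $t$ (with $\varphi(t)>\rho$) has to clear. I would break this circularity by a closing-off argument. Starting from $\rho_0=\xi$, I build an increasing sequence of countable ordinals $\rho_0<\rho_1<\cdots$ so that at stage $n$ the countable set $L^*_{\rho_n}$ is fully controlled beyond $\rho_{n+1}$, and then set $\rho=\sup_n\rho_n$. Because $\rho$ is a limit ordinal, $L^*_\rho=\bigcup_n L^*_{\rho_n}$, so every pair $(u,v)$ with a coordinate in $L^*_\rho$ already has that coordinate in some $L^*_{\rho_n}$, while every $t$ with $\varphi(t)>\rho$ exceeds all the stage-$n$ thresholds at once. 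This reduces the lemma to a single successor step in which all relevant index sets are countable.

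At a given stage I would handle the three regions of the conclusion separately, exploiting that $L^*_{\rho_n}$ is countable. For pairs with both coordinates in $L^*_{\rho_n}$, Proposition~\ref{Thm:Vanish-A} applied to $A=B=L^*_{\rho_n}$ yields an ordinal beyond which $\mathrm{Var}(\nu_t)(L^*_{\rho_n},L^*_{\rho_n})=0$, hence $\nu_t(u,v)=0$ there. For pairs whose second coordinate lies in $L^*_{\rho_n}$ while the first is high, the hypothesis is exactly what is needed: for each $z\in L^*_{\rho_n}$ it furnishes $\gamma_z$, and the supremum of these countably many ordinals gives a bound past which $\nu_t(u,z)=0$ for all high $t,u$ and all $z\in L^*_{\rho_n}$ (the case $u=\infty$ being automatic since $\nu_t\in\mathcal F_0(K,K)$). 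Taking $\rho_{n+1}$ larger than $\rho_n$ and than every ordinal produced so far keeps the construction inside $\omega_1$.

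The main obstacle is the remaining region: pairs whose first coordinate $u$ lies in $L^*_{\rho_n}$ but whose second coordinate $v$ is high. Here Proposition~\ref{Thm:Vanish-A} is unavailable, since the high side $K\setminus L^*_\rho$ is uncountable, so I must freeze the low point $u$ and approximate it from outside. I would choose compact clopen neighborhoods $V_1\supset V_2\supset\cdots$ of $u$ with $\bigcap_k V_k=\{u\}$ and consider the maps $t\mapsto\nu_t(V_k,\cdot)$, which are weak$^*$-continuous $\mathcal M_0(K)$-valued functions vanishing at infinity precisely because $\chi_{V_k}\in C_0(L)$, so that $t\mapsto\nu_t(V_k,v)$ arises as $G(\chi_{V_k},\cdot)$ via Theorem~\ref{Thm:Representation}. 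Proposition~\ref{Thm:AuxiliarCallapse2} then collapses each such map to a diagonal form, $\nu_t(V_k,v)=r_k$ if $t=v$ and $0$ otherwise, valid for $t,v$ beyond some countable ordinal; letting $V_k\downarrow\{u\}$ and using that the set function $A\mapsto\nu_t(A,\{v\})$ is a finite measure (so $\nu_t(V_k,v)\to\nu_t(u,v)$) gives $\nu_t(u,v)=0$ for all high $t\ne v$.

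The genuinely delicate point, and the step I expect to be hardest, is the residual diagonal: the value $\nu_t(u,t)=\lim_k r_k$ for high $t$, which the collapsing structure alone cannot reach because the configuration $\{(u,t):t\text{ high}\}$ repeats the coordinate $u$ and is therefore not separated in the sense of Definition~\ref{Def:Collapsing}. This is exactly where the hypothesis must re-enter to force this coefficient to vanish, and controlling it uniformly over $u\in L^*_{\rho_n}$ (then folding the resulting bound into $\rho_{n+1}$) is the crux of the construction. Once this diagonal term is eliminated, reassembling the three regions at the limit ordinal $\rho=\sup_n\rho_n$ yields the desired $\rho\ge\xi$ and completes the proof.
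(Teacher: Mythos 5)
Your overall architecture coincides with the paper's: the closing-off recursion $\rho_0<\rho_1<\cdots$ with $\rho=\sup_n\rho_n$, Proposition~\ref{Thm:Vanish-A} applied to the countable set $L_{\rho_n}$ to kill the pairs in $L^*_\rho\times L^*_\rho$, and $\sup\{\gamma_z:z\in L_{\rho_n}\}$ to kill the mixed pairs whose low coordinate sits in the slot controlled by the hypothesis. The problem is that you never finish the remaining mixed region. For $(u,v)$ with $u\in L^*_\rho$ and $v$ high you set up a limiting argument via Proposition~\ref{Thm:AuxiliarCallapse2} applied to $t\mapsto\nu_t(V_k,\cdot)$, correctly dispose of the off-diagonal part $t\neq v$, and then explicitly defer the diagonal value $\nu_t(u,t)=\lim_k r_k$ as ``the crux,'' offering no argument for why it vanishes. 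As written, the proof is incomplete at exactly the point you identify as hardest.

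Worse, that diagonal term cannot be eliminated from the hypothesis as you state it (which is also how the lemma states it). Fix $u_0\in L$ and let $\nu_t(A,B)=\chi_A(u_0)\,\chi_B(t)$ for $t\in L$, with $\nu_\infty=0$; this is the representing function of the cross-interaction operator $G(f,g)=f(u_0)\,g$, it is weak$^*$-continuous vanishing at infinity, and it satisfies ``for each $z$ there is $\gamma_z$ with $\nu_x(y,z)=0$ for all $x,y\in L\setminus L_{\gamma_z}$'' (choose $\gamma_z$ large enough that $u_0\in L_{\gamma_z}$, which forces $y\neq u_0$), yet $\nu_t(u_0,t)=1$ for every $t$, so the conclusion fails on $L^*_\rho\times(K\setminus L^*_\rho)$ for every $\rho$ with $u_0\in L^*_\rho$. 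What is actually needed --- and what the paper tacitly invokes when it settles this region ``by a symmetric argument'' --- is the two-sided vanishing $\nu_x(y,z)=\nu_x(z,y)=0$ for high $x,y$. That symmetric form does hold in every application of the lemma, since there $\nu=\hat{F}$ with $F=G-H$ after the cross-interaction component has been subtracted (see the proof of Theorem~\ref{Thm:Main}). With the symmetric hypothesis your troublesome region is handled in one line, exactly like the other mixed region, and the entire detour through Proposition~\ref{Thm:AuxiliarCallapse2} becomes unnecessary; without it, the residual diagonal you are trying to control is precisely the coefficient $b_{u}$ of Theorem~\ref{Thm:Vanish-B}, which is a genuine obstruction rather than a removable one.
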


\begin{proof}
Fix $\xi<\omega_1$ and set $\rho_0=\xi$. Given $n<\omega$, suppose that $\rho_n$ has been defined. By Theorem \ref{Thm:Vanish-A}, there exists $\rho_n\leq \theta<\omega_1$ such that $\mathrm{Var}(\nu_y)(L_{\rho_n},L_{\rho_n})=0$ for every $y \in L\setminus L_\theta$. Moreover, by hypothesis, for each $z\in L_{\rho_n}$ there exists $\theta\leq \gamma_z<\omega_1$ such that $\nu_x(y,z)=0$ whenever $x,y\in L\setminus L_{\gamma_z}$. Define
\[
\rho_{n+1}=\sup\{\gamma_z: z\in L_{\rho_n}\}.
\]

By recursion, this yields a strictly increasing sequence $\rho_0<\rho_1<\rho_2<\ldots$ such that, for every $z\in L_{\rho_n}$ and all $x,y\in K\setminus L_{\rho_{n+1}}$, one has $\nu_x(y,z)=0$ and $\nu_x(z,z)=0$.  

Let $\rho=\sup\{\rho_n:n<\omega\}$. If $(u,v)\in L^*_\rho\times (K\setminus L^*_\rho)$ and $t\in L\setminus L_\rho$, then some $\rho_n<\rho$ satisfies $u\in L_{\rho_n}$, while $t,v\in K\setminus L^*_\rho\subset K\setminus L_{\rho_{n+1}}$, giving $\nu_t(u,v)=0$. A symmetric argument covers the case $(u,v)\in (K\setminus L^*_\rho)\times L^*_\rho$.  

Finally, if $(u,v)\in L^*_\rho\times L^*_\rho$, then $(u,v)\in L_{\rho_n}\times L_{\rho_n}$ for some $n$, and for all $t\in L\setminus L_\rho\subset L\setminus L_{\rho_{n+1}}$ we obtain $\nu_t(u,v)=0$. This completes the proof.
\end{proof}

\begin{lemma}\label{Lem:Separation}
Let $\nu:K\to \mathcal{F}_0(K,K)$ be a weak$^*$-continuous function vanishing at infinity. 
Suppose that, for each $z\in L$, there exists $\gamma_z<\omega_1$ such that, for all $x,y\in L\setminus L_{\gamma_z}$, we have $\nu_x(y,z)=0$. 
Let $\{x_\alpha\}_{\alpha<\omega_1}$, $\{y_\alpha\}_{\alpha<\omega_1}$, $\{z_\alpha\}_{\alpha<\omega_1}$ and $\{w_\alpha\}_{\alpha<\omega_1}$ be sequences of pairwise distinct points of $L$. 
Then, for every $\epsilon>0$, there exists a simultaneous refinement 
$\{x_{\alpha_\gamma}\}_{\gamma<\omega_1}$, $\{y_{\alpha_\gamma}\}_{\gamma<\omega_1}$, $\{z_{\alpha_\gamma}\}_{\gamma<\omega_1}$ and $\{w_{\alpha_\gamma}\}_{\gamma<\omega_1}$ of these sequences (see Definiton \ref{Def:Refinement}), together with a sequence $\{I_\gamma\}_{\gamma<\omega_1}$ of finite subsets of $L$ such that:
\begin{enumerate}[label=(B\alph*)]
\item\label{it:B1} $\{x_{\alpha_\gamma}, y_{\alpha_\gamma}, z_{\alpha_\gamma}, w_{\alpha_\gamma}\}\subset I_\gamma$ for all $\gamma<\omega_1$;   
\item\label{it:B2} $I_\gamma \cap I_\xi = \emptyset$ whenever $\gamma \neq \xi$;
\item\label{it:B3} $\mathrm{Var}(\nu_{x_{\alpha_\gamma}})(\{y_{\alpha_\gamma}\},K\setminus I_\gamma)
+\mathrm{Var}(\nu_{x_{\alpha_\gamma}})(K\setminus I_\gamma,\{z_{\alpha_\gamma}\})
+\mathrm{Var}(\nu_{x_{\alpha_\gamma}})(K\setminus I_\gamma,K\setminus I_\gamma)< \epsilon$;
\item\label{it:B4} $\mathrm{Var}(\nu_{x_{\alpha_\gamma}})(\{w_{\alpha_\gamma}\},K\setminus I_\gamma)+\mathrm{Var}(\nu_{x_{\alpha_\gamma}})(K\setminus I_\gamma,\{w_{\alpha_\gamma}\})<\epsilon$.
\end{enumerate}
Additionally, if $n<\omega$ is a fixed natural number, we may assume $|I_\gamma|\geq n$ for each $\gamma<\omega_1$.
\end{lemma}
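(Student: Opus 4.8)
The plan is to build the refinement $\{\alpha_\gamma\}_{\gamma<\omega_1}$ and the finite sets $\{I_\gamma\}_{\gamma<\omega_1}$ simultaneously by transfinite recursion on $\gamma<\omega_1$, using Theorem~\ref{Thm:Regularity} to secure the variation bounds \ref{it:B3}--\ref{it:B4} locally at each stage, and Lemma~\ref{Lem:SeparationAnxiety} to reconcile those bounds with the disjointness requirement \ref{it:B2}. First I would record the purely local estimate: for a fixed index $\alpha$, applying Theorem~\ref{Thm:Regularity} to $\mu=\nu_{x_\alpha}$ once with $(a,b)=(y_\alpha,z_\alpha)$ and once with $(a,b)=(w_\alpha,w_\alpha)$, and taking the union of the two resulting finite sets together with $\{x_\alpha,y_\alpha,z_\alpha,w_\alpha\}$, yields a finite set $I_0\subset K$ such that every finite $I\supset I_0$ already satisfies \ref{it:B1}, \ref{it:B3} and \ref{it:B4} for that index. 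Thus the only genuine difficulty is to arrange the $I_\gamma$ to be pairwise disjoint across the whole uncountable recursion without destroying these estimates.

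The recursion runs as follows. At stage $\gamma$ the previously constructed sets $\bigcup_{\xi<\gamma}I_\xi$ form a countable subset of $L$, hence lie in some $L_{\eta}$. I would apply Lemma~\ref{Lem:SeparationAnxiety} with the value $\eta+1$ to obtain $\rho>\eta$ such that $\nu_t(u,v)=0$ whenever $t\in L\setminus L_\rho$ and either $u\in L^*_\rho$ or $v\in L^*_\rho$ (the three regions in the conclusion of that lemma being exactly this complement). Because each of the four sequences consists of pairwise distinct points and $L_\rho$ is countable, only countably many indices place one of $x_\alpha,y_\alpha,z_\alpha,w_\alpha$ inside $L_\rho$; so I may choose $\alpha_\gamma$ larger than all earlier $\alpha_\xi$ with $x_{\alpha_\gamma},y_{\alpha_\gamma},z_{\alpha_\gamma},w_{\alpha_\gamma}\in L\setminus L_\rho\subset K\setminus L^*_\rho$. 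Taking $I_0$ as above for this index and setting $I_\gamma=(I_0\cap L)\setminus L^*_\rho$, enlarged if necessary by finitely many further points of $L\setminus L_\rho$ disjoint from $I_0$ to force $|I_\gamma|\ge n$, I obtain a finite subset of $K\setminus L^*_\rho$ containing the four points, hence disjoint from every earlier $I_\xi\subset L_\eta\subset L^*_\rho$; since $I_\gamma$ is finite the threshold at later stages will exceed it, securing \ref{it:B2} throughout.

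It remains to verify that truncating $I_0$ to $I_\gamma$ does not spoil \ref{it:B3}--\ref{it:B4}, and this is where the separation lemma does its work. Writing $K\setminus I_\gamma=\bigl((K\setminus I_0)\setminus F\bigr)\,\dotcup\,(I_0\setminus I_\gamma)$, where $F\subset L\setminus L_\rho$ is the filler and $I_0\setminus I_\gamma\subset L^*_\rho\cup\{\infty\}$, subadditivity of $\mathrm{Var}(\nu_{x_{\alpha_\gamma}})$ bounds each variation appearing in \ref{it:B3}--\ref{it:B4} by the corresponding variation over $K\setminus I_0$ plus extra terms in which one coordinate is confined to $I_0\setminus I_\gamma\subset L^*_\rho$. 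Since $x_{\alpha_\gamma}\in L\setminus L_\rho$ while the other special points lie in $K\setminus L^*_\rho$, the separation property forces $\nu_{x_{\alpha_\gamma}}$ to vanish on every such mixed region, so all extra terms are $0$ and the estimates reduce to those already guaranteed over $K\setminus I_0$. The enlargement by $F$ only shrinks $K\setminus I_0$, so by monotonicity of $\mathrm{Var}$ it can only help.

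The main obstacle is precisely this reconciliation step. The finite regularity set $I_0$ delivered by Theorem~\ref{Thm:Regularity} is not under our control and may meet the countably many sets chosen at earlier stages; a priori, discarding its offending part (the points in $L^*_\rho$) could inflate the variations, since enlarging the complement $K\setminus I$ increases $\mathrm{Var}$. Lemma~\ref{Lem:SeparationAnxiety} is exactly what resolves this tension: it guarantees that, for the admissible choice of $x_{\alpha_\gamma}$ above the threshold $\rho$, the discarded part lies in a region where $\nu_{x_{\alpha_\gamma}}$ is identically null, so the truncation that produces disjointness is harmless to the estimates. Once this is in place the recursion carries through to $\omega_1$ and the resulting refinement and family $\{I_\gamma\}_{\gamma<\omega_1}$ satisfy all of \ref{it:B1}--\ref{it:B4} together with the cardinality constraint.
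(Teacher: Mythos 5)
Your proposal is correct and follows essentially the same route as the paper's proof: a transfinite recursion in which Lemma~\ref{Lem:SeparationAnxiety} supplies a threshold $\rho$ above the previously used sets, the special points are chosen in $L\setminus L_\rho$, Theorem~\ref{Thm:Regularity} provides the local finite set, and the disjointness is obtained by discarding the part of that set lying in $L^*_\rho$, which is harmless because $\nu_{x_{\alpha_\gamma}}$ vanishes on all pairs meeting $L^*_\rho$ (and on pairs meeting $\infty$, since $\nu_{x_{\alpha_\gamma}}\in\mathcal{F}_0(K,K)$). The only cosmetic difference is that you invoke Theorem~\ref{Thm:Regularity} a second time with $(a,b)=(w_{\alpha_\gamma},w_{\alpha_\gamma})$ where the paper instead argues directly from atomicity of the two marginal measures at $w_{\alpha_\gamma}$; these are interchangeable.
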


\begin{proof}
We proceed by transfinite induction. Let $\gamma<\omega_1$ be arbitrary and assume that we have obtained sequences $\{x_{\alpha_\xi}\}_{\xi<\gamma}$, $\{y_{\alpha_\xi}\}_{\xi<\gamma}$, $\{z_{\alpha_\xi}\}_{\xi<\gamma}$, $\{w_{\alpha_\xi}\}_{\xi<\gamma}$, and a sequence $\{I_\xi\}_{\xi<\gamma}$ of finite subsets of $L$ satisfying conditions \ref{it:B1}–\ref{it:B4}, together with $|I_\xi|\geq n$ for each $\xi<\gamma$, where $n$ is a fixed natural number. Choose $\theta<\omega_1$ such that 
\[
\bigcup_{\delta<\xi} I_\xi \times I_\xi \subset L_{\theta}\times L_{\theta}.
\]

By Lemma \ref{Lem:SeparationAnxiety}, there exists $\theta\leq \rho<\omega_1$ such that, for each $t\in L\setminus L_{\rho}$ and every 
\[
(u,v)\in (L^*_{\rho}\times (K\setminus L^*_{\rho})) \cup ((K\setminus L^*_{\rho})\times L^*_{\rho}) \cup (L^*_{\rho}\times L^*_{\rho}),
\]
we have $\nu_t(u,v)=0$.

We now choose $\alpha_\gamma<\omega_1$, strictly greater than  $\sup\{\alpha_\xi:\xi<\gamma\}$, such that $x_{\alpha_\gamma},y_{\alpha_\gamma},z_{\alpha_\gamma},w_{\alpha_\gamma}\in L\setminus L_{\rho}$. According to Theorem \ref{Thm:Regularity}, we may obtain a finite set $J_0\subset L$ containing the points $x_{\alpha_\gamma},y_{\alpha_\gamma},z_{\alpha_\gamma},w_{\alpha_\gamma}$ such that for every $J\subset L$ containing $J_0$ we have
\[
\mathrm{Var}(\nu_{x_{\alpha_\gamma}})(\{y_{\alpha_\gamma}\},K\setminus J)
+\mathrm{Var}(\nu_{x_{\alpha_\gamma}})(K\setminus J,\{z_{\alpha_\gamma}\})
+\mathrm{Var}(\nu_{x_{\alpha_\gamma}})(K\setminus J,K\setminus J)< \epsilon.
\]

Next, we let $\mu_1$ and $\mu_2$ denote the marginal measures defined by $\mu_1(E)=\nu_{x_{\alpha_\gamma}}(\{w_{\alpha_\gamma}\},E)$ and $\mu_2(E)=\nu_{x_{\alpha_\gamma}}(E,\{w_{\alpha_\gamma}\})$ for every $E\in \mathfrak{B}(K)$, with total variation (in the usual sense of Radon measures) denoted $|\mu_1|$ and $|\mu_2|$, respectively. Since $\mu_1$ and $\mu_2$ are atomic measures, there exist finite sets $J_1, J_2 \subset K$ such that, for every $J\supset (J_1\cup J_2)$,
\[
\mathrm{Var}(\nu_{x_{\alpha_\gamma}})(\{w_{\alpha_\gamma}\},K\setminus J)=|\mu_1|(K\setminus J)<\tfrac{\epsilon}{2}, 
\quad 
\mathrm{Var}(\nu_{x_{\alpha_\gamma}})(K\setminus J,\{w_{\alpha_\gamma}\})=|\mu_2|(K\setminus J)<\tfrac{\epsilon}{2}.
\]

We then fix a finite set $J$ containing $J_0\cup J_1 \cup J_2$ such that $|J\setminus L_\xi|\geq n$. By the choice of $\rho$, we have $\nu_{x_{\alpha_\gamma}}(u,v)=0$ for each
\[
(u,v)\in (J\setminus L_{\xi}\times J\cap L_{\xi}) \cup (J\cap L_{\xi}\times J\setminus L_{\xi}) \cup (J\cap L_{\xi}\times J\cap L_{\xi}).
\]
Therefore, setting $I_\gamma=J\setminus L_{\xi}$, conditions \ref{it:B1}–\ref{it:B4} are satisfied for $\{x_{\alpha_\xi}\}_{\xi\leq\gamma}$, $\{y_{\alpha_\xi}\}_{\xi\leq\gamma}$, $\{z_{\alpha_\xi}\}_{\xi\leq \gamma}$, $\{w_{\alpha_\xi}\}_{\xi\leq \gamma}$, and $\{I_\xi\}_{\xi\leq \gamma}$. The proof then follows by transfinite induction.
\end{proof}

\begin{remark}\label{Rem:Equality}
In the previous lemma, the sequences 
$\{x_{\alpha}\}_{\alpha<\omega_1}$, $\{y_{\alpha}\}_{\alpha<\omega_1}$, 
$\{z_{\alpha}\}_{\alpha<\omega_1}$, and $\{w_{\alpha}\}_{\alpha<\omega_1}$ 
need not be pairwise distinct; in fact, two or more of them may coincide. 
We also emphasize that condition (4) of the lemma will only play a role 
in Theorem \ref{Thm:Vanish-E}.
\end{remark}

\begin{theorem}\label{Thm:Vanish-C}
Let $\nu:K\to \mathcal{F}_0(K,K)$ be weak$^*$-continuous vanishing at infinity. Suppose that, for each $z\in L$, there exists $\gamma_z<\omega_1$ such that, for all $x,y\in L\setminus L_{\gamma_z}$, we have $\nu_x(y,z)=0$. There exists $\rho<\omega$ such that, for all $x,y,z\in L\setminus L_{\rho}$, if $x\not\in \{y,z\}$ then one has $\nu_x(y,z)=0$. 
\end{theorem}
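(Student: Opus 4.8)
The plan is to argue by contradiction, building from a hypothetical failure an uncountable, strongly separated family of triples on which $\nu$ misbehaves, and then colliding that family with the collapsing property and the weak$^*$-continuity of $\nu$. First I would assume no such $\rho$ exists, so that for every $\rho<\omega_1$ there is a triple $x,y,z\in L\setminus L_\rho$ with $x\notin\{y,z\}$ and $\nu_x(y,z)\neq 0$. A transfinite recursion that always selects the next triple above the levels already used, followed by a pigeonhole on the values $|\nu_x(y,z)|$, produces an $\epsilon>0$ and an uncountable family $\{(x_\alpha,y_\alpha,z_\alpha)\}_{\alpha<\omega_1}$ with $x_\alpha\notin\{y_\alpha,z_\alpha\}$ and $|\nu_{x_\alpha}(y_\alpha,z_\alpha)|>\epsilon$. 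I would then split into the two cases $y_\alpha=z_\alpha$ (uncountably often) and $y_\alpha\neq z_\alpha$ (uncountably often); in the latter a standard disjointification makes the three coordinates pairwise distinct across the whole family, so that the collection becomes $(I_1,\dots,I_k)$-separated for the partition that isolates the evaluation coordinate from the input coordinates.

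Next I would localize using Lemma~\ref{Lem:Separation}, whose hypothesis is exactly the standing assumption that $\nu_x(y,z)=0$ for $x,y$ large with $z$ fixed. Applying it with a parameter $\epsilon'\ll\epsilon$ and auxiliary sequence $w_\alpha=x_\alpha$ (permitted by Remark~\ref{Rem:Equality}) yields a simultaneous refinement, relabelled $\{(x_\gamma,y_\gamma,z_\gamma)\}_{\gamma<\omega_1}$, together with pairwise disjoint finite sets $I_\gamma\ni x_\gamma,y_\gamma,z_\gamma$ for which the Fréchet variation of $\nu_{x_\gamma}$ on the row $\{y_\gamma\}\times(K\setminus I_\gamma)$, the column $(K\setminus I_\gamma)\times\{z_\gamma\}$, and the corner $(K\setminus I_\gamma)\times(K\setminus I_\gamma)$ is each below $\epsilon'$. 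Because the $I_\gamma$ are disjoint, these estimates isolate the single entry $\nu_{x_\gamma}(y_\gamma,z_\gamma)$: for test functions built from indicators attached to the sets $I_\beta$, the only contribution to $\tilde{G}_{x_\gamma}$ (the $\ell_\infty$-extension of Theorem~\ref{Thm:RieszBilinearScattered}) of size comparable to $\epsilon$ is the diagonal term $\beta=\gamma$, while every cross contribution $\nu_{x_\gamma}(y_\beta,z_{\beta'})$ with $(\beta,\beta')\neq(\gamma,\gamma)$ is absorbed into the $\epsilon'$-tails.

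With this in hand I would invoke the collapsing property (Definition~\ref{Def:Collapsing}) on the separated family to extract a subsequence along which the coordinate blocks collapse to distinct limit points, exactly one of which is $\infty$. This collapse is what converts the \emph{varying} singletons $\{y_\gamma\},\{z_\gamma\}$ into a \emph{fixed} limiting configuration, so that a single pair $f,g\in C_0(L)$, assembled from the indicators $\chi_{I_\gamma}$ with signs dictated by $\mathrm{sign}\,\nu_{x_\gamma}(y_\gamma,z_\gamma)$ and by a prescribed oscillation pattern, can be made to witness $|G(f,g)(x_\gamma)|>\epsilon/2$ for the indices in the subsequence. Since the $x_\gamma$ escape every $L_\eta$ while $G(f,g)\in C_0(L)$ must vanish at infinity (equivalently, $G(f,g)$ cannot be bounded below on an unbounded set), this yields the desired contradiction and proves the theorem.

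The hardest step will be the last one: passing from an entrywise lower bound involving moving singletons to a \emph{fixed}, genuinely continuous and countably supported pair $f,g$ that obstructs $G(f,g)\in C_0(L)$. Weak$^*$-continuity cannot be applied to $\nu_{x_\gamma}(\{y_\gamma\},\{z_\gamma\})$ directly precisely because the targets move; the collapsing property is the device that freezes them, while the tail estimates of Lemma~\ref{Lem:Separation}, together with the $\ell_1$-summability of the relevant marginals, are what guarantee that the surviving diagonal contribution retains a definite sign once the cross terms are discarded. Making these two mechanisms cooperate, so that the continuity of $f,g$ is preserved while the oscillation of $G(f,g)$ along the $x_\gamma$ is maintained, is the crux of the argument.
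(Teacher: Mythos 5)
Your overall skeleton coincides with the paper's: negate the statement to get an uncountable family $\{(x_\alpha,y_\alpha,z_\alpha)\}$ with $|\nu_{x_\alpha}(y_\alpha,z_\alpha)|\geq\epsilon$, localize with Lemma~\ref{Lem:Separation} to obtain pairwise disjoint finite sets $I_\alpha$ with small tails, feed the resulting separated collection into the collapsing property, and derive a contradiction from weak$^*$-continuity. The gap is in your final step, which you yourself flag as the crux but do not actually carry out, and the construction you sketch there would fail. You propose to assemble a single pair $f,g\in C_0(L)$ from the indicators $\chi_{I_\gamma}$ with a sign pattern. This cannot work as stated: the $I_\gamma$ are finite sets of points of a scattered (not discrete) space, so $\chi_{I_\gamma}$ need not even be continuous; and if you superpose uncountably many disjoint blocks the resulting function has uncountable support and so is not in $C_0(L)$, while if you keep only countably many blocks the corresponding evaluation points $x_\gamma$ all lie in some $L_\eta$ and no contradiction with vanishing at infinity arises. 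The $\ell_1$-summability and sign bookkeeping you invoke do not repair this.

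The paper's resolution is considerably simpler and is exactly the ``freezing'' you allude to but do not exploit. The collapsing property applied to the $(\{1\},\{2\},\{3,\dots,m+3\})$-separated collection $\{(y_\alpha,z_\alpha,x_\alpha,w_1^\alpha,\dots,w_m^\alpha)\}$ yields an accumulation point $(a,b,\infty,\dots,\infty)$ with $a\neq b$, and a net along which $y_{\alpha_\gamma}\to a$, $z_{\alpha_\gamma}\to b$, and crucially $x_{\alpha_\gamma}\to\infty$. One then fixes disjoint compact clopen neighborhoods $U\ni a$ and $V\ni b$ (so $\chi_U,\chi_V\in C_0(L)$ are the fixed witnesses --- no sign pattern, no sum over $\gamma$), passes to a subnet with $U\cap I_{\alpha_\gamma}=\{y_{\alpha_\gamma}\}$ and $V\cap I_{\alpha_\gamma}=\{z_{\alpha_\gamma}\}$, and uses the three tail estimates from Lemma~\ref{Lem:Separation} (with parameter $\epsilon/2$) to get $|\nu_{x_{\alpha_\gamma}}(U,V)|\geq\epsilon/2$ for all $\gamma$. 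Since $t\mapsto\nu_t(U,V)$ lies in $C_0(L)$ and $x_{\alpha_\gamma}\to\infty$, this forces $|\nu_\infty(U,V)|\geq\epsilon/2$, contradicting $\nu_\infty=0$. You should replace your signed-indicator construction with this fixed-neighborhood argument; everything before that point in your proposal is essentially correct.
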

\begin{proof}
If the theorem were false, for some $\epsilon>0$ we could obtain transfinite sequences of pairwise distinct points 
$\{x_\alpha\}_{\alpha<\omega_1}$, $\{y_\alpha\}_{\alpha<\omega_1}$, and $\{z_\alpha\}_{\alpha<\omega_1}$ such that, 
for each $\alpha<\omega_1$, we have $x_\alpha \notin \{y_\alpha,z_\alpha\}$ and  $|\nu_{x_\alpha}(y_\alpha,z_\alpha)|\geq \epsilon$.

By employing Lemma \ref{Lem:Separation}, we may obtain a simultaneous refinement of these sequences, together with a sequence 
$\{I_\alpha\}_{\alpha<\omega_1}$ of finite subsets of $L$ of cardinality strictly greater than $3$, such that:
\begin{enumerate}[label=(C\alph*)]
\item\label{it:C1} $\{x_{\alpha}, y_{\alpha}, z_{\alpha}\}\subset I_\alpha$ for all $\alpha<\omega_1$;   
\item\label{it:C1} $I_\alpha \cap I_\beta = \emptyset$ whenever $\alpha \neq \beta$;
\item\label{it:C1} $\mathrm{Var}(\nu_{x_{\alpha}})(\{y_{\alpha}\},K\setminus I_\alpha)
+\mathrm{Var}(\nu_{x_{\alpha}})(K\setminus I_\alpha,\{z_{\alpha}\})
+\mathrm{Var}(\nu_{x_{\alpha}})(K\setminus I_\alpha,K\setminus I_\alpha)\leq \frac{\epsilon}{2}$.
\end{enumerate}

We may take another simultaneous refinement so that either $y_\alpha\neq z_\alpha$ for all $\alpha<\omega_1$ or $y_\alpha=z_\alpha$ for all $\alpha<\omega_1$. 
We proceed with the proof for the former case, and the latter can be obtained with simple adjustments.  
By taking a further simultaneous refinement, we may assume that $|I_\alpha|=m+3$ (with $m>0$) for each $\alpha<\omega_1$. 
Denoting $I_\alpha=\{y_\alpha,z_\alpha, x_\alpha, w_1^\alpha,\ldots,w_m^\alpha\}$, we form the collection 
\[
\mathcal{C} = \{(y_\alpha,z_\alpha,x_\alpha,w_1^\alpha, \ldots, w_{m}^\alpha) : \alpha < \omega_1\} \subset L^{\, m+3}
\]
and note that it forms an uncountable $(\{1\},\{2\},\{3,4,\ldots,m+3\})$-separated subset of $L^{\, m+3}$.

Since $L$ is a collapsing space, see Definition \ref{Def:Collapsing}, there exist $a,b\in L$, with $a\neq b$, such that 
$\ell = (a,b,\infty, \ldots, \infty)$ is an accumulation point of $\mathcal{C}$. 
We then fix a net $\{(y_{\alpha_\gamma},z_{\alpha_\gamma},x_{\alpha_\gamma},w_1^{\alpha_\gamma}, \ldots, w_{n}^{\alpha_\gamma})\}_{\gamma \in \Gamma}$  in  $\mathcal{C}$ converging to the point $\ell$.

Next, we select disjoint clopen compact neighborhoods $U$ of $a$ and $V$ of $b$, and, by passing to a subnet if necessary, we can assume that 
$U \cap I_{\alpha_{\gamma}} = \{y_{\alpha_{\gamma}}\}$ and $V \cap I_{\alpha_{\gamma}} = \{z_{\alpha_{\gamma}}\}$ for each $\gamma \in \Gamma$. 
Therefore, we have $U\setminus \{y_{\alpha_{\gamma}}\}\subset K\setminus I_{\alpha_{\gamma}}$ and 
$V\setminus \{z_{\alpha_{\gamma}}\}\subset K\setminus I_{\alpha_{\gamma}}$. Hence,

\begin{align*}
|\nu_{x_{\alpha_{\gamma}}}(U,V)| 
&\geq |\nu_{x_{\alpha_{\gamma}}}(y_{\alpha_{\gamma}},z_{\alpha_{\gamma}})| 
- \Big( 
    |\nu_{x_{\alpha_{\gamma}}}(\{y_{\alpha_{\gamma}}\},V\setminus \{z_{\alpha_{\gamma}}\})| \\
&\quad + |\nu_{x_{\alpha_{\gamma}}}(U\setminus \{y_{\alpha_{\gamma}}\},\{z_{\alpha_{\gamma}}\})| 
    + |\nu_{x_{\alpha_{\gamma}}}(U\setminus \{y_{\alpha_{\gamma}}\},V\setminus \{z_{\alpha_{\gamma}}\})| 
  \Big)\\
&\geq \epsilon - \Big(
    \mathrm{Var}(\nu_{x_{\alpha_{\gamma}}})(\{y_{\alpha_{\gamma}}\}, V\setminus \{z_{\alpha_{\gamma}}\})\\
&\quad + \mathrm{Var}(\nu_{x_{\alpha_{\gamma}}})(U\setminus \{y_{\alpha_{\gamma}}\},\{z_{\alpha_{\gamma}}\})  
    + \mathrm{Var}(\nu_{x_{\alpha_{\gamma}}})(U\setminus \{y_{\alpha_{\gamma}}\}, V\setminus \{z_{\alpha_{\gamma}}\})
  \Big)\\
&\geq \epsilon - \Big(
    \mathrm{Var}(\nu_{x_{\alpha_\gamma}})(\{y_{\alpha_{\gamma}}\}, K\setminus I_{\alpha_\gamma}) \\
&\quad + \mathrm{Var}(\nu_{x_{\alpha_\gamma}})(K\setminus I_{\alpha_\gamma},\{z_{\alpha_{\gamma}}\}) 
    + \mathrm{Var}(\nu_{x_{\alpha_{\gamma}}})(K\setminus I_{\alpha_\gamma},K\setminus I_{\alpha_\gamma})
  \Big)\\
&> \epsilon - \frac{\epsilon}{2} = \frac{\epsilon}{2}.
\end{align*}

From the weak$^*$-continuity of the function $\nu$ and since $x_{\alpha_\gamma}\to \infty$, we obtain that $|\nu_\infty(U,V)|\geq \frac{\epsilon}{2}$, but this is a contradiction since $\nu_\infty=0$.
\end{proof}

\begin{theorem}\label{Thm:Vanish-D}Let $\nu:K\to \mathcal{F}_0(K,K)$ be weak$^*$-continuous vanishing at infinity. Suppose that, for each $z\in L$, there exists $\gamma_z<\omega_1$ such that, for all $x,y\in L\setminus L_{\gamma_z}$, we have $\nu_x(y,z)=0$. Then, there is $r\in \mathbb{R}$ and $\rho<\omega_1$ such that $\nu_y(y,y)=r$ for all $y\in L\setminus L_{\rho_1}$.
\end{theorem}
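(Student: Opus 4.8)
The plan is to argue by contradiction after isolating the single genuinely hard quantity, the diagonal $d(t):=\nu_t(t,t)$. Note first that $|d(t)|\le \mathrm{Var}(\nu_t)(\{t\},\{t\})\le \mathrm{Var}(\nu_t)(K,K)\le \|G\|$ by Theorem~\ref{Thm:Representation}, so $d$ is a bounded real function on $L$. I would measure its tail oscillation by $\omega:=\lim_{\rho<\omega_1}\mathrm{osc}(\rho)$, where $\mathrm{osc}(\rho)=\sup\{|d(s)-d(t)|:s,t\in L\setminus L_\rho\}$ is nonincreasing in $\rho$, and split into the cases $\omega=0$ and $\omega>0$.

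The case $\omega=0$ is elementary and requires no topology of $L$. Here $d$ is uniformly Cauchy at infinity, hence converges to some $r\in\mathbb{R}$ in the sense that for each $n<\omega$ there is $\rho_n<\omega_1$ with $|d(t)-r|\le 1/n$ for all $t\in L\setminus L_{\rho_n}$. Consequently each set $\{t:|d(t)-r|\ge 1/n\}$ is contained in the countable clopen set $L_{\rho_n}$, so $\{t:d(t)\neq r\}=\bigcup_{n}\{t:|d(t)-r|\ge 1/n\}\subset L_{\rho^\ast}$ with $\rho^\ast=\sup_n\rho_n<\omega_1$. Thus $d\equiv r$ on $L\setminus L_{\rho^\ast}$, which is the desired conclusion.

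The substantive case is $\omega>0$, and the core is the following collapse-to-a-common-point estimate. Choosing $\epsilon=\omega/5$, I would build by transfinite recursion cofinal sequences $\{s_\alpha\}$, $\{t_\alpha\}$ of pairwise distinct points with $|d(s_\alpha)-d(t_\alpha)|\ge \omega/2$ (possible since $\mathrm{osc}(\rho)\ge\omega$ for every $\rho$). Applying Lemma~\ref{Lem:Separation} separately to the diagonal sequences (roles $x_\alpha=y_\alpha=z_\alpha=s_\alpha$, and likewise for $t_\alpha$) yields, after a simultaneous refinement, pairwise disjoint finite sets $I^s_\gamma\ni s_{\alpha_\gamma}$ and $I^t_\gamma\ni t_{\alpha_\gamma}$ satisfying \ref{it:B3}, arranged so that $t_{\alpha_\gamma}\notin I^s_\gamma$ and $s_{\alpha_\gamma}\notin I^t_\gamma$. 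Listing all points of $I^s_\gamma\cup I^t_\gamma$ as coordinates, with $s_{\alpha_\gamma},t_{\alpha_\gamma}$ in one block and every remaining (padding) point in the block that collapses to $\infty$, the collapsing property of Definition~\ref{Def:Collapsing} produces a subnet along which $s_{\alpha_\gamma},t_{\alpha_\gamma}\to a$ for a single $a\in L$ while all padding points escape to $\infty$. For a small compact clopen $U\ni a$ we then have, for large $\gamma$, that $U\cap I^s_{\alpha_\gamma}=\{s_{\alpha_\gamma}\}$, so $U\setminus\{s_{\alpha_\gamma}\}\subset K\setminus I^s_{\alpha_\gamma}$; expanding $\nu_{s_{\alpha_\gamma}}(U,U)$ and using \ref{it:B3} bounds every term of $\nu_{s_{\alpha_\gamma}}(U,U)-d(s_{\alpha_\gamma})$ — crucially, all the cross terms involving $t_{\alpha_\gamma}$ fall inside $K\setminus I^s_{\alpha_\gamma}$ and are thereby absorbed — giving $|\nu_{s_{\alpha_\gamma}}(U,U)-d(s_{\alpha_\gamma})|<\epsilon$. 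Since $t\mapsto \nu_t(U,U)=G(\chi_U,\chi_U)(t)$ lies in $C_0(L)$ and $s_{\alpha_\gamma}\to a$, weak$^*$-continuity gives $\nu_{s_{\alpha_\gamma}}(U,U)\to\nu_a(U,U)$, and the identical argument applies to $t$. Hence both $d(s_{\alpha_\gamma})$ and $d(t_{\alpha_\gamma})$ lie within $\epsilon$ of $\nu_a(U,U)$, so $|d(s_{\alpha_\gamma})-d(t_{\alpha_\gamma})|\le 2\epsilon=2\omega/5<\omega/2$, contradicting the construction.

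The principal obstacle, and the reason the diagonal cannot be treated by the clean $\nu_\infty=0$ argument of Theorem~\ref{Thm:Vanish-C}, is precisely that for $d$ the point of evaluation coincides with the index $t$, so there is no way to send the index to $\infty$ while capturing the value by a fixed clopen pair. My resolution is to accumulate the index at a finite point $a$ and compare against $\nu_a$; the decisive technical point that makes this work with only the variation bound \ref{it:B3} — and not the stronger condition \ref{it:B4}, reserved for Theorem~\ref{Thm:Vanish-E} — is to keep each of the two diagonal points outside the other's separation set, so that the otherwise uncontrolled near-diagonal interaction between $s_{\alpha_\gamma}$ and $t_{\alpha_\gamma}$ is pushed into the region $K\setminus I^s_{\alpha_\gamma}$ where \ref{it:B3} already provides the needed smallness.
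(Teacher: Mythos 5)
Your proof is correct and follows essentially the same route as the paper's: both arguments extract two transfinite sequences of far-out points whose diagonal values $\nu_t(t,t)$ are uniformly separated, apply Lemma~\ref{Lem:Separation} with all four roles equal to a single point to obtain the variation bound \ref{it:B3}, use the collapsing property with the partition $(\{1,2\},\{\text{rest}\})$ so that both sequences accumulate at one point $a\in L$ while the padding escapes to $\infty$, and then derive a contradiction by comparing $\nu_{\cdot}(U,U)$ with the diagonal values through weak$^*$-continuity at $a$. The only cosmetic difference is that you produce the two separated sequences via an explicit tail-oscillation dichotomy (handling the oscillation-zero case directly), whereas the paper invokes \cite[Lemma 4.6]{candido2} to obtain rationals $p<q$ and uncountable sets on which the diagonal lies below $p$ and above $q$.
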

\begin{proof}
Towards a contradiction, assume that for every $r \in \mathbb{R}$ and every $\beta<\omega_1$, there exists $y \in L \setminus L_\beta$ such that $\nu_{y}(y,y)\neq r$. By \cite[Lemma 4.6]{candido2}, this yields rational numbers $p<q$ and uncountable sets $A,B\subset L$ such that 
\[
\nu_{y}(y,y)<p<q<\nu_{z}(z,z) \quad \text{for all } y\in A,\ z\in B.
\]

Since $A$ and $B$ are uncountable, we may construct sequences $\{y_\alpha\}_{\alpha<\omega_1}\subset A$ and $\{z_\alpha\}_{\alpha<\omega_1}\subset B$ of pairwise distinct points. Then, applying Lemma \ref{Lem:Separation} individually to each of these sequences (see Remark \ref{Rem:Equality}), we obtain subsequences $\{y_\alpha\}_{\alpha<\omega_1}\subset A$ and $\{z_\alpha\}_{\alpha<\omega_1}\subset B$ of pairwise distinct points, together with sequences 
$\{I_\alpha\}_{\alpha<\omega_1}$ and $\{J_\alpha\}_{\alpha<\omega_1}$ of finite subsets of $L$ with cardinality strictly greater than $2$, such that:
\begin{enumerate}[label=(D\alph*)]
\item\label{it:D1} $y_\alpha\in I_\alpha$ and $z_\alpha\in J_\alpha$ for all $\alpha<\omega_1$;   
\item\label{it:D2} $I_\alpha\cap I_\beta=\emptyset$ and $J_\alpha\cap J_\beta=\emptyset$ whenever $\alpha\neq\beta$;
\item\label{it:D3} for all $\alpha<\omega_1$,
\[
\mathrm{Var}(\nu_{y_{\alpha}})(\{y_{\alpha}\},K\setminus I_\alpha)+\mathrm{Var}(\nu_{y_{\alpha}})(K\setminus I_\alpha,\{y_{\alpha}\})+\mathrm{Var}(\nu_{y_{\alpha}})(K\setminus I_\alpha,K\setminus I_\alpha)\leq \tfrac{q-p}{3},
\]
and
\[
\mathrm{Var}(\nu_{z_{\alpha}})(\{z_{\alpha}\},K\setminus J_\alpha)+\mathrm{Var}(\nu_{z_{\alpha}})(K\setminus J_\alpha,\{z_{\alpha}\})+\mathrm{Var}(\nu_{z_{\alpha}})(K\setminus J_\alpha,K\setminus J_\alpha)\leq \tfrac{q-p}{3}.
\]
\end{enumerate}

Moreover, after another simultaneous refinement, we may assume that $\{I_\alpha \cup J_\alpha\}_{\alpha<\omega_1}$ is a sequence of pairwise disjoint sets satisfying $|I_\alpha \cup J_\alpha| = n+2$ (for some $n>0$) for every $\alpha < \omega_1$. Indeed, by applying the $\varDelta$-System Lemma, we may assume that $\{I_\alpha \cup J_\alpha\}_{\alpha<\omega_1}$ forms a $\varDelta$-system with root $R$, and that all members of the sequence have the same cardinality, strictly greater than $2$. Then, condition~\ref{it:D2} forces $R = \emptyset$.

Let us assume that $I_\alpha \cup J_\alpha = \{y_\alpha, z_\alpha, w_1^\alpha, \ldots, w_n^\alpha\}$ for all $\alpha < \omega_1$. This allows us to form the collection $\mathcal{C} = \{(y_\alpha, z_\alpha, w_1^\alpha, \ldots, w_n^\alpha) : \alpha < \omega_1\} \subset L^{n+2}$, which constitutes an uncountable $(\{1,2\}, \{3,4,\ldots,n+2\})$-separated set in $L^{n+2}$.

Since $L$ has the collapsing property, see Definition \ref{Def:Collapsing}, there exists a point $a \in L$ such that $\ell = (a,a,\infty,\ldots,\infty)$ is an accumulation point of $\mathcal{C}$.  We then fix a net $\{(y_{\alpha_\gamma},z_{\alpha_\gamma},w_1^{\alpha_\gamma}, \ldots, w_{n}^{\alpha_\gamma})\}_{\gamma \in \Gamma}$  in  $\mathcal{C}$ converging to the point $\ell$.  

Finally, we fix a clopen neighborhood $V$ of $a$ and, if necessary, by passing to a 
subnet we may assume that $V \cap I_{\alpha_\gamma} = \{y_{\alpha_\gamma}\}$ and $V \cap J_{\alpha_\gamma} = \{z_{\alpha_\gamma}\}$, for all $\gamma \in \varGamma$. It then follows that
\[
V \setminus \{y_{\alpha_\gamma}\} \subset K \setminus I_{\alpha_\gamma}
\quad \text{and} \quad 
V \setminus \{z_{\alpha_\gamma}\} \subset K \setminus J_{\alpha_\gamma},
\]
which in turn allows us to write:
\begin{align*}
\nu_{y_{\alpha_{\gamma}}}(V,V) 
&\leq \nu_{y_{\alpha_{\gamma}}}(y_{\alpha_{\gamma}},y_{\alpha_{\gamma}}) 
  + \Big(
      |\nu_{y_{\alpha_{\gamma}}}(\{y_{\alpha_{\gamma}}\}, V\setminus \{y_{\alpha_{\gamma}}\})| \\
&\quad + |\nu_{y_{\alpha_{\gamma}}}(V\setminus \{y_{\alpha_{\gamma}}\},\{y_{\alpha_{\gamma}}\})|
      + |\nu_{y_{\alpha_{\gamma}}}(V\setminus \{y_{\alpha_{\gamma}}\},V\setminus \{y_{\alpha_{\gamma}}\})|
    \Big)\\[0.5em]
&\leq p + \Big(
      \mathrm{Var}(\nu_{y_{\alpha_{\gamma}}})(\{y_{\alpha_{\gamma}}\},V\setminus \{y_{\alpha_{\gamma}}\}) \\
&\quad + \mathrm{Var}(\nu_{y_{\alpha_{\gamma}}})(V\setminus \{y_{\alpha_{\gamma}}\},\{y_{\alpha_{\gamma}}\}) 
      + \mathrm{Var}(\nu_{y_{\alpha_{\gamma}}})(V\setminus \{y_{\alpha_{\gamma}}\},V\setminus \{y_{\alpha_{\gamma}}\})
    \Big)\\[0.5em]
&\leq p + \Big(
      \mathrm{Var}(\nu_{y_{\alpha_{\gamma}}})(\{y_{\alpha_{\gamma}}\},K\setminus I_{\alpha_{\gamma}}) \\
&\quad + \mathrm{Var}(\nu_{y_{\alpha_{\gamma}}})(K\setminus I_{\alpha_{\gamma}},\{y_{\alpha_{\gamma}}\}) 
      + \mathrm{Var}(\nu_{y_{\alpha_{\gamma}}})(K\setminus I_{\alpha_{\gamma}},K\setminus I_{\alpha_{\gamma}})
    \Big)\\[0.5em]
&< p + \frac{q-p}{3} 
= \frac{2p+q}{3}.
\end{align*}

\begin{align*}
\nu_{z_{\alpha_{\gamma}}}(V,V) 
&\geq \nu_{z_{\alpha_{\gamma}}}(z_{\alpha_{\gamma}},z_{\alpha_{\gamma}}) 
   - \Big(
       |\nu_{z_{\alpha_{\gamma}}}(\{z_{\alpha_{\gamma}}\},V\setminus \{z_{\alpha_{\gamma}}\})| \\
&\quad + |\nu_{z_{\alpha_{\gamma}}}(V\setminus \{z_{\alpha_{\gamma}}\},\{z_{\alpha_{\gamma}}\})|
       + |\nu_{z_{\alpha_{\gamma}}}(V\setminus \{z_{\alpha_{\gamma}}\},V\setminus \{z_{\alpha_{\gamma}}\})|
     \Big)\\[0.5em]
&\geq q - \Big(
       \mathrm{Var}(\nu_{z_{\alpha_{\gamma}}})(\{z_{\alpha_{\gamma}}\},V\setminus \{z_{\alpha_{\gamma}}\}) \\
&\quad + \mathrm{Var}(\nu_{z_{\alpha_{\gamma}}})(V\setminus \{z_{\alpha_{\gamma}}\},\{z_{\alpha_{\gamma}}\}) 
       + \mathrm{Var}(\nu_{z_{\alpha_{\gamma}}})(V\setminus \{z_{\alpha_{\gamma}}\},V\setminus \{z_{\alpha_{\gamma}}\})
     \Big)\\[0.5em]
&\geq q - \Big(
       \mathrm{Var}(\nu_{z_{\alpha_{\gamma}}})(\{z_{\alpha_{\gamma}}\},K\setminus J_{\alpha_{\gamma}}) \\
&\quad + \mathrm{Var}(\nu_{z_{\alpha_{\gamma}}})(K\setminus J_{\alpha_{\gamma}},\{z_{\alpha_{\gamma}}\}) 
       + \mathrm{Var}(\nu_{z_{\alpha_{\gamma}}})(K\setminus J_{\alpha_{\gamma}},K\setminus J_{\alpha_{\gamma}})
     \Big)\\[0.5em]
&> q - \frac{q-p}{3} 
= \frac{2q+p}{3}.
\end{align*}

Recalling that $\nu$ is weak$^*$-continuous and that $V$ is a clopen compact subset of $L$, 
and since both nets $\{y_{\alpha_\gamma}\}_{\gamma\in \varGamma}$ and $\{z_{\alpha_\gamma}\}_{\gamma\in \varGamma}$ converge to $a$, 
the preceding inequalities yield
\[
\nu_a(V,V) 
= \lim_{\gamma\to \infty}\nu_{y_{\alpha_\gamma}}(V,V) 
   \leq \tfrac{2p+q}{3} 
   < \tfrac{2q+p}{3} 
   \leq \lim_{\gamma \to \infty}\nu_{z_{\alpha_\gamma}}(V,V) 
   = \nu_a(V,V),
\]
which is a contradiction.
\end{proof}

\begin{lemma}\label{Lem:SeparationPlus}
Let $\nu:K\to \mathcal{F}_0(K,K)$ be a weak$^*$-continuous function vanishing at infinity. Suppose that, for each $z\in L$, there exists $\gamma_z<\omega_1$ such that, for all $x,y\in L\setminus L_{\gamma_z}$, we have $\nu_x(y,z)=0$. Let $\{y_\alpha\}_{\alpha<\omega_1}$ and $\{z_\alpha\}_{\alpha<\omega_1}$ be sequences of pairwise distinct points of $L$ such that $y_\alpha\neq z_\alpha$ for each $\alpha<\omega_1$. Then there exist a simultaneous refinement $\{y_{\alpha_\gamma}\}_{\gamma<\omega_1}$ and $\{z_{\alpha_\gamma}\}_{\gamma<\omega_1}$ together with a new sequence $\{x_\gamma\}_{\gamma<\omega_1}$ of pairwise distinct points of $L$ such that, for all $\gamma,\xi,\eta<\omega_1$, the points $x_\gamma,y_{\alpha_\xi},z_{\alpha_\eta}$ are pairwise distinct and
\begin{enumerate}[label=(E\alph*)]
\item\label{it:E1} 
\[
\nu_{y_{\alpha_\gamma}}(x_\gamma,z_{\alpha_\gamma})=\nu_{y_{\alpha_\gamma}}(z_{\alpha_\gamma},x_\gamma)
=\nu_{z_{\alpha_\gamma}}(x_\gamma,y_{\alpha_\gamma})=\nu_{z_{\alpha_\gamma}}(y_{\alpha_\gamma},x_\gamma)=0,
\]
\item\label{it:E2} 
\[
\nu_{x_\gamma}(y_{\alpha_\gamma},x_\gamma)=\nu_{x_\gamma}(x_\gamma,y_{\alpha_\gamma})
=\nu_{x_\gamma}(z_{\alpha_\gamma},x_\gamma)=\nu_{x_\gamma}(x_\gamma,z_{\alpha_\gamma})=0.
\]
\end{enumerate}
\end{lemma}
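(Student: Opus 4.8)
The plan is to combine Theorem~\ref{Thm:Vanish-B}, which evaluates the diagonal interactions of $\nu$, with Theorem~\ref{Thm:Vanish-C}, which kills the off-diagonal ones, and then to manufacture the fresh sequence $\{x_\gamma\}$ by a transfinite recursion in which every newly produced point is placed at a strictly higher $\varphi$-level than all points produced before it.

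First I would apply Theorem~\ref{Thm:Vanish-B} to obtain $(a_w)_{w},(b_w)_{w}\in\ell_1(L)$ together with ordinals $\gamma_w<\omega_1$ governing the diagonal values of $\nu$ at the point $w$. Because these families are summable, their supports $\{w:a_w\neq 0\}$ and $\{w:b_w\neq 0\}$ are countable; moreover, the standing hypothesis $\nu_x(y,z)=0$ for $x,y\in L\setminus L_{\gamma_z}$ forces $b_w=0$ for every $w\in L$. I would also fix, via Theorem~\ref{Thm:Vanish-C}, an ordinal $\rho_0<\omega_1$ such that $\nu_x(y,z)=0$ whenever $x,y,z\in L\setminus L_{\rho_0}$ and $x\notin\{y,z\}$. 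The key observation is that each of the four expressions in \ref{it:E2} has its repeated entry equal to the index $x_\gamma$: writing $w=y_{\alpha_\gamma}$ (resp. $w=z_{\alpha_\gamma}$) and taking $x_\gamma\in L\setminus L_{\gamma_w}$, Theorem~\ref{Thm:Vanish-B} identifies these terms with $a_{y_{\alpha_\gamma}},b_{y_{\alpha_\gamma}},a_{z_{\alpha_\gamma}},b_{z_{\alpha_\gamma}}$ respectively. Hence \ref{it:E2} holds as soon as the refinement selects indices $\alpha$ with $a_{y_\alpha}=a_{z_\alpha}=0$ and $x_\gamma\in(L\setminus L_{\gamma_{y_{\alpha_\gamma}}})\cap(L\setminus L_{\gamma_{z_{\alpha_\gamma}}})$. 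The terms in \ref{it:E1} are genuinely off-diagonal, since the required pairwise distinctness guarantees the index differs from both arguments, so they vanish directly by Theorem~\ref{Thm:Vanish-C} once the three points involved lie above $L_{\rho_0}$.

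Accordingly I would run a transfinite recursion on $\gamma<\omega_1$. At stage $\gamma$, let $\theta_\gamma<\omega_1$ exceed $\rho_0$ together with all $\varphi$-levels and all ordinals $\gamma_w$ of the points already produced (the $x_\xi,y_{\alpha_\xi},z_{\alpha_\xi}$ for $\xi<\gamma$). Since $\{w:a_w\neq 0\}$ is countable, each $L_{\theta_\gamma}$ is countable, and the families $\{y_\alpha\},\{z_\alpha\}$ are injective, co-countably many indices $\alpha$ satisfy simultaneously $a_{y_\alpha}=a_{z_\alpha}=0$, $\varphi(y_\alpha)>\theta_\gamma$ and $\varphi(z_\alpha)>\theta_\gamma$; I choose such an $\alpha_\gamma>\sup_{\xi<\gamma}\alpha_\xi$. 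Using that $\varphi$ is a finite-to-one surjection onto $\omega_1$ (so points of arbitrarily high level exist), I then pick $x_\gamma$ at a $\varphi$-level strictly above $\max\{\gamma_{y_{\alpha_\gamma}},\gamma_{z_{\alpha_\gamma}},\varphi(y_{\alpha_\gamma}),\varphi(z_{\alpha_\gamma})\}$. Placing every new point strictly higher than all earlier ones makes $\{x_\gamma\}$ pairwise distinct and forces $x_\gamma,y_{\alpha_\xi},z_{\alpha_\eta}$ to be pairwise distinct for all $\gamma,\xi,\eta$, because points at distinct $\varphi$-levels cannot coincide; simultaneously, the level bounds place all relevant points above $L_{\rho_0}$ and above the respective $\gamma_w$, so Theorems~\ref{Thm:Vanish-C} and~\ref{Thm:Vanish-B} apply and yield \ref{it:E1} and \ref{it:E2}.

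The main obstacle is exactly the diagonal pair $\nu_{x_\gamma}(y_{\alpha_\gamma},x_\gamma)$ and $\nu_{x_\gamma}(z_{\alpha_\gamma},x_\gamma)$: by Theorem~\ref{Thm:Vanish-B} these equal $a_{y_{\alpha_\gamma}}$ and $a_{z_{\alpha_\gamma}}$, and they are \emph{not} annihilated by the hypothesis of the lemma, which only forces the $b$-values to vanish. The decisive point that rescues the argument is the $\ell_1$-summability of $(a_w)_w$, which confines its support to a countable set and thereby allows the simultaneous refinement to avoid it entirely; the remaining bookkeeping (keeping all points pairwise distinct while keeping their levels high enough for both theorems) is handled uniformly by the strictly increasing level construction.
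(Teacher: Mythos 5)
Your argument is correct, but it reaches \ref{it:E1} and \ref{it:E2} by a genuinely different route from the paper. The paper runs the same outer transfinite recursion (choose $\alpha_\gamma$ above all previous indices, then place $x_\gamma$ at a sufficiently high level), but it disposes of the two conditions with lighter tools: for \ref{it:E1} it invokes Proposition~\ref{Prop:CountableSupportFrechetMeasures} --- the supports of the countably many Fr\'echet measures $\nu_{y_{\alpha_\xi}},\nu_{z_{\alpha_\xi}}$, $\xi\leq\gamma$, all lie in some $L_{\rho_2}\times L_{\rho_2}$, so any $x_\gamma$ chosen above $L_{\rho_2}$ kills all four terms at once --- and for \ref{it:E2} it appeals directly to the hypothesis on the ordinals $\gamma_z$, with no need to thin the sequences $\{y_\alpha\}$, $\{z_\alpha\}$ beyond passing to a tail. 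You instead route \ref{it:E1} through Theorem~\ref{Thm:Vanish-C} and \ref{it:E2} through Theorem~\ref{Thm:Vanish-B}, which obliges you to also discard the countably many indices $\alpha$ with $a_{y_\alpha}\neq 0$ or $a_{z_\alpha}\neq 0$; that is a legitimate refinement, and your level-bookkeeping (strictly increasing $\varphi$-levels, $\varphi$ finite-to-one and surjective) is sound --- indeed it establishes the cross-index distinctness of $x_\gamma,y_{\alpha_\xi},z_{\alpha_\eta}$ more explicitly than the paper does. One genuine advantage of your route: the terms $\nu_{x_\gamma}(y_{\alpha_\gamma},x_\gamma)$ and $\nu_{x_\gamma}(z_{\alpha_\gamma},x_\gamma)$ are not literally covered by the one-sided hypothesis $\nu_x(y,z)=0$ (there the fixed point occupies the second slot), whereas your identification of them with $a_{y_{\alpha_\gamma}}$ and $a_{z_{\alpha_\gamma}}$ via Theorem~\ref{Thm:Vanish-B}, combined with avoiding the countable support of $(a_w)_w$, handles them cleanly. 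The trade-off is that your proof leans on the heavier Theorems~\ref{Thm:Vanish-B} and~\ref{Thm:Vanish-C} (both proved earlier in the paper, so there is no circularity), while the paper's argument is more elementary and self-contained.
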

\begin{proof}
Fix $\gamma<\omega_1$ and assume that we have obtained subsequences $\{y_{\alpha_\xi}\}_{\xi<\gamma}$ and $\{z_{\alpha_\xi}\}_{\xi<\gamma}$, and also a sequence $\{x_\xi\}_{\xi<\gamma}$, satisfying conditions \ref{it:E1} and \ref{it:E2} for each $\xi<\gamma$.

Choose $\alpha_\gamma$ strictly greater than $\sup\{\alpha_\xi:\xi<\gamma\}$ and let $\rho_1<\omega_1$ be such that
\[
\{y_{\alpha_\xi}:\xi\leq \gamma\}\cup \{z_{\alpha_\xi}:\xi\leq \gamma\}\cup \{x_\xi:\xi<\gamma\}\subset L_{\rho_1}.
\]
According to Proposition \ref{Prop:CountableSupportFrechetMeasures}, we may fix $\rho_2<\omega_1$ such that 
\[
\bigcup_{\xi\leq \gamma}\Bigl(\mathrm{Supp}(\nu_{y_{\alpha_\xi}})\cup \mathrm{Supp}(\nu_{z_{\alpha_\xi}})\Bigr)\subset L_{\rho_2}\times L_{\rho_2}.
\]
It follows that, for any $x\in L\setminus L_{\rho_2}$,
\[
\nu_{y_{\alpha_\xi}}(x,z_{\alpha_\xi})=\nu_{y_{\alpha_\xi}}(z_{\alpha_\xi},x)
=\nu_{z_{\alpha_\xi}}(x,y_{\alpha_\xi})=\nu_{z_{\alpha_\xi}}(y_{\alpha_\xi},x)=0
\]
for each $\xi\leq \gamma$. Moreover, by our hypothesis, there exists $\rho_3<\omega_1$ such that, for every $x\in L\setminus L_{\rho_3}$,
\[
\nu_x(y_{\alpha_\gamma},x)=\nu_x(x,y_{\alpha_\gamma})
=\nu_x(z_{\alpha_\gamma},x)=\nu_x(x,z_{\alpha_\gamma})=0.
\]
We then choose $x_\gamma\in L\setminus (L_{\rho_1}\cup L_{\rho_2}\cup L_{\rho_3})$. By construction, $x_\gamma,y_{\alpha_\gamma},z_{\alpha_\gamma}$ are pairwise distinct, and \ref{it:E1} and \ref{it:E2} are satisfied. This completes the inductive step. 

Hence, by transfinite recursion, the sequences $\{y_{\alpha_\gamma}\}_{\gamma<\omega_1}$, $\{z_{\alpha_\gamma}\}_{\gamma<\omega_1}$, and $\{x_\gamma\}_{\gamma<\omega_1}$ can be constructed as required.
\end{proof}

\begin{theorem}\label{Thm:Vanish-E}
Let $\nu:K\to \mathcal{F}_0(K,K)$ be a weak$^*$-continuous function vanishing at infinity. Suppose that, for each $z\in L$, there exists $\gamma_z<\omega_1$ such that, for all $x,y\in L\setminus L_{\gamma_z}$, we have $\nu_x(y,z)=0$. Let $\{y_\alpha\}_{\alpha<\omega_1}$ and $\{z_\alpha\}_{\alpha<\omega_1}$ be sequences of pairwise distinct points in $L$ such that $y_\alpha\neq z_\alpha$ for each $\alpha<\omega_1$. Then, there exists $\rho<\omega_1$ such that, for each $\alpha\geq \rho$, we have
\[
\nu_{y_\alpha}(y_\alpha,z_\alpha)=\nu_{y_\alpha}(z_\alpha,y_\alpha)=0.
\]
\end{theorem}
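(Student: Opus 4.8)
The plan is to argue by contradiction. Suppose no such $\rho$ exists. Then for unboundedly many $\alpha$ one of $\nu_{y_\alpha}(y_\alpha,z_\alpha)$, $\nu_{y_\alpha}(z_\alpha,y_\alpha)$ is nonzero, and a pigeonhole argument over which of the two coordinate orders fails, together with a rational threshold, produces a fixed $\epsilon>0$ and an uncountable set of indices on which one fixed expression has absolute value at least $\epsilon$. The two cases are symmetric (interchanging the coordinate slots and using the symmetric clauses of \ref{it:E1} and \ref{it:E2}), so after reindexing I may assume $|\nu_{y_\alpha}(y_\alpha,z_\alpha)|\geq\epsilon$ for all $\alpha<\omega_1$.

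First I would apply Lemma~\ref{Lem:SeparationPlus} to $\{y_\alpha\}$ and $\{z_\alpha\}$ to obtain a simultaneous refinement together with a fresh sequence $\{x_\gamma\}$ of pairwise distinct points, so that $x_\gamma,y_{\alpha_\gamma},z_{\alpha_\gamma}$ are pairwise distinct and satisfy \ref{it:E1}--\ref{it:E2}. The purpose of $x_\gamma$ is to provide, for each $\gamma$, a second evaluation point collapsing to the \emph{same} limit as $y_{\alpha_\gamma}$, so that I may compare $\nu_{y_{\alpha_\gamma}}$ with $\nu_{x_\gamma}$ on one common pair of neighborhoods. Next I would invoke Lemma~\ref{Lem:Separation} with evaluation sequence $\{y_{\alpha_\gamma}\}$ and with $(y_{\alpha_\gamma},z_{\alpha_\gamma},x_\gamma)$ in the roles of $(y,z,w)$, obtaining pairwise disjoint finite sets $I_\gamma\ni y_{\alpha_\gamma},z_{\alpha_\gamma},x_\gamma$ together with the tail estimates \ref{it:B3}--\ref{it:B4}. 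Crucially, I would secure the analogous tail estimates for the \emph{second} measure $\nu_{x_\gamma}$ against the \emph{same} family $\{I_\gamma\}$; this is arranged by running the recursions of Lemmas~\ref{Lem:SeparationPlus} and~\ref{Lem:Separation} together, so that at stage $\gamma$, after choosing the three points far enough out, I apply Theorem~\ref{Thm:Regularity} to both $\nu_{y_{\alpha_\gamma}}$ and $\nu_{x_\gamma}$ and take $I_\gamma$ to contain both resulting finite sets, disjoint from the earlier ones and reaching arbitrarily far up the $\varphi$-scale. A further refinement lets me assume $|I_\gamma|=m$ is constant.

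I would then form the collection $\mathcal{C}=\{(y_{\alpha_\gamma},x_\gamma,z_{\alpha_\gamma},\dots):\gamma<\omega_1\}\subset L^m$ listing the elements of $I_\gamma$, with the partition placing $\{y_{\alpha_\gamma},x_\gamma\}$ in the first block, $\{z_{\alpha_\gamma}\}$ in the second, and the remaining points of $I_\gamma$ in the last. By disjointness of the $I_\gamma$ this collection is $(\{1,2\},\{3\},\{4,\dots,m\})$-separated, so Definition~\ref{Def:Collapsing} furnishes an accumulation point along which $y_{\alpha_\gamma}\to a$, $x_\gamma\to a$ and $z_{\alpha_\gamma}\to b$ for distinct $a,b\in L$, while every remaining coordinate tends to $\infty$. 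Fixing disjoint compact clopen neighborhoods $U\ni a$ and $V\ni b$ and passing to a subnet, I may assume $U\cap I_\gamma=\{y_{\alpha_\gamma},x_\gamma\}$ and $V\cap I_\gamma=\{z_{\alpha_\gamma}\}$.

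Finally I would expand $\nu_{y_{\alpha_\gamma}}(U,V)$ and $\nu_{x_\gamma}(U,V)$ over the atoms in $U\times V$. In the first expansion the term $\nu_{y_{\alpha_\gamma}}(x_\gamma,z_{\alpha_\gamma})$ vanishes by \ref{it:E1}, leaving $\nu_{y_{\alpha_\gamma}}(y_{\alpha_\gamma},z_{\alpha_\gamma})$ plus tails bounded through \ref{it:B3}--\ref{it:B4} (the $\{x_\gamma\}$-tail being precisely where condition \ref{it:B4} is used, as anticipated in Remark~\ref{Rem:Equality}). In the second expansion $\nu_{x_\gamma}(y_{\alpha_\gamma},z_{\alpha_\gamma})$ vanishes by Theorem~\ref{Thm:Vanish-C} and $\nu_{x_\gamma}(x_\gamma,z_{\alpha_\gamma})$ vanishes by \ref{it:E2}, leaving only tails controlled by the companion estimates for $\nu_{x_\gamma}$. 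Since $t\mapsto\nu_t(U,V)$ is continuous and both $y_{\alpha_\gamma},x_\gamma\to a$, both expansions converge to $\nu_a(U,V)$, so their difference tends to $0$; but that difference equals $\nu_{y_{\alpha_\gamma}}(y_{\alpha_\gamma},z_{\alpha_\gamma})$ up to the controlled tails, forcing $|\nu_{y_{\alpha_\gamma}}(y_{\alpha_\gamma},z_{\alpha_\gamma})|<\epsilon$ once the total error budget is set below $\epsilon/2$, a contradiction. I expect the main obstacle to be exactly the need to dominate the tail terms for \emph{two} evaluation points, $\nu_{y_{\alpha_\gamma}}$ and $\nu_{x_\gamma}$, against a single family $\{I_\gamma\}$; this is what forces the auxiliary sequence of Lemma~\ref{Lem:SeparationPlus} (so that both evaluation points share the limit $a$, making Theorem~\ref{Thm:Vanish-C} available to annihilate the cross term $\nu_{x_\gamma}(y_{\alpha_\gamma},z_{\alpha_\gamma})$) together with the simultaneous regularity bookkeeping described above.
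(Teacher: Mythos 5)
Your proposal is correct and follows essentially the same route as the paper: Lemma~\ref{Lem:SeparationPlus} to manufacture the auxiliary sequence $\{x_\gamma\}$ sharing the limit of $\{y_{\alpha_\gamma}\}$, Lemma~\ref{Lem:Separation} (with the $w$-slot carrying $x_\gamma$, which is exactly where condition \ref{it:B4} enters) for the tail control, the collapsing property to force $(a,a,b,\infty,\dots,\infty)$ as an accumulation point, and the comparison of $\nu_{y_{\alpha_\gamma}}(U,V)$ with $\nu_{x_\gamma}(U,V)$ in the limit. The only cosmetic difference is bookkeeping: the paper applies Lemma~\ref{Lem:Separation} twice to produce two families $\{I_\alpha\}$, $\{J_\alpha\}$ and merges them with the $\varDelta$-system lemma, whereas you interleave the recursions to build a single family controlling both evaluation points; and you make explicit the appeal to Theorem~\ref{Thm:Vanish-C} for $\nu_{x_\gamma}(y_{\alpha_\gamma},z_{\alpha_\gamma})=0$, which the paper uses tacitly.
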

\begin{proof} If the theorem were false, by taking a simultaneous refinement of the sequences, we could find $\epsilon>0$ such that either $|\nu_{y_\alpha}(y_\alpha,z_\alpha)|\geq \epsilon$ for all $\alpha<\omega_1$, or $|\nu_{y_\alpha}(z_\alpha,y_\alpha)|\geq \epsilon$ for all $\alpha<\omega_1$. We assume, without loss of generality, the former situation; the latter can be treated with a similar argument with only minor adjustments.

By employing Lemma \ref{Lem:SeparationPlus}, we can find another simultaneous refinement, together with a sequence of pairwise distinct points $\{x_\alpha\}_{\alpha<\omega_1}$ such that $x_\alpha, y_\beta, z_\gamma$ are pairwise distinct for all $\alpha, \beta, \gamma < \omega_1$, and
\begin{align*}
\nu_{y_\alpha}(x_\alpha,z_\alpha)&=\nu_{y_\alpha}(z_\alpha,x_\alpha)=\nu_{z_\alpha}(x_\alpha,y_\alpha)=\nu_{z_\alpha}(y_\alpha,x_\alpha)=0,\\
\nu_{x_\alpha}(y_\alpha,x_\alpha)&=\nu_{x_\alpha}(x_\alpha,y_\alpha)=\nu_{x_\alpha}(z_\alpha,x_\alpha)=\nu_{x_\alpha}(x_\alpha,z_\alpha)=0
\end{align*}
for each $\alpha<\omega_1$.  

Next, by applying Lemma \ref{Lem:Separation} two times, we may obtain a simultaneous refinement of the above sequences, along with sequences $\{I_\alpha\}_{\alpha < \omega_1}$ and $\{J_\alpha\}_{\alpha < \omega_1}$ of finite subsets of $L$ with cardinality strictly grater than $3$, satisfying:
\begin{enumerate}[label=(F\alph*)]
\item\label{it:F1}  $\{x_\alpha, y_\alpha,z_\alpha\}\subset I_\alpha\cap J_\alpha$ for all $\alpha<\omega_1$;   
\item\label{it:F2} $I_\alpha\cap I_\beta=\emptyset$ and $J_\alpha\cap J_\beta=\emptyset$ whenever $\alpha\neq\beta$;
\item\label{it:F3} for all $\alpha<\omega_1$:
\[\mathrm{Var}(\nu_{x_{\alpha}})(\{y_{\alpha}\},K\setminus I_\alpha)+\mathrm{Var}(\nu_{x_{\alpha}})(K\setminus I_\alpha,\{z_{\alpha}\}) + \mathrm{Var}(\nu_{x_{\alpha}})(K\setminus I_\alpha,K\setminus I_\alpha) \leq \frac{\epsilon}{6},\]
and
\[\mathrm{Var}(\nu_{y_{\alpha}})(\{y_{\alpha}\},K\setminus J_\alpha)+\mathrm{Var}(\nu_{y_{\alpha}})(K\setminus J_\alpha,\{z_{\alpha}\}) + \mathrm{Var}(\nu_{y_{\alpha}})(K\setminus J_\alpha,K\setminus J_\alpha) \leq \frac{\epsilon}{6};\]
\item\label{it:F4}  for all $\alpha<\omega_1$:
\[\max\left\{\mathrm{Var}(\nu_{x_{\alpha}})(\{x_{\alpha}\},K\setminus I_\alpha), \mathrm{Var}(\nu_{y_{\alpha}})(\{x_{\alpha}\},K\setminus I_\alpha)\right\}<\frac{\epsilon}{6}.\]
\end{enumerate}

Note that we may assume that $\{I_\alpha \cup J_\alpha\}_{\alpha<\omega_1}$ is a sequence of pairwise disjoint sets with $|I_\alpha \cup J_\alpha| = n+3$ (with $n>0$) for every $\alpha<\omega_1$. Indeed, by refining the sequence $\{I_\alpha \cup J_\alpha\}_{\alpha<\omega_1}$ via the $\varDelta$-System Lemma, we may suppose that $\{I_\alpha \cup J_\alpha\}_{\alpha<\omega_1}$ forms a $\varDelta$-system with root $R$, and that all members of the sequence have the same cardinality strictly greater than $3$. However, the Condition \ref{it:F2} forces $R = \emptyset$.

We suppose that $I_\alpha \cup J_\alpha = \{x_\alpha, y_\alpha, z_\alpha, w_1^\alpha, \ldots, w_n^\alpha\}$,
and then form the collection \[\mathcal{C} = \{(x_\alpha, y_\alpha, z_\alpha, w_1^\alpha, \ldots, w_n^\alpha) : \alpha < \omega_1\} \subset L^{\,n+3},\]
noting that it constitutes an uncountable $(\{1,2\}, \{3\},\{4, \ldots, n+2\})$-separated subset of $L^{\,n+3}$.

Since $L$ is a collapsing space, see Definition \ref{Def:Collapsing}, there exist distinct points $a,b \in L$ with $a \neq b$ such that $\ell = (a,a,b,\infty, \ldots, \infty)$ is an accumulation point of $\mathcal{C}$. We let $\{(x_{\alpha_\gamma}, y_{\alpha_\gamma}, z_{\alpha_\gamma}, w_1^{\alpha_\gamma}, \ldots, w_n^{\alpha_\gamma})\}_{\gamma \in \Gamma}$ be a net in $\mathcal{C}$ converging to the point $\ell$.

Next, we choose disjoint clopen neighborhoods $U$ of $a$ and $V$ of $b$, and, by passing to a subnet if necessary, we may assume that 
\[
U \cap I_{\alpha_\gamma} = U \cap J_{\alpha_\gamma} = \{x_{\alpha_\gamma}, y_{\alpha_\gamma}\}
\quad\text{and}\quad
V \cap I_{\alpha_\gamma} = V \cap J_{\alpha_\gamma} = \{z_{\alpha_\gamma}\}
\]
for each $\gamma \in \Gamma$.

From the Condition \ref{it:F4}, we have for each $\gamma\in \varGamma$:
\begin{align*}
\bigl|\nu_{x_{\alpha_\gamma}}(U \setminus \{y_{\alpha_\gamma}\},\, V \setminus \{z_{\alpha_\gamma}\})\bigr|
&\leq \mathrm{Var}(\nu_{x_{\alpha_\gamma}})(U \setminus \{y_{\alpha_\gamma}\},\, V \setminus \{z_{\alpha_\gamma}\}) \\[0.3em]
&\leq \mathrm{Var}(\nu_{x_{\alpha_\gamma}})\bigl((K \setminus I_{\alpha_\gamma}) \cup \{x_{\alpha_\gamma}\},\, K \setminus I_{\alpha_\gamma}\bigr) \\[0.3em]
&\leq \mathrm{Var}(\nu_{x_{\alpha_\gamma}})(K \setminus I_{\alpha_\gamma},\, K \setminus I_{\alpha_\gamma}) 
   + \mathrm{Var}(\nu_{x_{\alpha_\gamma}})(\{x_{\alpha_\gamma}\},\, K \setminus I_{\alpha_\gamma}) \\[0.3em]
&\leq \mathrm{Var}(\nu_{x_{\alpha_\gamma}})(K \setminus I_{\alpha_\gamma},\, K \setminus I_{\alpha_\gamma}) + \frac{\epsilon}{6}.
\end{align*}
And since $\nu_{x_{\alpha_{\gamma}}}(x_{\alpha_{\gamma}},z_{\alpha_{\gamma}})=0$ we have:
\begin{align*}\mathrm{Var}(\nu_{x_{\alpha_\gamma}})(U\setminus \{y_{\alpha_\gamma}\},\{z_{\alpha_\gamma}\})&\leq \mathrm{Var}(\nu_{x_{\alpha_\gamma}})((K\setminus I_{\alpha_\gamma})\cup \{x_{\alpha_\gamma}\},\{z_{\alpha_\gamma}\})\\
&\leq \mathrm{Var}(\nu_{x_{\alpha_\gamma}})(K\setminus I_{\alpha_\gamma},\{z_{\alpha_\gamma}\})+|\nu_{x_{\alpha_\gamma}}(x_{\alpha_\gamma},z_{\alpha_\gamma})|\\
&=\mathrm{Var}(\nu_{x_{\alpha_\gamma}})(K\setminus I_{\alpha_\gamma}, \{z_{\alpha_\gamma}\})
\end{align*}

Therefore, recalling that $\nu_{x_{\alpha_\gamma}}(y_{\alpha_\gamma},z_{\alpha_\gamma})=0$ and Condition \ref{it:F3}, we can write
\begin{align*} 
|\nu_{x_{\alpha_\gamma}}(U,V)|
&\leq |\nu_{x_{\alpha_\gamma}}(y_{\alpha_\gamma},z_{\alpha_\gamma})|
   + \bigl|\nu_{x_{\alpha_\gamma}}(\{y_{\alpha_\gamma}\},\, V \setminus \{z_{\alpha_\gamma}\})\bigr| \\
&\quad + \bigl|\nu_{x_{\alpha_\gamma}}(U \setminus \{y_{\alpha_\gamma}\},\, \{z_{\alpha_\gamma}\})\bigr|
   + \bigl|\nu_{x_{\alpha_\gamma}}(U \setminus \{y_{\alpha_\gamma}\},\, V \setminus \{z_{\alpha_\gamma}\})\bigr| \\[0.5em]
&\leq \mathrm{Var}(\nu_{x_{\alpha_\gamma}})(\{y_{\alpha_\gamma}\},\, V \setminus \{z_{\alpha_\gamma}\}) \\
&\quad + \mathrm{Var}(\nu_{x_{\alpha_\gamma}})(U \setminus \{y_{\alpha_\gamma}\},\, \{z_{\alpha_\gamma}\})
   + \mathrm{Var}(\nu_{x_{\alpha_\gamma}})(U \setminus \{y_{\alpha_\gamma}\},\, V \setminus \{z_{\alpha_\gamma}\}) \\[0.5em]
&\leq \mathrm{Var}(\nu_{x_{\alpha_\gamma}})(\{y_{\alpha_\gamma}\},\, K \setminus I_{\alpha_\gamma}) \\
&\quad + \mathrm{Var}(\nu_{x_{\alpha_\gamma}})(K \setminus I_{\alpha_\gamma},\, \{z_{\alpha_\gamma}\}) + \mathrm{Var}(\nu_{x_{\alpha_\gamma}})(K \setminus I_{\alpha_\gamma},\, K \setminus I_{\alpha_\gamma})
   + \tfrac{\epsilon}{6} \\[0.5em]
&< \tfrac{\epsilon}{3}.
\end{align*}

On the other hand, by Condition \ref{it:F4}, we have: 
\begin{align*}
|\nu_{y_{\alpha_{\gamma}}}(U\setminus \{y_{\alpha_{\gamma}}\},V\setminus \{z_{\alpha_{\gamma}}\})|&\leq \mathrm{Var}(\nu_{y_{\alpha_{\gamma}}})(U\setminus \{y_{\alpha_{\gamma}}\},V\setminus \{z_{\alpha_{\gamma}}\})\\
&\leq \mathrm{Var}(\nu_{y_{\alpha_{\gamma}}})((K\setminus J_{\alpha_{\gamma}}) \cup\{x_{\alpha_\gamma}\},K\setminus J_{\alpha_{\gamma}})\\
&\leq  \mathrm{Var}(\nu_{y_{\alpha_{\gamma}}})(K\setminus J_{\alpha_{\gamma}},K\setminus J_{\alpha_{\gamma}})+ \mathrm{Var}(\nu_{y_{\alpha_{\gamma}}})(\{x_{\alpha_\gamma}\},K\setminus J_{\alpha_{\gamma}})\\
&\leq\mathrm{Var}(\nu_{y_{\alpha_{\gamma}}})(K\setminus J_{\alpha_{\gamma}},K\setminus J_{\alpha_{\gamma}})+\frac{\epsilon}{6} 
\end{align*}
and recalling that  $\nu_{y_{\alpha_\gamma}}(x_{\alpha_\gamma},z_{\alpha_\gamma})=0$, we can write:
\begin{align*}\mathrm{Var}(\nu_{y_{\alpha_\gamma}})((U\setminus \{y_{\alpha_\gamma}\},\{z_{\alpha_\gamma}\})&\leq \mathrm{Var}(\nu_{y_{\alpha_\gamma}})((K\setminus J_{\alpha_\gamma})\cup \{x_{\alpha_\gamma}\},\{z_{\alpha_\gamma}\})\\
&\leq \mathrm{Var}(\nu_{y_{\alpha_\gamma}})(K\setminus J_{\alpha_\gamma},\{z_{\alpha_\gamma}\})+|\nu_{y_{\alpha_\gamma}}(x_{\alpha_\gamma},z_{\alpha_\gamma})|\\
&= \mathrm{Var}(\nu_{y_{\alpha}})(K\setminus J_{\alpha_\gamma},\{z_{\alpha_\gamma}\})
\end{align*}

Hence, since $|\nu_{y_{\alpha_\gamma}}(y_{\alpha_\gamma},z_{\alpha_\gamma})|\geq \epsilon$, the relations above and Condition \ref{it:F3} implies
\begin{align*} 
|\nu_{y_{\alpha_{\gamma}}}(U,V)|
&\geq |\nu_{y_{\alpha_{\gamma}}}(y_{\alpha_{\gamma}},z_{\alpha_{\gamma}})|
  - \Bigl(
    \bigl|\nu_{y_{\alpha_{\gamma}}}(\{y_{\alpha_{\gamma}}\},\,V\setminus\{z_{\alpha_{\gamma}}\})\bigr|
    + \bigl|\nu_{y_{\alpha_{\gamma}}}(U\setminus\{y_{\alpha_{\gamma}}\},\,\{z_{\alpha_{\gamma}}\})\bigr| \\
&\qquad\qquad\qquad\qquad\qquad\qquad\qquad
    + \bigl|\nu_{y_{\alpha_{\gamma}}}(U\setminus\{y_{\alpha_{\gamma}}\},\,V\setminus\{z_{\alpha_{\gamma}}\})\bigr|
  \Bigr)\\[0.4em]
&\geq \epsilon
  - \Bigl(
    \mathrm{Var}(\nu_{y_{\alpha_{\gamma}}})(\{y_{\alpha_{\gamma}}\},\,V\setminus\{z_{\alpha_{\gamma}}\})
    + \mathrm{Var}(\nu_{y_{\alpha_{\gamma}}})(U\setminus\{y_{\alpha_{\gamma}}\},\,\{z_{\alpha_{\gamma}}\})\\
&\qquad\qquad\qquad\qquad\qquad\qquad\qquad
    + \mathrm{Var}(\nu_{y_{\alpha_{\gamma}}})(U\setminus\{y_{\alpha_{\gamma}}\},\,V\setminus\{z_{\alpha_{\gamma}}\})
  \Bigr)\\[0.4em]
&\geq \epsilon
  - \Bigl(
    \mathrm{Var}(\nu_{y_{\alpha_{\gamma}}})(\{y_{\alpha_{\gamma}}\},\,K\setminus J_{\alpha_{\gamma}})
    + \mathrm{Var}(\nu_{y_{\alpha_{\gamma}}})(K\setminus J_{\alpha_{\gamma}},\,\{z_{\alpha_{\gamma}}\})\\
&\qquad\qquad\qquad\qquad\qquad\qquad\qquad
    + \mathrm{Var}(\nu_{y_{\alpha_{\gamma}}})(K\setminus J_{\alpha_{\gamma}},\,K\setminus J_{\alpha_{\gamma}})
    + \tfrac{\epsilon}{6}
  \Bigr)\\[0.4em]
&> \epsilon - \tfrac{\epsilon}{3} = \tfrac{2\epsilon}{3}.
\end{align*}
\normalsize

Now we notice that both nets $\{x_{\alpha_\gamma}\}_{\gamma\in \varGamma}$ and $\{y_{\alpha_\gamma}\}_{\gamma\in \varGamma}$ converge to $a$. Since  $\nu$ is weak$^*$-continuous and that $U$ and $V$ are clopen compact subsets of $L$, the preceding inequalities yield
\[
|\nu_a(U,V)| = \lim_{\gamma\to\infty}|\nu_{x_{\alpha_\gamma}}(U,V)| \leq \tfrac{\epsilon}{3}
  < \tfrac{2\epsilon}{3} \leq \lim_{\gamma\to\infty}|\nu_{y_{\alpha_\gamma}}(U,V)| = |\nu_a(U,V)|,
\]
which is a contradiction.

\end{proof}

\section{Proof of Main Theorem}
\label{Sec:Final}

We are now ready to establish the main result of this work. Our argument relies on the principal theorems from Section~\ref{Sec:Decomposition}, which are gradually employed in the process of dismantling a bilinear operator $G:C_0(L)\times C_0(L)\to C_0(L)$ until only its most trivial components remain. As in the previous section, $L$ denotes a collapsing space (see Definition~\ref{Def:Collapsing}), and $K = L \cup \{\infty\}$ stands for its one-point compactification. For any bilinear operator $G:C_0(L)\times C_0(L)\to C_0(L)$, we denote by $\hat{G}: K \to \mathcal{F}_0(K,K)$ its representing function as in Theorem~\ref{Thm:Representation}, that is, the function satisfying
\[
G(f,g)(t) = \sum_{x\in K} \left( \sum_{y \in K} \hat{G}_t(x,y)\, g(y) \right) f(x).
\]
\begin{proof}[Proof of Theorem \ref{Thm:Main}]
Let $G:C_0(L)\times C_0(L)\to C_0(L)$ be a bilinear operator. According to Theorem \ref{Thm:Vanish-B}, there exist $(a_z)_{z\in L},(b_z)_{z\in L}\in \ell_1(L)$ such that, for each $z\in L$, there exists $\gamma_z<\omega_1$  such that, for all $x,y\in L\setminus L_{\gamma_z}$,
\[
\hat{G}_{x}(z,y)=
\begin{cases}
a_z, & \text{if } x = y, \\
0,   & \text{if } x \neq y,
\end{cases}
\qquad
\hat{G}_{x}(y,z)=
\begin{cases}
b_z, & \text{if } x = y, \\
0,   & \text{if } x \neq y.
\end{cases}
\]

Define the cross-interaction operator $H:C_0(L)\times C_0(L)\to C_0(L)$ by
\[
H(f,g)(t)=\sum_{w\in L} \big(a_w f(t)g(w) + b_w f(w)g(t)\big),
\]
which is well-defined since $(a_w)_{w\in L},(b_w)_{w\in L}\in \ell_1(L)$. Set $F=G-H$.

Fix $z\in L$, and let $\gamma_z$ be as above, increasing it if necessary so that $z\in L_{\gamma_z}$. For $x,y\in L\setminus L_{\gamma_z}$ we have
\begin{align*}
\hat{F}_x(z,y) &= \hat{G}_x(z,y) - \hat{H}_x(z,y) = \hat{G}_x(z,y) - a_z \chi_y(x),\\
\hat{F}_x(y,z) &= \hat{G}_x(y,z) - \hat{H}_x(y,z) = \hat{G}_x(y,z) - b_z \chi_y(x),
\end{align*}         
and thus $\hat{F}_x(z,y) = \hat{F}_x(y,z) = 0$. 

By Theorem \ref{Thm:Vanish-D}, there exist $r\in \mathbb{R}$ and $\rho<\omega_1$ such that $\hat{F}_y(y,y)=r$ for all $y \in L\setminus L_\rho$. Define the scaled multiplication operator $M:C_0(L)\times C_0(L)\to C_0(L)$ by $M(f,g)(t)=r\, f(t)g(t)$, and set $S=G-H-M$.

Towards a contradiction, assume that $S$ does not have separable image. Then, for each $\alpha<\omega_1$ there exist $f_\alpha,g_\alpha\in C_0(L)$ such that $S(f_\alpha,g_\alpha)\notin C_0(L_\alpha)$. Hence, there exists $x_\alpha\in L\setminus L_\alpha$ with $S(f_\alpha,g_\alpha)(x_\alpha)\neq 0$. We form the sequence $\{x_\alpha\}_{\alpha<\omega_1}$ and assume, after passing to a suitable refinement, that $\{x_\alpha\}_{\alpha<\omega_1}$ is a sequence of pairwise distinct points.

For each $\alpha<\omega_1$, we have
\[
S(f_\alpha,g_\alpha)(x_\alpha)=\sum_{y\in K}\Big(\sum_{z\in K}\hat{S}_{x_\alpha}(y,z)g_\alpha(z)\Big)f_\alpha(y)\neq 0,
\]
so we may fix $y_\alpha,z_\alpha \in L$ with $\hat{S}_{x_\alpha}(y_\alpha,z_\alpha)\neq 0$ and form the sequences $\{y_\alpha\}_{\alpha<\omega_1}$ and $\{z_\alpha\}_{\alpha<\omega_1}$.

Suppose that $z_\alpha=z$ for uncountably many $\alpha$. After a simultaneous refinement, we would have $\hat{S}_{x_\alpha}(y_\alpha,z)\neq 0$ for each $\alpha<\omega_1$. If $\{y_\alpha : \alpha<\omega_1\}$ is countable, this contradicts Theorem~\ref{Thm:Vanish-A}. If uncountable, there exists $y_\alpha\in L\setminus L_{\gamma_z}$, with $\gamma_z<\omega_1$ such that $\hat{F}_x(y,z)=\hat{F}_x(z,y)=0$ for all $x,y\in L\setminus L_{\gamma_z}$. Since $\hat{M}_x(y,z)\neq 0$ if and only is $x=y=z$, we obtain
\[
\hat{S}_{x_\alpha}(y_\alpha,z)=\hat{F}_{x_\alpha}(y_\alpha,z)-\hat{M}_{x_\alpha}(y_\alpha,z)=0,
\]
a contradiction. By the same reasoning, there is no refinement with $y_\alpha=y$ for uncountably many $\alpha$.

Now we claim that there exists a simultaneous refinement such that $\{x_\alpha\}_{\alpha<\omega_1}$, $\{y_\alpha\}_{\alpha<\omega_1}$, and $\{z_\alpha\}_{\alpha<\omega_1}$ are sequences of pairwise distinct points. Indeed, proceeding by transfinite induction on $\gamma<\omega_1$, assume that sequences $\{x_{\alpha_\xi}\}_{\xi<\gamma}$, $\{y_{\alpha_\xi}\}_{\xi<\gamma}$, $\{z_{\alpha_\xi}\}_{\xi<\gamma}$ of pairwise distinct points are obtained. Let
\[
\Lambda_y=\bigcup_{\xi<\gamma}\{\alpha<\omega_1:y_\alpha=y_{\alpha_\xi}\}, \quad 
\Lambda_z=\bigcup_{\xi<\gamma}\{\alpha<\omega_1:z_\alpha=z_{\alpha_\xi}\}.
\]
These are countable according to the discussion above, so fix $\alpha_\gamma<\omega_1$ strictly greater than $\sup(\Lambda_y \cup \Lambda_z \cup \{\alpha_\xi : \xi<\gamma\})$. This completes the inductive step, which establishes our claim.

Next, assuming that $\{x_\alpha\}_{\alpha<\omega_1}$, $\{y_\alpha\}_{\alpha<\omega_1}$, and $\{z_\alpha\}_{\alpha<\omega_1}$ are sequences of pairwise distinct points., if $x_\alpha\notin\{y_\alpha,z_\alpha\}$ for uncountably many $\alpha$, we contradict Theorem~\ref{Thm:Vanish-C}. Hence, after another simultaneous refinement, either $x_\alpha=y_\alpha$ for all $\alpha<\omega_1$ or $x_\alpha=z_\alpha$ for all $\alpha<\omega_1$. Without loss of generality assume the former. If $z_\alpha\neq y_\alpha$ for uncountably many $\alpha$, we contradict Theorem~\ref{Thm:Vanish-E}. Therefore, after a final simultaneous refinement, we may assume $x_\alpha=y_\alpha=z_\alpha$ for all $\alpha$. But then, for $y_\alpha\in L\setminus L_\rho$ (with $\rho$ fixed above), 
\[
\hat{S}_{y_\alpha}(y_\alpha,y_\alpha)=\hat{F}_{y_\alpha}(y_\alpha,y_\alpha)-\hat{M}_{y_\alpha}(y_\alpha,y_\alpha)=r-r=0,
\]
which is a contradiction. Thus, $S$ has separable image.

We now establish the uniqueness of the decomposition. Suppose there exist $r'\in\mathbb{R}$, $(a'_w)_{w\in L},(b'_w)_{w\in L}\in\ell_1(L)$, and a bilinear operator $S':C_0(L)\times C_0(L)\to C_0(L)$ with separable image such that
\[
G(f,g)(t)=r'f(t)g(t)+\sum_{w\in L}\big(a'_w f(t)g(w)+b'_w f(w)g(t)\big)+S'(f,g)(t),
\]
giving another representation of $G$. By Proposition \ref{Prop:SeparableImage}, there exists $\eta<\omega_1$ such that the images of $S$ and $S'$ lie in $C_0(L_\eta)$. For every $s\in L\setminus L_\eta$, the bilinear form $G_s:C_0(L)\times C_0(L)\to \mathbb{R}$, defined by $G_s(f,g)=G(f,g)(s)$, admits the following decompositions:
\begin{align*}
G(f,g)(s)&=r\, f(s)g(s)+f(s)\Big(\sum_{w\in L} a_w g(w)\Big)
         + g(s)\Big(\sum_{w\in L} b_w f(w)\Big)\\
         &=r^{\prime}\, f(s)g(s)+f(s)\Big(\sum_{w\in L} a^\prime_w g(w)\Big)
         + g(s)\Big(\sum_{w\in L} b^\prime_w f(w)\Big),
\end{align*}
that can be extended to $\ell_\infty(L)\times \ell_\infty(L)$, as shown in Theorem \ref{Thm:RieszBilinearScattered}. Now let $t\in L$ be arbitrary and choose $s\in L\setminus L_\eta$, with $s\neq t$. By computing $G(\chi_t,\chi_s)(s)$ and $G(\chi_s,\chi_t)(s)$ in both representations, we obtain $a_t=a^\prime_t$ and $b_t=b^\prime_t$. Hence, $(a_w)_{w\in L}=(a^\prime_w)_{w\in L}$ and $(b_w)_{w\in L}=(b^\prime_w)_{w\in L}$. Finally, computing 
$G(\chi_s,\chi_s)(s)$ for some $s\in L\setminus L_\eta$ in both representations gives $r=r^\prime$. This allows us to conclude that $S=S^\prime$, which completes the proof of the theorem.
\end{proof}

\section{Acknowledgements}

This research was supported by the Fundação de Amparo à Pesquisa do Estado de São Paulo (FAPESP), grant no.~2023/12916-1.

\bibliographystyle{amsalpha}

\end{document}